\documentclass[11pt,a4paper]{article}
\usepackage[left=2cm,right=2cm,top=2cm,bottom=2cm]{geometry}
\usepackage{graphicx}
\usepackage{multirow}
\usepackage{amsmath,amssymb,amsfonts}
\usepackage{amsthm}
\usepackage{mathrsfs}
\usepackage[title]{appendix}
\usepackage{xcolor}
\usepackage{textcomp}
\usepackage{manyfoot}
\usepackage{booktabs}
\usepackage{algorithm}
\usepackage{algorithmicx}
\usepackage{algpseudocode}
\usepackage{listings}
\usepackage{bm}
\usepackage{hyperref}
\ifdefined\debug
  \usepackage[inline]{showlabels}
  \showlabels{ref}
  \showlabels{cite}
  \usepackage{fancyhdr}
\fi

\theoremstyle{plain}
\newtheorem{theorem}{Theorem}[section]
\newtheorem{proposition}[theorem]{Proposition}%
\newtheorem{lemma}[theorem]{Lemma}
\newtheorem{corollary}[theorem]{Corollary}

\theoremstyle{definition}
\newtheorem{definition}[theorem]{Definition}
\newtheorem{remark}[theorem]{Remark}

\newtheorem{convention}[theorem]{Convention}
\theoremstyle{plain}
\newtheorem{maintheorem}{Theorem}
\ifdefined\debug
  \newcommand{\debugtext}[1]{\textcolor{red}{#1}}
\else
  \newcommand{\debugtext}[1]{\ignorespaces}
\fi

\counterwithin{figure}{section}
\counterwithin{table}{section}

\numberwithin{equation}{section}

\begin{document}

\title{\textbf{
Infinitely Many Attracting Periodic Circles in Higher Dimensions
}}

\author{Shuntaro Tomizawa
\footnote{
tomizawa.shuntaro@gmail.com, Graduate School of Mathematical Sciences, The University of Tokyo, 3-8-1 Komaba, Meguro, Tokyo, 153-8914, Japan
}}

\maketitle

\noindent \textbf{Abstract.}
We study $C^r$ ($5 \le r \le \infty$) diffeomorphisms on closed manifolds of dimension at least three with a heteroclinic cycle between two hyperbolic periodic points. At each point, the unstable direction is one dimensional, and the stable and unstable eigenvalues closest to $1$ in modulus are real and simple. One heteroclinic connection is transverse and the other is non-transverse, and the product of those two eigenvalues is less than $1$ at one point and greater than $1$ at the other. Arbitrarily close to such a map, there are open sets in which a residual subset of diffeomorphisms has infinitely many attracting normally hyperbolic periodic circles. The proof uses a rescaling to the standard H\'enon map and a corrected formula for the Lyapunov coefficient on its Neimark-Sacker (Andronov-Hopf) line.

\medskip

\noindent \textbf{Keywords.}
heteroclinic cycle, heteroclinic tangency, homoclinic tangency, standard H\'enon map, Neimark-Sacker (Andronov-Hopf) bifurcation, Newhouse phenomenon.

\medskip

\noindent \textbf{AMS subject classification.}
37G25, 37C29, 37G30, 37G35

\tableofcontents

%%%%%%%%%%%%%%%%%%%%%%%%%%% body start %%%%%%%%%%%%%%%%%%%%%%%%%%%%%%

\section{Introduction}
\label{sec-intro}

Homoclinic and heteroclinic tangencies are standard sources of persistent nonhyperbolic dynamics. They generate both Newhouse phenomena and invariant circle bifurcations near tangencies \cite{Newhouse1974,GG2000,GG2004,GSS2002,GSS2006,Tatjer2001,GST2008}. In this paper we study a higher-dimensional heteroclinic cycle in Figure~\ref{fig-DEcycle}. Our main result shows that every diffeomorphism with such a cycle belongs to the closure of an open set in which diffeomorphisms with infinitely many attracting normally hyperbolic periodic circles form a residual subset. The proof combines a two-parameter unfolding, a Newhouse domain argument, and a rescaling to the standard H\'enon map based on the corrected Lyapunov coefficient.

Let $r \in \mathbb{Z}_{>0} \sqcup \{\infty\}$, and let $f$ be a $C^r$ diffeomorphism of a closed $C^r$ manifold $M_\mathrm{ph}$ with a Riemannian metric. For $n \in \mathbb{Z}$, set
\[
\mathbb{Z}_{> n} := \{n + 1, n + 2, \dots\},
\qquad
\mathbb{Z}_{\ge n} := \{n, n + 1, \dots\}.
\]
For a hyperbolic periodic point $O$, define
\begin{align*}
W^s(O) &:= \{P \in M_\mathrm{ph} \mid \mathrm{dist}(f^k(P), f^k(O)) \xrightarrow{k \to \infty} 0\}, \\
W^u(O) &:= \{P \in M_\mathrm{ph} \mid \mathrm{dist}(f^{-k}(P), f^{-k}(O)) \xrightarrow{k \to \infty} 0\}, \\
W^s(\mathrm{Orb}(O)) &:= \bigsqcup_{k = 0}^{\mathrm{per}(O) - 1} W^s(f^k(O)), \\
W^u(\mathrm{Orb}(O)) &:= \bigsqcup_{k = 0}^{\mathrm{per}(O) - 1} W^u(f^k(O)).
\end{align*}
Here $\mathrm{dist}$ is the Riemannian distance, $\mathrm{Orb}(P) := \{f^k(P) \mid k \in \mathbb{Z}\}$ is the orbit of a point $P$, and $\mathrm{per}(P)$ is the period of a periodic point $P$. The $u$-index of $O$ is $\dim W^u(O)$.

\begin{definition}
Let $O_1^*$ and $O_2^*$ be distinct hyperbolic periodic points of $f$ with the same $u$-index. Assume that
\[
W^u(\mathrm{Orb}(O_1^*)) \cap W^s(\mathrm{Orb}(O_2^*)) \neq \emptyset,
\qquad
W^u(\mathrm{Orb}(O_2^*)) \cap W^s(\mathrm{Orb}(O_1^*)) \neq \emptyset.
\]
Choose
\[
M_{1 \to 2}^* \in W^u(\mathrm{Orb}(O_1^*)) \cap W^s(\mathrm{Orb}(O_2^*)),
\qquad
M_{2 \to 1}^* \in W^u(\mathrm{Orb}(O_2^*)) \cap W^s(\mathrm{Orb}(O_1^*)),
\]
and set
\[
\Gamma_{1 \to 2}^* := \mathrm{Orb}(M_{1 \to 2}^*),
\qquad
\Gamma_{2 \to 1}^* := \mathrm{Orb}(M_{2 \to 1}^*).
\]
Then
\[
\Gamma^* := \Gamma_{1 \to 2}^* \sqcup \Gamma_{2 \to 1}^* \sqcup \mathrm{Orb}(O_1^*) \sqcup \mathrm{Orb}(O_2^*)
\]
is the \emph{heteroclinic cycle} associated with $\mathrm{Orb}(O_1^*)$ and $\mathrm{Orb}(O_2^*)$.
\end{definition}

\begin{definition}
Let $O$ be a hyperbolic periodic point. Its \emph{multipliers} are the eigenvalues of $Df^{\mathrm{per}(O)}(O)$. The \emph{stable leading multipliers} are the stable multipliers whose moduli are closest to $1$. The \emph{unstable leading multipliers} are defined similarly. Together they are the \emph{leading multipliers} of $O$.
\end{definition}

\begin{definition}
Assume that $\Gamma^*$ is the heteroclinic cycle associated with $\mathrm{Orb}(O_1^*)$ and $\mathrm{Orb}(O_2^*)$.
We say that $\Gamma^*$ is of \emph{type two bi-saddles} if, for each $\ell \in \{1,2\}$, the point $O_\ell^*$ has a real simple stable leading multiplier $\lambda_\ell^*$ and a real simple unstable leading multiplier $\gamma_\ell^*$. In this case set
\[
\sigma_\ell^* := |\lambda_\ell^* \gamma_\ell^*|
\qquad
(\ell \in \{1,2\}).
\]
We say that $\Gamma^*$ is \emph{centrally dissipative-expanding} if it is of type two bi-saddles and
\[
\sigma_1^* < 1 < \sigma_2^*
\qquad
\text{or}
\qquad
\sigma_2^* < 1 < \sigma_1^*.
\]
We say that $\Gamma^*$ has \emph{one-dimensional unstable directions} if it is of type two bi-saddles and the $u$-index of each $O_\ell^*$ is $1$.
\end{definition}

\begin{definition}
For $\imath,\jmath \in \{1,2\}$ with $\imath \neq \jmath$, a \emph{heteroclinic connection} from $\mathrm{Orb}(O_\imath^*)$ to $\mathrm{Orb}(O_\jmath^*)$ is an orbit
\[
\Gamma = \mathrm{Orb}(M^*), \qquad M^* \in W^u(\mathrm{Orb}(O_\imath^*)) \cap W^s(\mathrm{Orb}(O_\jmath^*)).
\]
It is \emph{transverse} if for some, equivalently every, $M \in \Gamma$,
\[
T_M W^u(\mathrm{Orb}(O_\imath^*)) + T_M W^s(\mathrm{Orb}(O_\jmath^*)) = T_M M_\mathrm{ph}.
\]
Otherwise it is \emph{non-transverse}. We say that $\Gamma^*$ is \emph{transversal and non-transversal} if one of its two heteroclinic connections is transverse and the other is non-transverse.
\end{definition}

\begin{convention}
In this case, we relabel if necessary so that the connection from $\mathrm{Orb}(O_1^*)$ to $\mathrm{Orb}(O_2^*)$ is transverse at $M_{1 \to 2}^*$ and the connection from $\mathrm{Orb}(O_2^*)$ to $\mathrm{Orb}(O_1^*)$ is non-transverse at $M_{2 \to 1}^*$. We use this convention throughout.
\end{convention}

In this paper we study centrally dissipative-expanding transversal and non-transversal heteroclinic cycles of type two bi-saddles with one-dimensional unstable directions; see Figure~\ref{fig-DEcycle}.

\begin{figure}[h]
\centering
\includegraphics{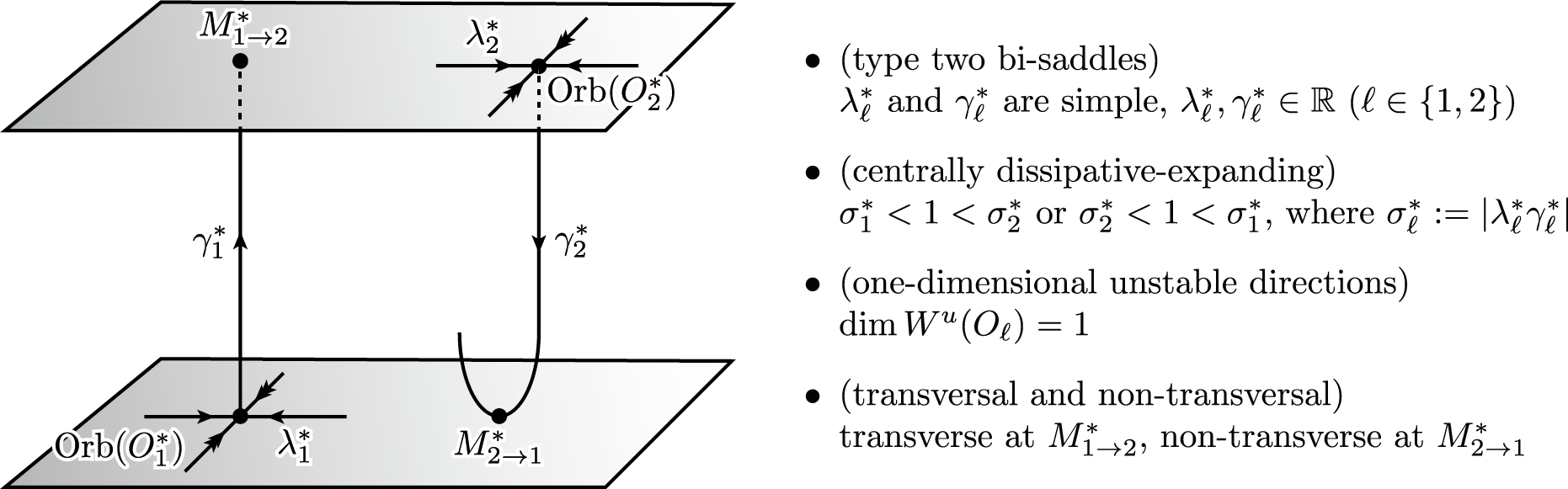}
\caption{The heteroclinic cycle considered in this paper.}
\label{fig-DEcycle}
\end{figure}
% \begin{itemize}
% \item (type two bi-saddles) \\ $\lambda_\ell^*$ and $\gamma_\ell^*$ are simple, $\lambda_\ell^*, \gamma_\ell^* \in \mathbb{R}$ ($\ell \in \{1, 2\}$)
% \item (centrally dissipative-expanding) \\ $\sigma_1^* < 1 < \sigma_2^*$ or $\sigma_2^* < 1 < \sigma_1^*$, where $\sigma_\ell^* := |\lambda_\ell^* \gamma_\ell^*|$
% \item (one-dimensional unstable directions) \\ $\dim W^u(O_\ell) = 1$
% \item (transversal and non-transversal) \\ transverse at $M_{1 \to 2}^*$, non-transverse at $M_{2 \to 1}^*$
% \end{itemize}

We write $\mathrm{Diff}^r(M_\mathrm{ph})$ for the space of $C^r$ diffeomorphisms of $M_\mathrm{ph}$ endowed with the $C^r$ topology.

\begin{definition}
Let $s \in \mathbb{Z}_{>0} \sqcup \{\infty\}$ satisfy $s \le r$, and let $g \in \mathrm{Diff}^r(M_\mathrm{ph})$. A \emph{$C^{s}$ circle} in $M_\mathrm{ph}$ is the image of a $C^{s}$ embedding $\mathbb{S}^1 \longrightarrow M_\mathrm{ph}$, where $\mathbb{S}^1 := \mathbb{R}/\mathbb{Z}$. A $C^{s}$ circle $C$ is a \emph{periodic circle of period $\tau \in \mathbb{Z}_{> 0}$} for $g$ if
\[
g^\tau(C) = C,
\qquad
g^k(C) \cap C = \emptyset
\quad
\text{for every } k \in \{1,2,\dots,\tau - 1\}.
\]
We call $C$ \emph{attracting} if there exists a neighborhood $V$ of $C$ such that
\[
\mathrm{dist}(g^{k\tau}(P), C) \xrightarrow{k \to \infty} 0
\qquad
\text{for every } P \in V.
\]
We call $C$ \emph{normally hyperbolic} if it is immediately relatively $1$-normally hyperbolic for $g^\tau$ in the sense of \cite[Section~1, Definition~1]{HPS1977}.
\end{definition}

\begin{maintheorem}
\label{thm-main}
Let $r \in \mathbb{Z}_{\ge 5} \sqcup \{\infty\}$, let $M_\mathrm{ph}$ be a closed $C^r$ manifold with $\dim(M_\mathrm{ph}) \ge 3$, and let $f \in \mathrm{Diff}^r(M_\mathrm{ph})$ have a centrally dissipative-expanding transversal and non-transversal heteroclinic cycle of type two bi-saddles with one-dimensional unstable directions. Then there exists an open set $\mathcal{U} \subset \mathrm{Diff}^r(M_\mathrm{ph})$ with $f \in \overline{\mathcal{U}}$ such that:
\begin{itemize}
\item if $r < \infty$, then there exists a residual set $\mathcal{R} \subset \mathcal{U}$ such that, for every $g \in \mathcal{R}$ and every $k \in \mathbb{Z}_{>0}$, the diffeomorphism $g$ has an attracting normally hyperbolic periodic $C^r$ circle of period at least $k$;
\item if $r = \infty$, then for every sequence $\{s_k\}_{k=1}^\infty \subset \mathbb{Z}_{\ge 5}$ with $s_k \xrightarrow{k \to \infty} \infty$, there exists a residual set $\mathcal{R}_{\{s_k\}} \subset \mathcal{U}$ such that, for every $g \in \mathcal{R}_{\{s_k\}}$ and every $k \in \mathbb{Z}_{>0}$, the diffeomorphism $g$ has an attracting normally hyperbolic periodic $C^{s_k}$ circle of period at least $k$.
\end{itemize}
\end{maintheorem}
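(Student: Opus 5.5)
We prove Theorem~\ref{thm-main} in three stages: an unfolding with two parameters, a first-return map that rescales to the standard H\'enon map, and a Newhouse-type Baire argument. The construction is local near $\Gamma^*$.

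\textbf{Stage 1: normal form and unfolding.} First perturb $f$ slightly (this is allowed, since we only need $f \in \overline{\mathcal{U}}$). After the perturbation the tangency at $M_{2 \to 1}^*$ is quadratic, and $\lambda_\ell^*, \gamma_\ell^*$ satisfy the genericity and non-resonance conditions needed for $C^{r}$ linearization-type normal forms along the leading directions. Because the unstable directions are one-dimensional, and $\lambda_\ell^*, \gamma_\ell^*$ are real, simple and leading, each $O_\ell$ has a $C^{r-1}$ local normal form. In it the map is close to $(x,y,z)\mapsto(\lambda_\ell x, \gamma_\ell y, A_\ell z)$, where $y$ is the unstable coordinate, $x$ the leading stable coordinate, and $z$ the strong stable coordinates. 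We embed $f$ in a two-parameter family $f_{\mu,\varepsilon}$. Here $\mu$ splits the non-transverse connection, and $\varepsilon$ controls the product of the global transition maps, or equivalently a moduli-type quantity $\theta=-\ln\sigma_1/\ln\sigma_2$ that governs the ratio of passage times.

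\textbf{Stage 2: first-return maps and rescaling.} Take points that follow the cycle, spending $m$ iterates near $O_1$ and $n$ near $O_2$, pass once through the transverse connection and return through the tangency. The first-return map $T_{mn}=T_{21}T_1^{m}T_{12}T_2^{n}$ is a composition of local and global pieces. Following the Gonchenko--Shilnikov--Turaev scheme, we rescale coordinates near the tangency. The strong stable directions contract to zero in $C^{r-2}$. The leading two-dimensional part becomes
\[
(X,Y)\mapsto (Y,\ M - BX - Y^2) + o(1),
\qquad B \sim c\,(\lambda_1\gamma_1)^{m}(\lambda_2\gamma_2)^{n}.
\]
Because $\sigma_1<1<\sigma_2$, the set of values $(\lambda_1\gamma_1)^m(\lambda_2\gamma_2)^n$ is dense in a neighbourhood of $B=1$ as $m,n\to\infty$ (using $\varepsilon$ to fix the irrationality when needed). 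So for suitable $(m,n)$ and parameters, the rescaled return map is $C^{r-2}$-close to the standard H\'enon map with $(M,B)$ in any prescribed compact set near $B=1$. The parameters $(\mu,\varepsilon)$ then cover $(M,B)$ through a map that converges to a diffeomorphism.

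\textbf{Stage 3: Neimark--Sacker circles.} The H\'enon map has a fixed point with multipliers $e^{\pm i\phi}$ on the line $B=1$, $M=M(\phi)$. By the corrected Lyapunov coefficient formula, for an open set of $\phi$ (away from strong resonances) the first Lyapunov coefficient is nonzero with the sign that makes crossing to $B<1$ produce an attracting invariant circle. This is a supercritical Neimark--Sacker bifurcation. Normal hyperbolicity makes the circle robust under $C^{s}$-small perturbations with $s\ge5$, which is why $r\ge5$ and the $C^{s_k}$ versions appear. Pulling back, $f_{\mu,\varepsilon}$ has an attracting normally hyperbolic periodic circle of period about $m+n+\text{const}$. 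The circle is attracting in the full dimension because the strong stable directions are contracted and $B<1$ in the center.

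\textbf{Stage 4: Newhouse domain and Baire.} The transverse connection from $O_1$ to $O_2$ together with the tangency yields, by standard results, a horseshoe (wild hyperbolic set). Near $f$ there is therefore an open set $\mathcal{U}$ in which heteroclinic tangencies of the same type recur densely, with $\sigma$ values preserved. Let $\mathcal{U}_k$ be the subset of $\mathcal{U}$ of maps with an attracting normally hyperbolic periodic circle of period at least $k$. Each $\mathcal{U}_k$ is open, by robustness of normal hyperbolicity. It is dense, by applying Stages 1--3 at a dense set of tangencies with arbitrarily large $m+n$. Hence $\mathcal{R}=\bigcap_k\mathcal{U}_k$ is residual. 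In the $C^\infty$ case we replace $C^r$ smoothness by $C^{s_k}$ in $\mathcal{U}_k$, since smoothness of the circles degrades with period.

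\textbf{Main obstacle.} The hardest step is building the Newhouse domain with cycles of the same centrally dissipative-expanding type, so that $B\approx1$ stays reachable. Close behind is controlling the $C^{r-2}$ remainder uniformly enough that the nondegeneracy of the Lyapunov coefficient survives. I would first try to handle both by treating the pair $(O_1,O_2)$ inside a single hyperbolic basic set that contains both saddles.
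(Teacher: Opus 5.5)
\textbf{Stage 3 fails.} Your route is the paper's route: the splitting parameter plus $\log\sigma_2/\log\sigma_1$, rescaling $T_{ij}$ to the standard H\'enon map with $M_2\propto(\lambda_1\gamma_1)^i(\lambda_2\gamma_2)^j$, a Neimark--Sacker point on $L^\omega$, then a Newhouse domain and Baire. The reason Stage 3 fails applies equally to Propositions~\ref{prop-LC4Hen} and~\ref{prop-Henon-NS}, which your Stage 3 reproduces.

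On $L^\omega$ the H\'enon map has $\det DF\equiv M_2=1$, so it is area preserving. An area-preserving map cannot have a weakly attracting or weakly repelling elliptic point. If $\mathrm{LC}\neq0$, the expansion $|\widetilde\zeta|=|\zeta|+\mathrm{LC}\,|\zeta|^3+O(|\zeta|^4)$ of Remark~\ref{prop-LC-sign} would map each small disk $\{|\zeta|\le\rho\}$ onto a region strictly inside it or strictly containing it, changing its area. Hence the Lyapunov coefficient vanishes identically on $L^\omega$.

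You can check this directly. With $\widetilde z^{(20)}=\mathrm{i}\nu^2/(4s)$, $\widetilde z^{(11)}=\mathrm{i}/(2s)$, $\widetilde z^{(02)}=\mathrm{i}\overline\nu^{2}/(4s)$ and $\widetilde z^{(21)}=0$, the classical coefficient $\alpha=\widetilde z^{(21)}+\frac{2-\overline\nu}{1-\nu}\widetilde z^{(20)}\widetilde z^{(11)}+\frac{\nu}{\nu-1}|\widetilde z^{(11)}|^2+\frac{2\overline\nu}{\nu-\overline\nu^{2}}|\widetilde z^{(02)}|^2$ gives $\Re(\overline\nu\alpha)=\frac{3}{16s^2}-\frac{1}{8s^2}-\frac{1}{16s^2}=0$. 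Composing with $\mathrm{tr}^{-1}$ cannot change this. Indeed, $\mathrm{tr}\circ\mathrm{map}(z)=\nu\,\mathrm{tr}(z)+C(z)+O(|z|^4)$ with $C$ cubic, and $C(z)=C(w)+O(|w|^4)$.

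Worse, for $M_2\neq1$ the exact H\'enon map has no invariant circle at all. An invariant Jordan curve bounds an invariant disk, but the map multiplies that disk's area by $M_2$.

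Since your rescaled return map is only $o(1)$-close to H\'enon, the type of its Neimark--Sacker bifurcation is decided entirely by the $o(1)$ terms. The missing step is to do three things:
\begin{enumerate}
\item extract the leading corrections in $\mathsf{Y}_{ij}$, that is, the $XY$ and $Y^3$ terms of a generalized H\'enon map as in \cite{GG2000,GSS2002,GKM2005};
\item show that along a suitable sequence $(i,j)$ they give a Lyapunov coefficient of definite negative sign;
\item check that the remaining error terms are smaller still.
\end{enumerate}

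\textbf{Two further errors.} These are independent of the above.

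First, even granting a nonzero coefficient, you put the circle on the wrong side. Since $\det DF=B$, for $B<1$ the fixed point is an attracting focus, and a Neimark--Sacker circle on that side can only be the repelling, subcritical one. The paper looks for the attracting circle at $M_2>1$. Your claim that the circle attracts ``because $B<1$ in the center'' confuses area contraction with attraction to the circle.

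Second, you ignore the sign of $B$. One has $B\approx-B_{21}C_{21}J_{12}(\lambda_1\gamma_1)^i(\lambda_2\gamma_2)^j$. Suppose all leading multipliers are positive while $\det\mathbf J_{12}\det\mathbf J_{21}<0$. Then $B$ accumulates only at $-1$, where no fixed point has a complex pair of multipliers on the unit circle. Density of $\sigma_1^i\sigma_2^j$ near $1$ does not help here. This is why the paper imposes the orientability condition \textup{(P4)} and reduces to it separately.
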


\begin{remark}\leavevmode
\label{rem-LCinc}
\begin{enumerate}
\item[(1)] The periodic circles in Theorem~\ref{thm-main} are created by a Neimark-Sacker (Andronov-Hopf) bifurcation. This is why we require $r \ge 5$.
\item[(2)] If $2 \le r < 5$ and $M_\mathrm{ph}$ is smooth, then the same conclusion as in Theorem~\ref{thm-main} follows by approximating the original diffeomorphism by smooth diffeomorphisms and using a Newhouse domain; see Section~\ref{sec-PMT}.
\item[(3)] Even when $r = \infty$, we state only finite regularity for the periodic circles. The reason is that the argument passes through a local center manifold at a Neimark-Sacker (Andronov-Hopf) point, and such a center manifold is in general only finitely smooth; see \cite[Section~5.10.2]{Robinson1999}.
\item[(4)] When $\dim(M_\mathrm{ph}) = 2$, related invariant circle mechanisms were studied in \cite{GSS2002,GSS2006}. Theorem~\ref{thm-main} extends that picture to higher dimensions. We also correct the Lyapunov coefficient formula used in \cite[Equation~(30)]{GSS2002}; the existence results in those papers remain valid. See Section~\ref{sec-RLC}.
\end{enumerate}
\end{remark}

\subsection{Previous works and our results}

The background of Theorem~\ref{thm-main} has two complementary components. The first is the Newhouse theory of persistent tangencies and infinitely many sinks. In dimension two, Newhouse proved that persistent homoclinic tangencies generate open sets in which diffeomorphisms with infinitely many sinks are dense \cite{Newhouse1974}. Higher dimensional extensions and refinements were obtained by Palis and Viana \cite{PalisViana1994} and by Romero \cite{Romero1995}.

The second component concerns quasi periodic attractors born from tangencies. In dimension two, precursor bifurcation scenarios for homoclinic tangencies to neutral saddles were studied in \cite{G2002}. The birth of invariant circles and the corresponding Lyapunov coefficient computations were analyzed in \cite{GG2000,GG2004}. These ideas were developed in \cite{GSS2002,GSS2006}, yielding Newhouse domains containing diffeomorphisms with infinitely many stable and unstable invariant tori. In dimension three, related precursor configurations were studied in \cite{GO2005,Tatjer2001}, while \cite[Theorem~6]{GST2008} gave multidimensional criteria for the existence of infinitely many stable invariant tori in suitable Newhouse domains.

The present paper isolates a more concrete higher dimensional mechanism: a centrally dissipative-expanding transversal and non-transversal heteroclinic cycle of type two bi-saddles with one-dimensional unstable directions. The intermediate result, stated later as Theorem~\ref{thm-circ}, shows that suitable two-parameter unfoldings of such an orientable cycle contain parameter values with attracting periodic circles of arbitrarily large period. Theorem~\ref{thm-main} is then obtained by combining this local statement with the genericity of \textup{(P1)}--\textup{(P3)} (see Section~\ref{sec-GC}) and a Newhouse domain argument. A second point is that, after correcting the Lyapunov coefficient, the rescaled return map can be taken to be the standard H\'enon map rather than a larger quadratic H\'enon type family; see the next section. This makes the bifurcation analysis and the rescaling argument more direct.

\subsection{Correction of the Lyapunov coefficient}
\label{sec-RLC}

For the precise discussion, see Section~\ref{sec-BFPHM}.

Near a nonresonant Neimark-Sacker (Andronov-Hopf) point, the Lyapunov coefficient must be read from the fully normalized map
\[
\mathrm{tr} \circ \mathrm{map} \circ \mathrm{tr}^{-1} : w \longmapsto \widetilde{w},
\]
not from the partially transformed expression
\[
\mathrm{tr} \circ \mathrm{map} : z \longmapsto \widetilde{w}.
\]
Proposition~\ref{prop-LCfmla} gives the formula used later. As explained in Remark~\ref{rem-LCinc}~(4), omitting $\mathrm{tr}^{-1}$ is the normalization error behind \cite[Equation~(30)]{GSS2002} and the corresponding formula in \cite[the equation before Equation~(50)]{GG2000}. The same omission also appears in \cite[Section~4, Equation~(4.26)]{K2023}, \cite[Chapter~III, Section~1, Exercise~1]{Iooss1979}, and in the quoted formula of \cite[Theorem~4]{MoraRuiz2011}.

For the standard H\'enon map $F_{(M_1,M_2)}: (X, Y) \longmapsto (\overline{X}, \overline{Y})$,
\[
\overline{X} = Y,
\qquad
\overline{Y} = M_1 - M_2 X - Y^2,
\]
the Neimark-Sacker (Andronov-Hopf) line is
\[
L^\omega := \{(M_1,M_2) \in \mathbb{R}^2 \mid M_2 = 1,\ -1 < M_1 < 3\}.
\]
The resonant points on $L^\omega$ are
\[
C_1^\omega = (0,1),
\qquad
C_2^\omega = \left(\frac{5}{4},1\right).
\]
Proposition~\ref{prop-LC4Hen} shows that the corrected coefficient does not vanish on $L^\omega \setminus \{C_1^\omega,C_2^\omega\}$. Hence the standard H\'enon map already yields the nondegenerate Neimark-Sacker (Andronov-Hopf) bifurcations used in this paper. Figure~\ref{fig-bifdiam} shows the graph of the coefficient and the bifurcation diagram.
\begin{figure}[h]
\centering
\begin{minipage}[c]{0.40\linewidth}
\centering
\includegraphics[width=\linewidth]{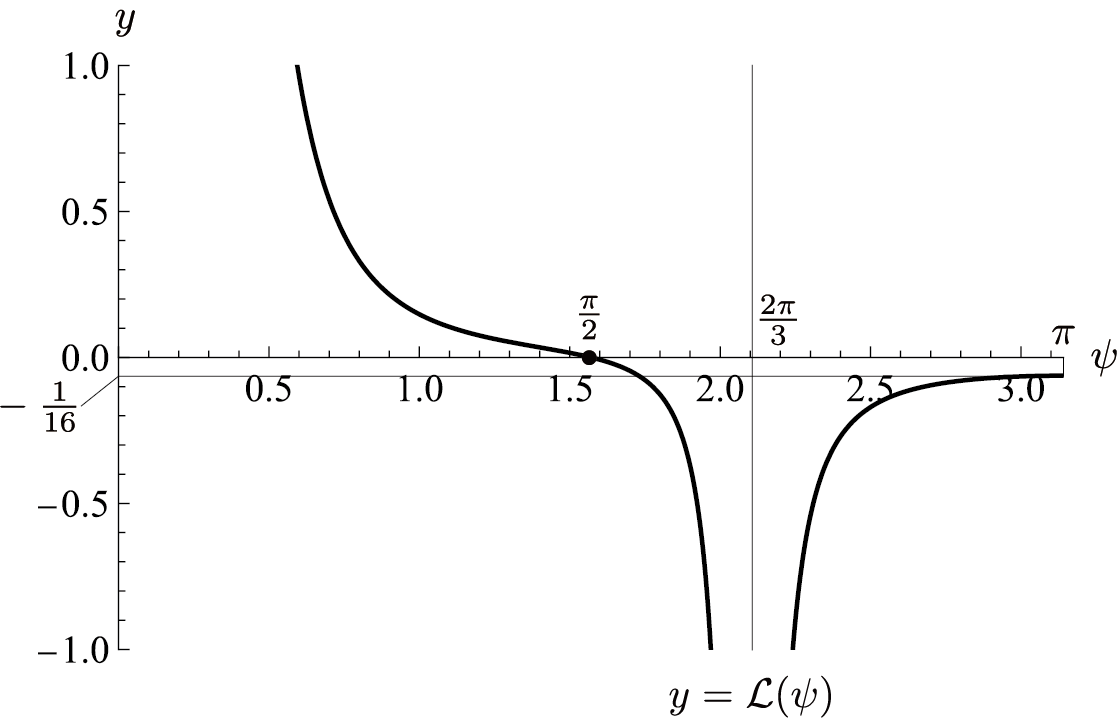}

\small (a)
\end{minipage}
\hfill
\begin{minipage}[c]{0.56\linewidth}
\centering
\includegraphics[width=\linewidth]{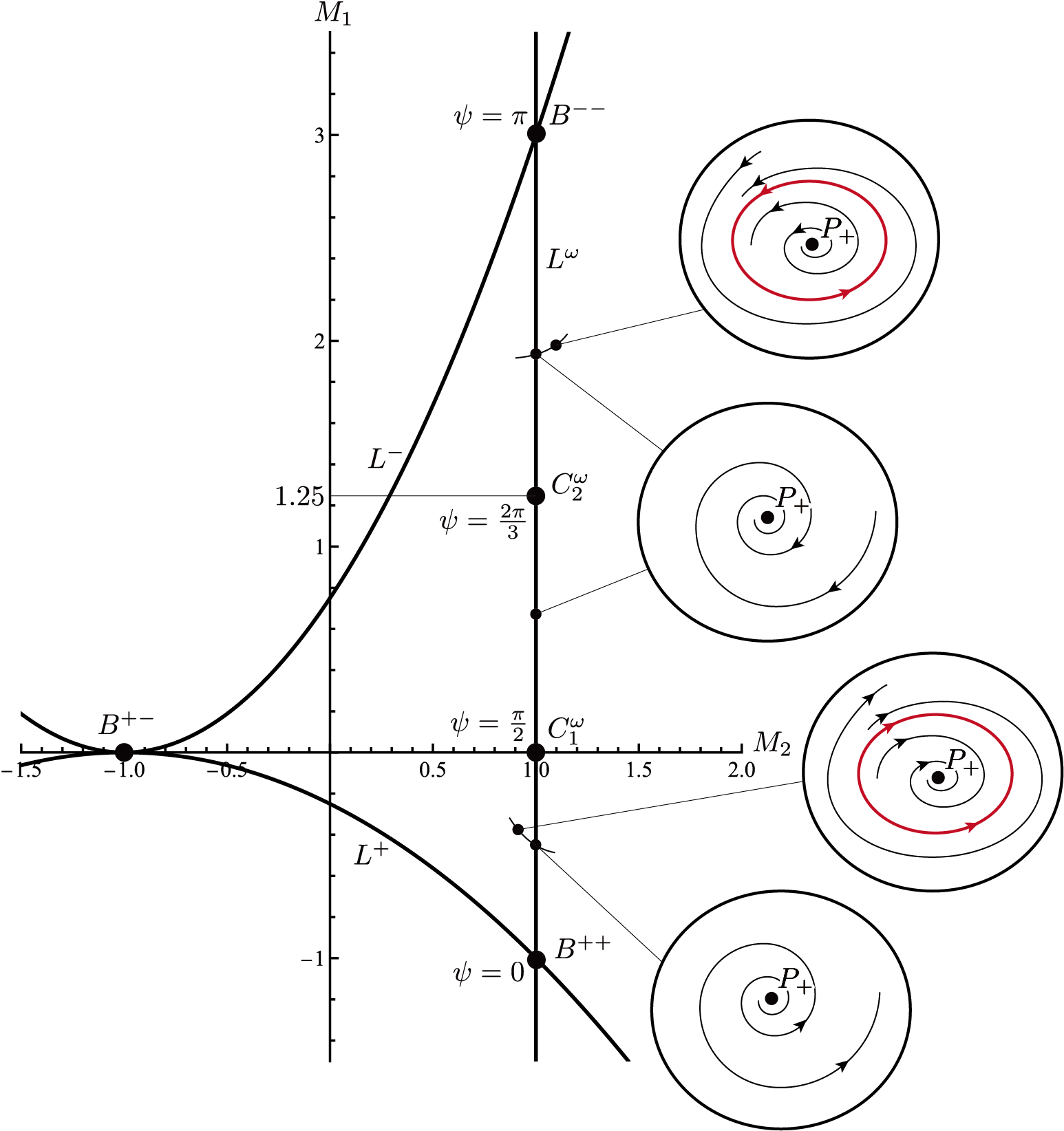}

\small (b)
\end{minipage}
\caption{(a) Graph of the Lyapunov coefficient $\mathcal{L}(\psi)$ for the standard H\'enon map. (b) Bifurcation diagram of the standard H\'enon map. On $L^\omega$, $M_1=\cos^2\psi-2\cos\psi$.}
\label{fig-bifdiam}
\end{figure}

\begin{remark}\leavevmode
\begin{enumerate}
\item[(1)] For our proof, the generalized H\'enon map is unnecessary; compare \cite{GSS2002,GKM2005,GOT2012}. The standard H\'enon map already suffices after the correction.
\item[(2)] This correction does not affect the mechanism in \cite{GSS2002}. The invariant circles used there arise near $B^{--} = (3,1)$, where $P_+$ has the double multiplier $(-1,-1)$, not from a Neimark-Sacker (Andronov-Hopf) bifurcation on $L^\omega$.
\end{enumerate}
\end{remark}

\subsection{Organization of this paper}

Section~\ref{sec-prelim} introduces the geometric setting, the unfoldings, and the return maps. Section~\ref{sec-rescaling} proves the rescaling lemma. Section~\ref{sec-BFPHM} studies fixed point bifurcations of the standard H\'enon map. Section~\ref{sec-PMT} proves Theorem~\ref{thm-main}.
\section{Preliminaries}
\label{sec-prelim}

\subsection{Geometrical settings}
\label{sec-GS}

\subsubsection{Unparametrized local maps and global maps}

Let $f$ satisfy the assumptions of Theorem~\ref{thm-main}. Choose pairwise disjoint small connected open neighborhoods $U_1^*$ and $U_2^*$ of $O_1^*$ and $O_2^*$, respectively. For each $\ell \in \{1,2\}$, define the local return map
\[
T_\ell^* = T_\ell^*(f,U_\ell^*) := f^{\mathrm{per}(O_\ell^*)}|_{U_\ell^*}.
\]
Choose \emph{base points}
\[
M_{1,\mathrm{in}}^*,\ M_{1,\mathrm{out}}^*,\ M_{2,\mathrm{in}}^*,\ M_{2,\mathrm{out}}^*
\]
so that
\begin{align*}
&M_{1,\mathrm{in}}^* \in U_1^* \cap \Gamma_{2 \to 1}^*, \qquad
M_{1,\mathrm{out}}^* \in U_1^* \cap \Gamma_{1 \to 2}^*, \\
&M_{2,\mathrm{in}}^* \in U_2^* \cap \Gamma_{1 \to 2}^*, \qquad
M_{2,\mathrm{out}}^* \in U_2^* \cap \Gamma_{2 \to 1}^*.
\end{align*}

Let
\[
N_{1 \to 2},\ N_{2 \to 1} \in \mathbb{Z}_{>0}
\]
be such that
\[
f^{N_{1 \to 2}}(M_{1,\mathrm{out}}^*) = M_{2,\mathrm{in}}^*,
\qquad
f^{N_{2 \to 1}}(M_{2,\mathrm{out}}^*) = M_{1,\mathrm{in}}^*.
\]
Choose sufficiently small neighborhoods $V_1^* \subset U_1^*$ and $V_2^* \subset U_2^*$ of $M_{1,\mathrm{out}}^*$ and $M_{2,\mathrm{out}}^*$, respectively, such that
\[
f^{N_{1 \to 2}}(V_1^*) \subset U_2^*,
\qquad
f^{N_{2 \to 1}}(V_2^*) \subset U_1^*.
\]
Define the global maps by
\begin{align*}
T_{1 \to 2}^* = T_{1 \to 2}^*(f, V_1^*) := f^{N_{1 \to 2}}|_{V_1^*},
\qquad
T_{2 \to 1}^* = T_{2 \to 1}^*(f, V_2^*) := f^{N_{2 \to 1}}|_{V_2^*}.
\end{align*}

\subsubsection{Geometrical conditions}
\label{sec-GC}

Since both hyperbolic periodic points have one-dimensional unstable directions,
\[
d_s := \dim W^s(O_1^*) = \dim W^s(O_2^*) = \dim(M_\mathrm{ph}) - 1.
\]
For each $\ell \in \{1,2\}$, let $E_\ell^\lambda$ and $E_\ell^\gamma$ be the eigenspaces of $Df^{\mathrm{per}(O_\ell^*)}(O_\ell^*)$ for the leading multipliers $\lambda_\ell^*$ and $\gamma_\ell^*$, and let $E_\ell^{ss}$ be the sum of the other stable eigenspaces. In $U_\ell^*$, fix a local strong stable foliation of a local stable manifold of $O_\ell^*$ and a two-dimensional local extended unstable invariant manifold through $O_\ell^*$, tangent at $O_\ell^*$ to $E_\ell^\lambda \oplus E_\ell^\gamma$; see \cite{SSTC2001}. For $P$ on this local stable manifold, let $\mathcal{F}^{ss}_\ell(P)$ be the local strong stable leaf through $P$. Any two such extended unstable manifolds are tangent along the local unstable manifold.

Near $M_{2 \to 1}^*$, $W^u(\mathrm{Orb}(O_2^*))$ is a curve and $W^s(\mathrm{Orb}(O_1^*))$ is a hypersurface. Let
\[
c = h \circ \zeta,
\]
where $\zeta : (-\delta,\delta) \longrightarrow W^u(\mathrm{Orb}(O_2^*))$ is a $C^r$ parametrization with $\zeta(0) = M_{2 \to 1}^*$ and $h$ is a $C^r$ defining function of $W^s(\mathrm{Orb}(O_1^*))$ near $M_{2 \to 1}^*$. We call the intersection at $M_{2 \to 1}^*$ a \emph{quadratic tangency} if
\[
c(0) = c'(0) = 0,
\qquad
c''(0) \neq 0.
\]
This condition is independent of the choices of $\zeta$ and $h$.

\begin{definition}
Let $W_1^{uE}$ and $W_2^{uE}$ be sufficiently small neighborhoods of $M_{1,\mathrm{out}}^*$ and $M_{2,\mathrm{out}}^*$ inside extended unstable manifolds of $O_1^*$ and $O_2^*$, respectively. We say that $(f,\Gamma^*)$ satisfies \textup{(P1)}--\textup{(P3)} if the following hold (see also Figure~\ref{fig-gc}):
\begin{itemize}
\item \textup{(P1)} $W^u(\mathrm{Orb}(O_2^*))$ and $W^s(\mathrm{Orb}(O_1^*))$ have a quadratic tangency at $M_{2 \to 1}^*$.
\item \textup{(P2)} $T_{1 \to 2}^*(W_1^{uE})$ is transverse to $\mathcal{F}^{ss}_2(M_{2,\mathrm{in}}^*)$ at $M_{2,\mathrm{in}}^*$, namely,
\[
T_{M_{2,\mathrm{in}}^*}\bigl(T_{1 \to 2}^*(W_1^{uE})\bigr) + T_{M_{2,\mathrm{in}}^*}\mathcal{F}^{ss}_2(M_{2,\mathrm{in}}^*) = T_{M_{2,\mathrm{in}}^*} M_\mathrm{ph}.
\]
\item \textup{(P3)} $T_{2 \to 1}^*(W_2^{uE})$ is transverse to $\mathcal{F}^{ss}_1(M_{1,\mathrm{in}}^*)$ at $M_{1,\mathrm{in}}^*$, namely,
\[
T_{M_{1,\mathrm{in}}^*}\bigl(T_{2 \to 1}^*(W_2^{uE})\bigr) + T_{M_{1,\mathrm{in}}^*}\mathcal{F}^{ss}_1(M_{1,\mathrm{in}}^*) = T_{M_{1,\mathrm{in}}^*} M_\mathrm{ph}.
\]
\end{itemize}
\end{definition}

\begin{figure}[h]
\centering
\includegraphics[width=0.54\linewidth]{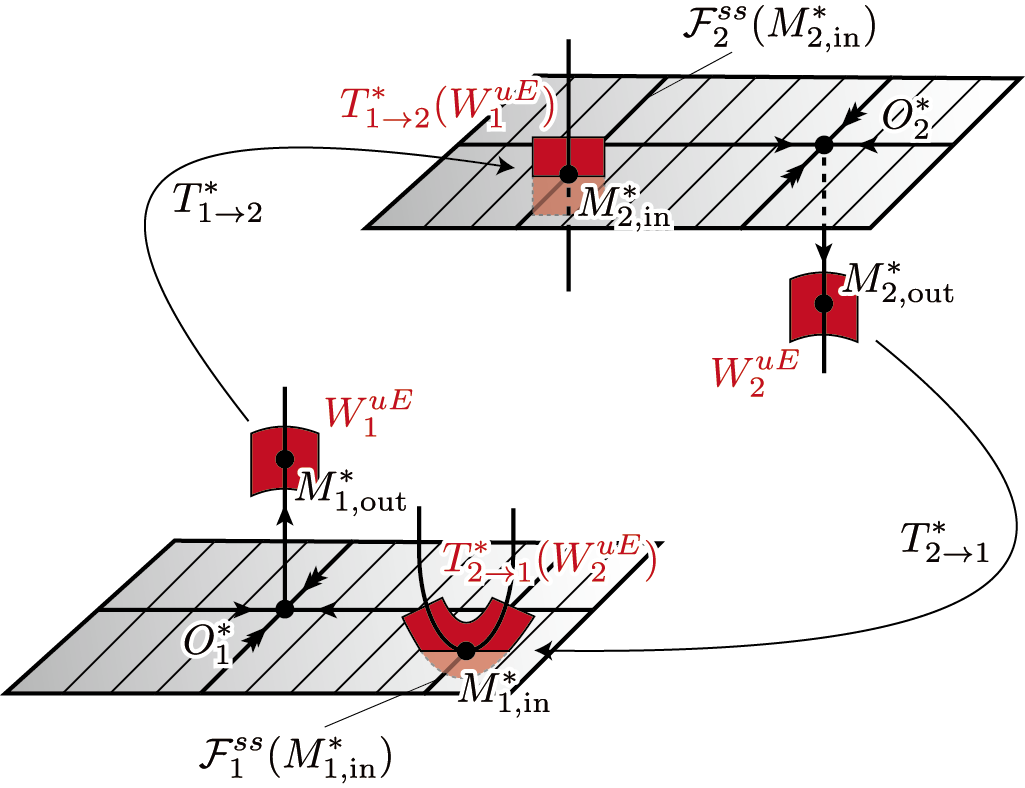}
\caption{Geometrical configuration of \textup{(P2)}, \textup{(P3)}}
\label{fig-gc}
\end{figure}

\begin{remark}
\label{rem-GCinv}
Because extended unstable manifolds are tangent along the local unstable manifold, \textup{(P2)} and \textup{(P3)} do not depend on the chosen manifolds or on smaller neighborhoods $W_\ell^{uE}$. Since $f$ is a diffeomorphism, moving the marked points along the heteroclinic connections does not change \textup{(P1)}--\textup{(P3)}. Hence \textup{(P1)}--\textup{(P3)} are finite-jet conditions at the marked points.
\end{remark}

Since \textup{(P1)}--\textup{(P3)} are finite-jet conditions at the marked points, one can verify the following proposition.

\begin{proposition}
\label{prop-generic}
Suppose that $f$ has a centrally dissipative-expanding transversal and non-transversal heteroclinic cycle $\Gamma^*$ of type two bi-saddles with one-dimensional unstable directions. Then every $C^r$ neighborhood of $f$ in $\mathrm{Diff}^r(M_\mathrm{ph})$ contains a diffeomorphism $g$ with a centrally dissipative-expanding transversal and non-transversal heteroclinic cycle $\Gamma_{\mathrm{new}}^*$ of the same type, associated with the hyperbolic continuations of $\mathrm{Orb}(O_1^*)$ and $\mathrm{Orb}(O_2^*)$, such that $(g,\Gamma_{\mathrm{new}}^*)$ satisfies \textup{(P1)}--\textup{(P3)}.
\end{proposition}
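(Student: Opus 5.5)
We obtain $g$ by a finite sequence of $C^r$-small perturbations, each supported in a small ball around one marked point of the cycle away from $\mathrm{Orb}(O_1^*)\sqcup\mathrm{Orb}(O_2^*)$. First, the hyperbolic continuations $O_\ell$ stay hyperbolic with $u$-index $1$. Since the leading multipliers are real and simple, they persist as real simple leading multipliers, and $\sigma_1 < 1 < \sigma_2$ (or the reverse) persists. Because our perturbations are supported off the periodic orbits, these multipliers are not changed at all. Local stable and unstable manifolds, strong stable foliations, and extended unstable manifolds near $O_\ell$ are likewise unchanged on $U_\ell^*$ if the supports are chosen outside $U_1^*\cup U_2^*$. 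The transverse connection $\Gamma_{1\to 2}^*$ persists under small perturbation, with a continuation $M_{1\to2}$.

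\emph{Step 1 (quadratic tangency, (P1)).} Work near a point $Q=f^{-1}(M_{2\to1}^*)$ on $\Gamma_{2\to1}^*$ outside $U_1^*\cup U_2^*$. Take a chart at $M_{2\to1}^*$ in which the local piece of $W^s(\mathrm{Orb}(O_1^*))$ is $\{x_1=0\}$ and the unstable curve is $\zeta(t)$ with $c(t)=x_1(\zeta(t))$. Here $c(0)=c'(0)=0$ by non-transversality, because $W^u$ is a curve tangent to the hypersurface. Compose $f$ with a diffeomorphism $\Phi$ near $M_{2\to1}^*$ of the form $x_1\mapsto x_1+\varepsilon\,\rho(x)\,\xi(x)$. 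Here $\rho$ is a bump function and $\xi$ is a function of the coordinate along $\zeta'(0)$ chosen so that the new $c$ becomes $c(t)+\varepsilon t^2$ near $0$ while $c(0)=c'(0)=0$ are kept. The new map $g=\Phi\circ f$ is $C^r$-close to $f$ for $\varepsilon$ small. Since $\Phi$ is the identity near $W^s_{\mathrm{loc}}$ outside a small ball and preserves $\{x_1=0\}$, the stable hypersurface near $M_{2\to1}^*$ is unchanged. The tangency remains at $M_{2\to1}^*$, and generically in $\varepsilon$ we get $c''(0)+2\varepsilon\ne0$. This gives a non-transverse connection with a quadratic tangency, so the new cycle $\Gamma_{\mathrm{new}}^*$ exists with transverse $1\to2$ and non-transverse $2\to1$ connections.

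\emph{Step 2 ((P2),(P3)).} By Remark~\ref{rem-GCinv}, (P2) and (P3) are conditions on the $1$-jet of the global maps at $M_{\ell,\mathrm{out}}$ relative to fixed local objects. Consider (P2). The tangent plane $T(T_{1\to2}(W_1^{uE}))$ at $M_{2,\mathrm{in}}$ is a $2$-plane containing the unstable direction of $W^u(O_1)$. The leaf $T\mathcal{F}_2^{ss}$ has codimension $2$ and lies inside $T W^s$. By transversality of $\Gamma_{1\to2}$, the unstable direction is already transverse to $T W^s$. It therefore suffices that the second vector of the $2$-plane, the image of the $E_1^\lambda$-direction, has a nonzero component along $E_2^\lambda$ modulo $T\mathcal{F}^{ss}_2\oplus$(unstable direction). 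We achieve this by composing, near an intermediate point $f^{j}(M_{1,\mathrm{out}})$ outside $U_1^*\cup U_2^*$, with a diffeomorphism whose derivative at that point is a small linear map. This map fixes the point and fixes the image of $W^u$ pointwise to first order, but rotates the extended direction slightly. The failure of (P2) is a closed codimension-one condition on this linear data, so an arbitrarily small choice avoids it. (P3) is handled the same way near a point of $\Gamma_{2\to1}^*$, keeping the $x_1$-component untouched so that Step 1 is not destroyed.

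The main obstacle is compatibility. The perturbation for (P3) acts along $\Gamma_{2\to1}^*$, which also carries the tangency. We will place the (P3) perturbation at a point different from the Step 1 point and require it to preserve $W^u(\mathrm{Orb}(O_2))$ as a set near that point, by acting only in directions transverse to the unstable curve and vanishing to first order along it. The tangency and its quadratic nature are then unaffected. Alternatively, all conditions are open in the jets once achieved, except the existence of the tangency itself, which we fix last via Step 1 by a perturbation small enough to keep (P2) and (P3).
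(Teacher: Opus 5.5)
Your Step~1 and your treatment of \textup{(P2)} are essentially fine, but \textup{(P3)} cannot be ``handled the same way'' as \textup{(P2)}, and the proposal as written does not achieve it. In \textup{(P2)} you used that the tangent vector of $W^u(\mathrm{Orb}(O_1^*))$ at $M_{2,\mathrm{in}}^*$ is transverse to $T W^s(\mathrm{Orb}(O_2^*))$. It is therefore automatically outside $T\mathcal{F}^{ss}_2$, and only the second spanning vector of the $2$-plane has to be moved. At $M_{1,\mathrm{in}}^*$ the connection is non-transverse. So the tangent vector $v_u$ of $W^u(\mathrm{Orb}(O_2^*))$ lies in the hyperplane $H := T_{M_{1,\mathrm{in}}^*}W^s(\mathrm{Orb}(O_1^*))$, which contains the $(d_s-1)$-dimensional space $T\mathcal{F}^{ss}_1(M_{1,\mathrm{in}}^*)$. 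A dimension count then shows that \textup{(P3)} is equivalent to two independent conditions:
\begin{itemize}
\item[(a)] $v_u \notin T\mathcal{F}^{ss}_1(M_{1,\mathrm{in}}^*)$;
\item[(b)] the plane $T\bigl(T_{2\to1}^*(W_2^{uE})\bigr)$ is not contained in $H$.
\end{itemize}
In the paper's coordinates these are $B_{21}\neq 0$ and $C_{21}\neq 0$ (Remark~\ref{rem-gcmeaning}). Since $\dim M_\mathrm{ph}\ge 3$, the strong stable leaf is nontrivial, and nothing in the hypotheses excludes a tangency along a strong stable direction, i.e.\ $B_{21}=0$. Your \textup{(P3)} perturbation is required to fix $W^u(\mathrm{Orb}(O_2))$ pointwise, or to first order, near its support. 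It therefore leaves $v_u$ unchanged and can only produce (b). Step~1 does not help either, because its $\Phi$ satisfies $D\Phi(M_{2\to1}^*)=\mathrm{id}$.

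The missing ingredient is a perturbation that rotates the tangent line of $W^u(\mathrm{Orb}(O_2))$ \emph{within} the stable hyperplane. At some $P'\in\Gamma_{2\to1}^*$ away from the other supports, take $\Psi$ with $\Psi(P')=P'$ and $D\Psi(P')=I+L$. Choose $Lv_u$ to be a small vector in $T_{P'}W^s(\mathrm{Orb}(O_1))$ such that the transported direction at $M_{1,\mathrm{in}}$ acquires a nonzero $E_1^\lambda$-component, and let another part of $L$ tilt the extended direction off $H$ to get (b). Since $W^s(\mathrm{Orb}(O_1))$ near $P'$ is unaffected and the new tangent vector stays in $T_{P'}W^s$, the intersection at $P'$ remains non-transverse. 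Step~1 can then be applied last without disturbing (a), (b), or \textup{(P2)}.

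Your ``alternatively'' ordering does not rescue the argument. Step~1 presupposes $c(0)=c'(0)=0$ at the marked point, so it cannot recreate a tangency that an earlier perturbation destroyed.

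A minor correction in Step~1: with $\xi$ quadratic in the coordinate along $\zeta'(0)$, $\Phi$ does not preserve the hypersurface $\{x_1=0\}$ of your chart. The stable hypersurface near $M_{2\to1}^*$ is unchanged for a different reason. The map $g=\Phi\circ f$ differs from $f$ only on $f^{-1}(\mathrm{supp}\,\Phi)$, which lies near $Q$, and forward orbits of points near $M_{2\to1}^*$ never visit that set.
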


\subsection{Two Parameters}
\label{sec-TP}

Let $\{f_\varepsilon\}_{\varepsilon \in R_\mathrm{prm}^*} \subset \mathrm{Diff}^r(M_\mathrm{ph})$ be a two-parameter family with $f_{\varepsilon^*} = f$, where $R_\mathrm{prm}^* \subset \mathbb{R}^2$ is a connected open neighborhood of $\varepsilon^*$. We assume that the parameter dependence is of class $C^r$, namely that the map
\[
M_\mathrm{ph} \times R_\mathrm{prm}^* \longrightarrow M_\mathrm{ph},
\qquad
(P,\varepsilon) \longmapsto f_\varepsilon(P),
\]
is $C^r$. Fix a sufficiently small open disk $R_\mathrm{prm} \subset R_\mathrm{prm}^*$ centered at $\varepsilon^*$ such that each $O_\ell^*$ has a unique hyperbolic continuation $O_\ell(\varepsilon)$ for $\varepsilon \in R_\mathrm{prm}$.

Assume that $(f,\Gamma^*)$ satisfies \textup{(P1)}. Let $W^s \subset W^s(\mathrm{Orb}(O_1^*))$ and $W^u \subset W^u(\mathrm{Orb}(O_2^*))$ be small neighborhoods of $M_{2 \to 1}^*$. After shrinking $R_\mathrm{prm}$, these sets have continuations $W^s_\varepsilon \subset W^s(\mathrm{Orb}(O_1(\varepsilon)))$ and $W^u_\varepsilon \subset W^u(\mathrm{Orb}(O_2(\varepsilon)))$ that depend $C^r$ on $\varepsilon$. Choose the \emph{splitting parameter}, namely a $C^r$ function $\mu_1$ on $R_\mathrm{prm}$ given by the signed critical value of a local defining function of $W^s_\varepsilon$ restricted to $W^u_\varepsilon$ near $M_{2 \to 1}^*$.

Let $\lambda_\ell(\varepsilon)$ and $\gamma_\ell(\varepsilon)$ be the continuations of $\lambda_\ell^*$ and $\gamma_\ell^*$. Note that $\lambda_\ell(\varepsilon)$ and $\gamma_\ell(\varepsilon)$ are $C^{r - 1}$. Set
\[
\sigma_\ell(\varepsilon) := |\lambda_\ell(\varepsilon)\gamma_\ell(\varepsilon)|
\qquad
(\ell \in \{1,2\}).
\]
Since the cycle is centrally dissipative-expanding,
\[
\sigma_1^* < 1 < \sigma_2^*
\qquad\text{or}\qquad
\sigma_2^* < 1 < \sigma_1^*.
\]
After shrinking $R_\mathrm{prm}$, the same inequalities hold for all $\varepsilon \in R_\mathrm{prm}$. Define
\[
\mu_2(\varepsilon) := \frac{\log \sigma_2(\varepsilon)}{\log \sigma_1(\varepsilon)}.
\]
Then $\mu_2$ is $C^{r - 1}$,
\[
\mu_2(\varepsilon^*) = \frac{\log \sigma_2^*}{\log \sigma_1^*} < 0,
\]
and
\[
\sigma_1(\varepsilon)^i \sigma_2(\varepsilon)^j = \sigma_1(\varepsilon)^{i + \mu_2(\varepsilon)j}
\qquad
(i,j \in \mathbb{Z}).
\]

\begin{definition}
We say that the family $\{f_\varepsilon\}_{\varepsilon \in R_\mathrm{prm}^*}$ \emph{unfolds properly at $\varepsilon = \varepsilon^*$ with respect to $\Gamma^*$} if
\[
\det \frac{\partial(\mu_1,\mu_2)}{\partial\varepsilon}(\varepsilon^*) \neq 0.
\]
A family satisfying this condition will be called a \emph{proper unfolding family} of $(f,\Gamma^*)$.
\end{definition}

\begin{remark}
\label{rem-prm}
Under \textup{(P1)}--\textup{(P3)}, local perturbations with disjoint supports near $M_{2 \to 1}^*$ and near one periodic point produce a proper unfolding family of $(f,\Gamma^*)$.
\end{remark}

\begin{convention}
\label{conv-prm}
Assume $\{f_\varepsilon\}_{\varepsilon \in R_\mathrm{prm}^*}$ unfolds properly. After shrinking $R_\mathrm{prm}$ if necessary,
\[
R_\mathrm{prm} \ni \varepsilon \longmapsto \bigl(\mu_1(\varepsilon),\mu_2(\varepsilon)\bigr)
\]
is a $C^{r - 1}$ diffeomorphism onto its image. Henceforth we sometimes use $(\mu_1,\mu_2)$ as parameter and write $\varepsilon = (\mu_1,\mu_2)$. In particular,
\[
\varepsilon^* = \left(0,\frac{\log \sigma_2^*}{\log \sigma_1^*}\right).
\]
\end{convention}

\subsection{Local maps}

\subsubsection{Approximately linearized coordinates}

For the rest of Section~\ref{sec-prelim}, assume $(f,\Gamma^*)$ satisfies \textup{(P1)}--\textup{(P3)}, and $\{f_\varepsilon\}_{\varepsilon \in R_\mathrm{prm}^*}$ is a proper unfolding family with $f_{\varepsilon^*} = f$.

For each $\ell \in \{1,2\}$, let
\[
T_\ell = T_\ell(\varepsilon; f, U_\ell^*, \{ f_\varepsilon \}_{\varepsilon \in R_\mathrm{prm}^*}) := f_\varepsilon^{\mathrm{per}(O_\ell^*)}|_{U_\ell^*},
\qquad
\varepsilon \in R_\mathrm{prm}.
\]
Let $\lambda_{\ell,d_s}(\varepsilon),\lambda_{\ell,d_s-1}(\varepsilon),\dots,\lambda_{\ell,2}(\varepsilon)$ be the continuations of the remaining stable multipliers of $O_\ell^*$. Since $O_\ell(\varepsilon)$ is a hyperbolic fixed point of $T_\ell$ with one-dimensional weak stable and unstable directions and a $(d_s - 1)$-dimensional strong stable complement, \cite[Lemma~6]{GST2008} yields the following normal form.

\begin{lemma}[{\cite[Lemma~6]{GST2008}, adapted}]
\label{lem-ALC}
After shrinking $R_\mathrm{prm}$ if necessary, for every $\varepsilon \in R_\mathrm{prm}$ there exist $C^r$ coordinates $(x_\ell,y_\ell,u_\ell)$ on $U_\ell^*$, depending on $\varepsilon$, with $x_\ell,y_\ell \in \mathbb{R}$ and $u_\ell \in \mathbb{R}^{d_s-1}$, such that
\[
T_\ell : (x_\ell,y_\ell,u_\ell) \longmapsto (\widehat{x}_\ell,\widehat{y}_\ell,\widehat{u}_\ell)
\]
has the form
\begin{align*}
\widehat{x}_\ell &= \lambda_\ell x_\ell + \mathsf{P}_1^{(\ell)}(x_\ell,y_\ell,u_\ell,\varepsilon), \\
\widehat{y}_\ell &= \gamma_\ell y_\ell + \mathsf{P}_2^{(\ell)}(x_\ell,y_\ell,u_\ell,\varepsilon), \\
\widehat{u}_\ell &= \mathbf{A}_\ell u_\ell + \mathsf{P}_3^{(\ell)}(x_\ell,y_\ell,u_\ell,\varepsilon),
\end{align*}
where $\mathbf{A}_\ell = \mathbf{A}_\ell(\varepsilon)$ is a $(d_s - 1)\times(d_s - 1)$ matrix whose eigenvalues are
\[
\lambda_{\ell,d_s},\ \lambda_{\ell,d_s-1},\ \dots,\ \lambda_{\ell,2}.
\]
Moreover,
\begin{align*}
\mathsf{P}_1^{(\ell)}(0,y_\ell,0,\varepsilon),\ \mathsf{P}_2^{(\ell)}(0,y_\ell,0,\varepsilon),\ \mathsf{P}_3^{(\ell)}(0,y_\ell,0,\varepsilon) &\equiv 0, \\
\mathsf{P}_1^{(\ell)}(x_\ell,0,u_\ell,\varepsilon),\ \mathsf{P}_2^{(\ell)}(x_\ell,0,u_\ell,\varepsilon) &\equiv 0, \\
\partial_{x_\ell} \mathsf{P}_1^{(\ell)}(0,y_\ell,0,\varepsilon),\ \partial_{x_\ell} \mathsf{P}_3^{(\ell)}(0,y_\ell,0,\varepsilon) &\equiv 0, \\
\partial_{y_\ell} \mathsf{P}_2^{(\ell)}(x_\ell,0,u_\ell,\varepsilon) &\equiv 0.
\end{align*}
\end{lemma}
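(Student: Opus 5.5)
The plan is to build the coordinates in stages, straightening more invariant objects at each stage. At every stage the normalizing changes should have the same $C^r$ smoothness and depend continuously (indeed $C^{r-1}$, with the multipliers) on $\varepsilon$. Because the statement is essentially \cite[Lemma~6]{GST2008}, the work is mainly to check that its hypotheses hold uniformly in $\varepsilon \in R_\mathrm{prm}$ and to translate its conclusions into the form stated above.

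\textbf{Step 1: linear part.} First, by the implicit function theorem, $O_\ell(\varepsilon)$ depends smoothly on $\varepsilon$. Translate it to the origin. Then choose a linear change of coordinates, continuous in $\varepsilon$, that block-diagonalizes $DT_\ell(O_\ell(\varepsilon))$ into three blocks: the simple real leading multipliers $\lambda_\ell$ and $\gamma_\ell$, and the strong stable block $\mathbf{A}_\ell$. Simplicity of $\lambda_\ell^*$ and $\gamma_\ell^*$, together with the spectral gap to the remaining stable multipliers, makes the three spectral projectors continuous in $\varepsilon$ after shrinking $R_\mathrm{prm}$.

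\textbf{Step 2: straighten invariant manifolds.} Next, straighten the local stable manifold to $\{y_\ell=0\}$ and the local unstable manifold to $\{x_\ell=0,u_\ell=0\}$. These manifolds are $C^r$ and depend continuously on $\varepsilon$. Invariance of the unstable manifold gives $\mathsf{P}_1^{(\ell)},\mathsf{P}_3^{(\ell)}=0$ on $\{x=u=0\}$. Invariance of the stable manifold gives $\mathsf{P}_2^{(\ell)}=0$ on $\{y=0\}$.

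After that, linearize the dynamics on each one-dimensional piece:
\begin{itemize}
\item On $W^u_{loc}$ we need $\mathsf{P}_2^{(\ell)}(0,y,0)=0$. The restriction $y\mapsto \gamma_\ell y+O(y^2)$ is a one-dimensional expanding contraction in reverse time, so Sternberg/Koenigs linearization in dimension one is $C^r$ with no resonance conditions.
\item On the stable manifold, straighten the strong stable foliation so that its leaves become $\{x=\text{const}\}$; this gives $\mathsf{P}_1^{(\ell)}(x,0,u)$ independent of $u$. Then linearize the induced one-dimensional quotient map $x\mapsto\lambda_\ell x+\dots$ to obtain $\mathsf{P}_1^{(\ell)}(x,0,u)=0$.
\end{itemize}
The foliation here is the $C^r$ foliation of \cite{SSTC2001}. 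Its smoothness within $W^s_{loc}$ holds because $\lambda_\ell$ is the unique weakest stable multiplier.

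\textbf{Step 3: derivative conditions.} The remaining identities are $\partial_x\mathsf{P}_1^{(\ell)}=\partial_x\mathsf{P}_3^{(\ell)}=0$ on $W^u_{loc}$ and $\partial_y\mathsf{P}_2^{(\ell)}=0$ on $W^s_{loc}$. These say that $T_\ell$ is linear, to first order in the transverse direction, along each local invariant manifold.

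For the first pair, straighten an invariant $C^r$ extended unstable manifold (tangent to $E^\lambda\oplus E^\gamma$) to $\{u=0\}$, which gives $\partial_x \mathsf{P}_3^{(\ell)}(0,y,0)=0$. Then solve a linear cohomological equation along $W^u_{loc}$ for a coordinate change $x\mapsto x+x\,\phi(y)$. The equation has the form $\phi(\gamma y)\lambda=\lambda\phi(y)+g(y)$ with $g(0)=0$. It is solved by the series $\phi(y)=-\sum_{k\ge1}\lambda^{-1}g(\gamma^{-k}y)$, which converges geometrically because $g(0)=0$ and $\gamma^{-k}\to0$. The same computation shows $\phi$ is $C^r$.

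For the second condition, the analogous change $y\mapsto y+y\,\psi(x,u)$ along $W^s_{loc}$ leads to the equation $\gamma\psi(x,u)+h(x,u)=\gamma\psi(\lambda x,\mathbf{A}u+\dots)$. Iterating forward, it converges since the stable dynamics contract to $0$.

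\textbf{Main obstacle.} I expect the hardest part to be the smoothness bookkeeping. The series solutions must be shown to be $C^r$ in $(x,u)$ or $y$, and only $C^{r-1}$ (or continuous) in $\varepsilon$; the proof needs the spectral gap to control the derivatives of the iterates. This is exactly where we would rely on \cite[Lemma~6]{GST2008} rather than redo the estimates. We would first check that its hypotheses hold uniformly for $\varepsilon$ near $\varepsilon^*$: the fixed point is hyperbolic with simple real leading multipliers $\lambda_\ell,\gamma_\ell$, and the $u$-index is $1$. Both are open conditions.
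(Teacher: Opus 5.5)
The paper does not construct these coordinates. It imports the normal form from \cite[Lemma~6]{GST2008} and records the parameter regularity in Remark~\ref{rem-ALCdetails}. Insofar as your plan is to check that lemma's hypotheses (hyperbolicity, simple real leading multipliers, $u$-index one) uniformly near $\varepsilon^*$ and then quote it, you are doing what the paper does. The self-contained sketch you wrap around the citation, however, does not produce $C^r$ coordinates, and the failure is in Step~3, where you assert regularity instead of deferring it.

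\textbf{The extended unstable manifold.} An extended unstable manifold tangent to $E_\ell^\lambda\oplus E_\ell^\gamma$ is not $C^r$ in general. Written as a graph $u_\ell=w(x_\ell,y_\ell)$, it is $C^k$ only under a gap condition of the type $\max_m|\lambda_{\ell,m}|<|\lambda_\ell|^k$. At a resonance such as $\lambda_{\ell,2}=\lambda_\ell^k$ one already gets terms like $x^k\log|x|$. In general it is only $C^{1+\epsilon}$, so straightening it destroys $C^r$ smoothness. What you actually need is only its tangent plane field along $W^u_\mathrm{loc}(O_\ell)$. That field is unique (this is why the paper stresses that all such manifolds are tangent along the local unstable manifold), and it is $C^{r-1}$ in $y_\ell$.

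\textbf{The cohomological equations.} Their data, $g(y)=\partial_{x_\ell}\mathsf{P}_1^{(\ell)}(0,y,0)$ and $h(x,u)=\partial_{y_\ell}\mathsf{P}_2^{(\ell)}(x,0,u)$, are only $C^{r-1}$. The series solutions are exactly as smooth as the data: termwise differentiation produces factors $\gamma_\ell^{-km}$, which help convergence but not regularity. Since $\phi$ is unique once $\phi(0)=0$, no smoother choice exists. Hence $x\mapsto x(1+\phi(y))$ is only $C^{r-1}$, because $\partial_y^r(x\phi(y))=x\phi^{(r)}(y)$. The same holds for $y\mapsto y(1+\psi(x,u))$ and for the plane-field change $u\mapsto u-x\beta(y)$. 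So ``the same computation shows $\phi$ is $C^r$'' is false, and your construction gives at best $C^{r-1}$ coordinates. That is one derivative short of the statement and of what Remark~\ref{rem-ALCdetails} and Lemma~\ref{lem-ILMs} use later. (Your displayed equation is also only the linearization of $(\lambda_\ell+g(y))(1+\phi(\gamma_\ell y))=\lambda_\ell(1+\phi(y))$, with a sign slip relative to your series; this is harmless.)

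\textbf{How to repair it.} Every identity in the lemma involves only the $0$-jet and the first normal jet of the coordinate change along $W^u_\mathrm{loc}$ and $W^s_\mathrm{loc}$. You may therefore replace $x\phi(y)$ by any $C^r$ function with the same jets. One choice is $x\int\phi(y+xs)\rho(s)\,ds-x\int\phi(xs)\rho(s)\,ds$, with $\rho$ smooth, compactly supported and $\int\rho=1$. This function is $C^r$ when $\phi\in C^{r-1}$, vanishes on $\{x=0\}$ and on $\{y=0\}$, and has $x$-derivative $\phi(y)$ along $\{x=0\}$. Smoothing $y\psi(x,u)$ and $x\beta(y)$ in the same way closes the gap.
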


We call such coordinates an \emph{approximately linearized coordinate system}. In these coordinates, $x_\ell$ is weak stable, $y_\ell$ is unstable, and $u_\ell$ is the strong stable coordinate.

\begin{remark}\leavevmode
\label{rem-ALCdetails}
\begin{enumerate}
\item[(1)] In an approximately linearized coordinate system,
\[
W^u_\mathrm{loc}(O_\ell(\varepsilon)) = \{x_\ell = 0,\ u_\ell = 0\},
\qquad
W^s_\mathrm{loc}(O_\ell(\varepsilon)) = \{y_\ell = 0\}.
\]
Inside $W^s_\mathrm{loc}(O_\ell(\varepsilon))$, the local strong stable leaves are the slices
\[
\{x_\ell = \mathrm{const},\ y_\ell = 0\}.
\]
\item[(2)] If $(x_\ell^*,y_\ell^*,u_\ell^*)$ is any $C^r$ coordinate system on $U_\ell^*$ that does not depend on $\varepsilon$, then the coordinate change
\[
(x_\ell,y_\ell,u_\ell,\varepsilon) \longmapsto (x_\ell^*,y_\ell^*,u_\ell^*)
\]
and its first and second partial derivatives with respect to $(x_\ell,y_\ell,u_\ell)$ are $C^{r-2}$ in $(x_\ell,y_\ell,u_\ell,\varepsilon)$; see the remarks following \cite[Lemma~6]{GST2008}.
\item[(3)] Consequently, each $\mathsf{P}_\imath^{(\ell)}$ is $C^{r-2}$ in $(x_\ell,y_\ell,u_\ell,\varepsilon)$, and the same holds for its first and second partial derivatives with respect to $(x_\ell,y_\ell,u_\ell)$; in particular, $\mathsf{P}_\imath^{(\ell)}(\cdot, \cdot, \cdot, \varepsilon)$ is $C^r$ for fixed $\varepsilon$. Indeed, if $\psi$ is an $\varepsilon$-independent chart and $\varphi_\varepsilon$ is the chart from Lemma~\ref{lem-ALC}, then
\[
\varphi_\varepsilon \circ T_\ell \circ \varphi_\varepsilon^{-1}
=
(\varphi_\varepsilon \circ \psi^{-1})
\circ
(\psi \circ T_\ell \circ \psi^{-1})
\circ
(\varphi_\varepsilon \circ \psi^{-1})^{-1}.
\]
The middle factor is $C^r$, and item~(2) controls the two coordinate changes. Subtracting the linear part gives the claim.
\end{enumerate}
\end{remark}

\begin{convention}
Fix approximately linearized coordinate systems on $U_1^*$ and $U_2^*$, and write
\[
\mathfrak{F}_\mathrm{obj}
:=
\bigl(
f,\Gamma^*,(U_1^*;x_1,y_1,u_1),(U_2^*;x_2,y_2,u_2),
M_{1,\mathrm{in}}^*,M_{1,\mathrm{out}}^*,M_{2,\mathrm{in}}^*,M_{2,\mathrm{out}}^*,
V_1^*, V_2^*,
\{f_\varepsilon\}_{\varepsilon \in R_\mathrm{prm}^*}
\bigr).
\]
Unless stated otherwise, whenever
\begin{itemize}
\item we shrink $R_\mathrm{prm}$ or $\delta_\mathrm{dom}$ (see Section~\ref{sec-dom}) or
\item enlarge $\kappa_0$ (see the next section),
\end{itemize}
the new quantities depend only on $\mathfrak{F}_\mathrm{obj}$.
\end{convention}

\subsubsection{Representation of the iterated local maps}

Fix $\ell \in \{1,2\}$ and $k \in \mathbb{Z}_{>0}$. Consider an orbit segment
\[
(x_{\ell m},y_{\ell m},u_{\ell m}) = T_\ell^m(x_{\ell 0},y_{\ell 0},u_{\ell 0}),
\qquad
0 \le m \le k,
\]
that stays in $U_\ell^*$. Consider
\[
(x_{\ell 0},y_{\ell k},u_{\ell 0})
\]
for such an orbit segment. Let $\mathcal{A}_{\ell,k}(\varepsilon)$ be the set of all such triples. Set
\[
\widehat{\mathcal{A}}_{\ell,k}
:=
\{(x_{\ell 0},y_{\ell k},u_{\ell 0},\varepsilon) : \varepsilon \in R_\mathrm{prm},\ (x_{\ell 0},y_{\ell k},u_{\ell 0}) \in \mathcal{A}_{\ell,k}(\varepsilon)\}.
\]

\begin{lemma}[{\cite[Lemma~7]{GST2008}, adapted}]
\label{lem-ILMs}
After shrinking $R_\mathrm{prm}$ if necessary, there exist $\kappa_0 = \kappa_0(\mathfrak{F}_\mathrm{obj}) \in \mathbb{Z}_{>0}$, $\widehat{\lambda}_1$, $\widehat{\lambda}_2$, $\widehat{\gamma}_1$, and $\widehat{\gamma}_2$ such that, for every $k > \kappa_0$, every $\ell \in \{1,2\}$, and every $\varepsilon \in R_\mathrm{prm}$, every orbit segment of length $k$ in $U_\ell^*$ satisfies
\begin{align*}
x_{\ell k}
&=
\lambda_\ell^k x_{\ell 0}
+
\widehat{\lambda}_\ell^k
\mathsf{B}_{1,k}^{(\ell)}(x_{\ell 0},y_{\ell k},u_{\ell 0},\varepsilon), \\
y_{\ell 0}
&=
\gamma_\ell^{-k} y_{\ell k}
+
\widehat{\gamma}_\ell^{-k}
\mathsf{B}_{2,k}^{(\ell)}(x_{\ell 0},y_{\ell k},u_{\ell 0},\varepsilon), \\
u_{\ell k}
&=
\widehat{\lambda}_\ell^k
\mathsf{B}_{3,k}^{(\ell)}(x_{\ell 0},y_{\ell k},u_{\ell 0},\varepsilon).
\end{align*}
Here each $\mathsf{B}_{\imath,k}^{(\ell)}$ ($\imath \in \{1, 2, 3\}$) is defined on $\widehat{\mathcal{A}}_{\ell,k}$; each $\mathsf{B}_{\imath,k}^{(\ell)}$ is $C^{r-2}$ in $(x_{\ell 0},y_{\ell k},u_{\ell 0},\varepsilon)$, and the same holds for its first and second partial derivatives with respect to $(x_{\ell 0},y_{\ell k},u_{\ell 0})$ (In particular, $\mathsf{B}_{\imath,k}^{(\ell)}(\cdot, \cdot, \cdot, \varepsilon)$ is $C^r$ for fixed $\varepsilon$); for any finite integers $s$ and $l$ with $2 \le s \le r$ and $0 \le l \le r$, the norms
\[
\|\mathsf{B}_{\imath,k}^{(\ell)}\|_{C^{s-2}(\widehat{\mathcal{A}}_{\ell,k})}, \qquad \|\mathsf{B}_{\imath,k}^{(\ell)}(\cdot, \cdot, \cdot, \varepsilon)\|_{C^{l}(\mathcal{A}_{\ell,k}(\varepsilon))}
\]
are bounded uniformly in $k$ and $\varepsilon$; Moreover,
\[
\widehat{\lambda}_\ell < |\lambda_\ell(\varepsilon)|,
\qquad
\widehat{\gamma}_\ell > |\gamma_\ell(\varepsilon)|
\]
for every $\varepsilon \in R_\mathrm{prm}$.
\end{lemma}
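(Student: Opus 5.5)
The plan is to treat each $\ell \in \{1,2\}$ separately and to follow the proof of \cite[Lemma~7]{GST2008}. The estimates come from the boundary-value (Shilnikov) formulation of the iterated map in the approximately linearized coordinates of Lemma~\ref{lem-ALC}. The unknown orbit segment $\{(x_{\ell m},y_{\ell m},u_{\ell m})\}_{m=0}^k$ is a fixed point of the integral-type operator
\begin{align*}
x_{\ell m} &= \lambda_\ell^m x_{\ell 0} + \sum_{j=0}^{m-1}\lambda_\ell^{m-1-j}\mathsf{P}_1^{(\ell)}(x_{\ell j},y_{\ell j},u_{\ell j},\varepsilon),\\
u_{\ell m} &= \mathbf{A}_\ell^m u_{\ell 0} + \sum_{j=0}^{m-1}\mathbf{A}_\ell^{m-1-j}\mathsf{P}_3^{(\ell)}(\cdots),\\
y_{\ell m} &= \gamma_\ell^{-(k-m)} y_{\ell k} - \sum_{j=m}^{k-1}\gamma_\ell^{-(j+1-m)}\mathsf{P}_2^{(\ell)}(\cdots),
\end{align*}
with prescribed data $(x_{\ell 0},y_{\ell k},u_{\ell 0})$. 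After shrinking $U_\ell^*$ and $R_\mathrm{prm}$, the nonlinearities $\mathsf{P}_\imath^{(\ell)}$ have small $C^1$ norm. The operator is then a contraction on sequences in the weighted norm $\sup_m\bigl(|x_{\ell m}|+\|u_{\ell m}\|+|y_{\ell m}|\bigr)$. This gives a unique solution, $C^r$ in the data for fixed $\varepsilon$, and, by the fiber contraction theorem applied to the derivative equations, $C^{r-2}$ jointly in $\varepsilon$ together with its first two data-derivatives. The loss of two derivatives comes from Remark~\ref{rem-ALCdetails}~(3).

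The main step is the sharp exponential estimates. Fix $\widehat{\lambda}_\ell$ strictly between $\max\{\lambda_\ell^2,\|\mathbf{A}_\ell\|\text{-rate}\}$-type quantities and $|\lambda_\ell|$, and $\widehat{\gamma}_\ell$ slightly larger than $|\gamma_\ell|$, uniformly on $R_\mathrm{prm}$. I would then exploit the identities of Lemma~\ref{lem-ALC}.
\begin{itemize}
\item $\mathsf{P}_1^{(\ell)}$ and $\mathsf{P}_2^{(\ell)}$ vanish on $\{y=0\}$.
\item $\mathsf{P}_\imath^{(\ell)}$ vanish on $\{x=0,u=0\}$.
\item $\partial_x\mathsf{P}_1^{(\ell)}=\partial_x\mathsf{P}_3^{(\ell)}=0$ on the unstable manifold.
\item $\partial_y\mathsf{P}_2^{(\ell)}=0$ on the stable manifold.
\end{itemize}
These give $\mathsf{P}_1^{(\ell)}=O\bigl(|y|(|x|^2+\|u\|)\bigr)$-type bounds. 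Along the orbit, $|y_{\ell m}|\lesssim|\gamma_\ell|^{-(k-m)}$ and $|x_{\ell m}|+\|u_{\ell m}\|\lesssim|\lambda_\ell|^m$. The sums in the $x$-equation are then bounded by $\sum_j|\lambda_\ell|^{k-1-j}\,|\gamma_\ell|^{-(k-j)}\,|\lambda_\ell|^{j}\cdots$. These are geometrically dominated by $\widehat{\lambda}_\ell^k$, using $|\gamma_\ell|>1$ and the strong contraction of $u$. The $u$-term is bounded directly by $\widehat{\lambda}_\ell^k$. For $y_{\ell 0}$, the identity $\partial_y\mathsf{P}_2^{(\ell)}=0$ on $\{y=0\}$ makes $\mathsf{P}_2^{(\ell)}=O(|y|^2\cdot\ldots)$ or $O(|y|\,(|x|+\|u\|))$. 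This gives the $\widehat{\gamma}_\ell^{-k}$ remainder.

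To obtain the uniform $C^l$ and $C^{s-2}$ bounds on $\mathsf{B}_{\imath,k}^{(\ell)}$, I define
\[
\mathsf{B}_{1,k}^{(\ell)}:=\widehat{\lambda}_\ell^{-k}\bigl(x_{\ell k}-\lambda_\ell^kx_{\ell 0}\bigr),
\]
and similarly for $\mathsf{B}_{2,k}^{(\ell)}$ and $\mathsf{B}_{3,k}^{(\ell)}$. I then differentiate the fixed-point equation inductively in the data and in $\varepsilon$. Each derivative of the solution satisfies a linear equation of the same contracting form with an inhomogeneous term built from lower derivatives. Induction on the order shows each derivative obeys the same exponential weights with a slightly worse but still admissible rate. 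One absorbs polynomial factors $k^p$ from differentiating $\lambda_\ell^k$ in $\varepsilon$ by the gap between $\widehat{\lambda}_\ell$ and $|\lambda_\ell|$, and chooses $\kappa_0$ large.

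I expect the main obstacle to be this inductive derivative bookkeeping, especially the $\varepsilon$-derivatives. Those are only $C^{r-2}$, and differentiating $\lambda_\ell(\varepsilon)^k$ produces factors growing in $k$. I would handle this by fixing the rate gaps first, then taking $\kappa_0$ so that $k^{s}(\widehat{\lambda}_\ell/|\lambda_\ell|)^{-k}$-type losses stay bounded.
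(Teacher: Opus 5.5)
Your route is the standard proof behind \cite[Lemma~7]{GST2008}, which is all the paper invokes here. It uses the Shilnikov boundary-value problem with data $(x_{\ell 0},y_{\ell k},u_{\ell 0})$, variation of constants with the exact factors $\lambda_\ell^{k-1-j}$ and $\gamma_\ell^{-(j+1)}$, the identities of Lemma~\ref{lem-ALC}, and induction on derivatives. The $C^0$ estimates and the joint $C^{s-2}$ bounds do come out of this. The joint bounds work because the first and second phase derivatives of $\mathsf{P}_\imath^{(\ell)}$ are $C^{r-2}$, which is exactly what the second-order Taylor steps need. Three repairs are needed along the way.
\begin{itemize}
\item In the $y$-equation you need the product bound $\mathsf{P}_2^{(\ell)}=O\bigl(|y|^2(|x|+\|u\|)\bigr)$. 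With $O(|y|(|x|+\|u\|))$ alone, the $j=0$ term of $\sum_j\gamma_\ell^{-(j+1)}\mathsf{P}_2^{(\ell)}$ is comparable to $|\gamma_\ell|^{-k}$. With $O(|y|^2)$ alone, the $j=k-1$ term is.
\item The factors $k^p$ must be absorbed by the gap between the true remainder rate and $\widehat{\lambda}_\ell$. That rate is a maximum of $\lambda_\ell^2$, $|\lambda_\ell/\gamma_\ell|$, the spectral radius of $\mathbf{A}_\ell$, and similar quantities. They cannot be absorbed by the gap between $\widehat{\lambda}_\ell$ and $|\lambda_\ell|$, since $k^s(|\lambda_\ell|/\widehat{\lambda}_\ell)^k$ is unbounded.
\item The rate must not worsen from order to order. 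For $r=\infty$ a single pair $\widehat{\lambda}_\ell,\widehat{\gamma}_\ell$ has to serve every $l$; only the constants may depend on $l$.
\end{itemize}

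The genuine gap is the fixed-$\varepsilon$ bound on $\|\mathsf{B}_{\imath,k}^{(\ell)}(\cdot,\cdot,\cdot,\varepsilon)\|_{C^l}$ for $l\in\{r-1,r\}$ when $r<\infty$. In your induction, $\partial_{y_{\ell k}}^n x_{\ell k}$ contains the last-step term $\gamma_\ell^{-n}\partial_y^n\mathsf{P}_1^{(\ell)}(x_{\ell,k-1},y_{\ell,k-1},u_{\ell,k-1})$, where $y_{\ell,k-1}$ is of order one. This term is $O(\widehat{\lambda}_\ell^k)$ only if $\partial_y^n\mathsf{P}_1^{(\ell)}=O(|x|^2+\|u\|)$. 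From $\partial_y^n\mathsf{P}_1^{(\ell)}(0,y,0)=\partial_x\partial_y^n\mathsf{P}_1^{(\ell)}(0,y,0)=0$ you get that only when $\mathsf{P}_1^{(\ell)}$ is $C^{n+2}$. At $n=r-1$ you get only $o(|x|)+O(\|u\|)$, and at $n=r$ only $o(1)$. These give terms of size $o(|\lambda_\ell|^k)$ and $o(1)$, not $O(\widehat{\lambda}_\ell^k)$.

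This is not a bookkeeping artefact. Set $\mathsf{P}_2^{(\ell)}=\mathsf{P}_3^{(\ell)}=0$ and $\mathsf{P}_1^{(\ell)}=|x|^r\omega(|x|)\,\beta\bigl((y-y_*)/|x|\bigr)$. Here $\omega(t)=1/\log(1/t)$, $\beta$ is a bump supported in $(-1,1)$ with $\beta^{(r-1)}(0)\beta^{(r)}(0)\neq 0$, and $y_*\neq 0$.
\begin{itemize}
\item This $\mathsf{P}_1^{(\ell)}$ is $C^r$, since every derivative of order $m\le r$ is $O(|x|^{r-m}\omega(|x|))$.
\item It satisfies every identity of Lemma~\ref{lem-ALC}.
\item For $y_{\ell k}$ near $\gamma_\ell y_*$, only the last step is nonlinear.
\item At $y_{\ell k}=\gamma_\ell y_*$ one gets $\partial_{y_{\ell k}}^r x_{\ell k}=\gamma_\ell^{-r}\beta^{(r)}(0)\,\omega(|\lambda_\ell^{k-1}x_{\ell 0}|)\sim c/k$.
\end{itemize}
Hence $\widehat{\lambda}_\ell^{-k}\partial_{y_{\ell k}}^r x_{\ell k}\to\infty$. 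The same computation at order $r-1$ gives growth like $(|\lambda_\ell|/\widehat{\lambda}_\ell)^k/k$.

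So your method, like \cite[Lemma~7]{GST2008} itself, gives uniform bounds for phase derivatives only up to order $r-2$, or every order when $r=\infty$. The range $0\le l\le r$ in the statement cannot be reached this way, and by the example above it is false in general for finite $r$.
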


\begin{remark}
\label{rem-hatssm}
Lemma~\ref{lem-ILMs} remains valid with $\widehat{\lambda}_\ell \, (< |\lambda_\ell^*|)$ replaced by a larger one $\widehat{\widehat{\lambda}}_\ell \, (< |\lambda_\ell^*|)$ and $\widehat{\gamma}_\ell \, (> |\gamma_\ell^*|)$ replaced by a smaller one $\widehat{\widehat{\gamma}}_\ell \, (> |\gamma_\ell^*|)$ ($\mathsf{B}_{\imath,k}^{(\ell)}$ may be changed, but it is still bounded). We sometimes replace them and rewrite $\widehat{\lambda}_\ell$ and $\widehat{\gamma}_\ell$ again. Set
\[
\lambda_\mathrm{max}(\varepsilon)
:=
\max\{|\lambda_1(\varepsilon)|,|\lambda_2(\varepsilon)|,|\gamma_1(\varepsilon)^{-1}|,|\gamma_2(\varepsilon)^{-1}|\}.
\]
Enlarging $\widehat{\lambda}_\ell$, reducing $\widehat{\gamma}_\ell$, and shrinking $R_\mathrm{prm}$, we may assume that
\begin{equation}
\label{eq-hineq}
|\lambda_1| \lambda_\mathrm{max} < \widehat{\lambda}_1,\quad
|\lambda_2| \lambda_\mathrm{max} < \widehat{\lambda}_2,\quad
|\gamma_1^{-1}| \lambda_\mathrm{max} < \widehat{\gamma}_1^{-1},\quad
|\gamma_2^{-1}| \lambda_\mathrm{max} < \widehat{\gamma}_2^{-1}
\end{equation}
for every $\varepsilon \in R_\mathrm{prm}$, where $\lambda_\ell = \lambda_\ell(\varepsilon)$ and $\gamma_\ell = \gamma_\ell(\varepsilon)$.
\end{remark}
\subsection{Global maps}

\subsubsection{Continuations of base points}

For $\varepsilon \in R_\mathrm{prm}$, we define the global maps by
\begin{align*}
T_{1 \to 2} = T_{1 \to 2}(\varepsilon; \mathfrak{F}_\mathrm{obj}) := f_\varepsilon^{N_{1 \to 2}}|_{V_1^*},
\qquad
T_{2 \to 1} = T_{2 \to 1}(\varepsilon; \mathfrak{F}_\mathrm{obj}) := f_\varepsilon^{N_{2 \to 1}}|_{V_2^*}.
\end{align*}
After shrinking $R_\mathrm{prm}$ if necessary, $T_{1 \to 2}(V_1^*) \subset U_2^*$ and $T_{2 \to 1}(V_2^*) \subset U_1^*$ for $\varepsilon \in R_\mathrm{prm}$. In approximately linearized coordinates, their coordinate expressions on these neighborhoods and their first and second partial derivatives with respect to $(x_1,y_1,u_1)$ or $(x_2,y_2,u_2)$ are $C^{r-2}$ in these variables and $\varepsilon$; see Remark~\ref{rem-ALCdetails}~(2), (3).

For $y_2$ near the $y_2$-coordinate $y_2^*$ of $M_{2,\mathrm{out}}^*$, write
\[
\Phi(y_2,\varepsilon)
:=
\operatorname{pr}_{y_1}\bigl(T_{2 \to 1}(0,y_2,0)\bigr).
\]

\begin{lemma}
\label{lem-Ms}
After shrinking $R_\mathrm{prm}$ if necessary, there exist unique families of points
\[
M_{1,\mathrm{in}}(\varepsilon),\ M_{1,\mathrm{out}}(\varepsilon) \in U_1^*,
\qquad
M_{2,\mathrm{in}}(\varepsilon),\ M_{2,\mathrm{out}}(\varepsilon) \in U_2^*
\]
of class $C^{r-2}$ in $\varepsilon$ such that:
\begin{itemize}
\item $M_{1,\mathrm{out}}(\varepsilon) \in W^u_\mathrm{loc}(O_1(\varepsilon))$,
\[
M_{2,\mathrm{in}}(\varepsilon) = T_{1 \to 2}(M_{1,\mathrm{out}}(\varepsilon)) \in W^s_\mathrm{loc}(O_2(\varepsilon)),
\]
and
\[
M_{1,\mathrm{out}} = (0,y_1^\mathrm{out},0),
\qquad
M_{2,\mathrm{in}} = (x_2^\mathrm{in},0,u_2^\mathrm{in}).
\]
\item The function $\Phi(\cdot,\varepsilon)$ has a unique critical point $y_2^\mathrm{out}(\varepsilon)$ near $y_2^*$. Set
\[
M_{2,\mathrm{out}}(\varepsilon) := (0,y_2^\mathrm{out}(\varepsilon),0),
\qquad
T_{2 \to 1}(M_{2,\mathrm{out}}(\varepsilon)) = (x_1^\mathrm{in}(\varepsilon),y_1^\mathrm{in}(\varepsilon),u_1^\mathrm{in}(\varepsilon)),
\]
and define
\[
M_{1,\mathrm{in}}(\varepsilon) := (x_1^\mathrm{in}(\varepsilon),0,u_1^\mathrm{in}(\varepsilon)).
\]
Then
\[
y_1^\mathrm{in}(\varepsilon) = \Phi(y_2^\mathrm{out}(\varepsilon),\varepsilon).
\]
\item At $\varepsilon = \varepsilon^*$,
\[
M_{1,\mathrm{in}} = M_{1,\mathrm{in}}^*,
\quad
M_{1,\mathrm{out}} = M_{1,\mathrm{out}}^*,
\quad
M_{2,\mathrm{in}} = M_{2,\mathrm{in}}^*,
\quad
M_{2,\mathrm{out}} = M_{2,\mathrm{out}}^*.
\]
\end{itemize}
\end{lemma}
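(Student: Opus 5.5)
The plan is to build each of the four points by an implicit function argument. Everything takes place in the approximately linearized coordinates of Lemma~\ref{lem-ALC}. In these coordinates the local invariant manifolds are coordinate subspaces for every $\varepsilon$ (Remark~\ref{rem-ALCdetails}~(1)), and the coordinate expressions of $T_{1\to2}$, $T_{2\to1}$, together with their first and second spatial derivatives, are $C^{r-2}$ jointly in space and $\varepsilon$.

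\emph{The transverse connection.} First, $M_{1,\mathrm{out}}(\varepsilon)$ is sought on $W^u_\mathrm{loc}(O_1(\varepsilon)) = \{x_1=0,u_1=0\}$, so it has the form $(0,y_1,0)$. We need $T_{1\to2}(0,y_1,0)\in\{y_2=0\}$, that is,
\[
G(y_1,\varepsilon):=\operatorname{pr}_{y_2}T_{1\to2}(0,y_1,0)=0 .
\]
At $\varepsilon^*$ we have $G(y_1^*,\varepsilon^*)=0$ because $M^*_{1,\mathrm{out}}$ lies on the connection. We also have $\partial_{y_1}G(y_1^*,\varepsilon^*)\neq0$. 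This is where transversality enters: the one-dimensional $W^u(\mathrm{Orb}(O_1^*))$ crosses the hypersurface $W^s(\mathrm{Orb}(O_2^*))=\{y_2=0\}$ transversally, so its image curve is not tangent to $\{y_2=0\}$. (This is also implied by (P2).) Since $G$ and $\partial_{y_1}G$ are $C^{r-2}$, the implicit function theorem gives a unique $C^{r-2}$ solution $y_1^\mathrm{out}(\varepsilon)$ near $y_1^*$. We then set $M_{2,\mathrm{in}}:=T_{1\to2}(M_{1,\mathrm{out}})$. Its $y_2$-component vanishes, so it has the form $(x_2^\mathrm{in},0,u_2^\mathrm{in})$ and is $C^{r-2}$.

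\emph{The tangency.} Next we apply the same theorem to $\partial_{y_2}\Phi(y_2,\varepsilon)=0$. By (P1), after identifying $\zeta(t)=(0,y_2,0)$ and taking $h=y_1$ as a defining function of $W^s_\mathrm{loc}(O_1)=\{y_1=0\}$ (transported by the global map), the quadratic tangency means $\partial_{y_2}\Phi(y_2^*,\varepsilon^*)=0$ and $\partial^2_{y_2}\Phi(y_2^*,\varepsilon^*)\neq0$. The function $\partial_{y_2}\Phi$ is a first spatial derivative of the coordinate expression of $T_{2\to1}$, so it is $C^{r-2}$. Its $y_2$-derivative is a second spatial derivative and is therefore also $C^{r-2}$, so the $C^1$ implicit function theorem applies. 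This gives a unique $C^{r-2}$ critical point $y_2^\mathrm{out}(\varepsilon)$ near $y_2^*$, with uniqueness in a fixed neighborhood after shrinking $R_\mathrm{prm}$.

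\emph{The remaining points.} We then define $M_{2,\mathrm{out}}$, $T_{2\to1}(M_{2,\mathrm{out}})$ and $M_{1,\mathrm{in}}$ exactly as in the statement. These are $C^{r-2}$ as compositions, and the identity $y_1^\mathrm{in}=\Phi(y_2^\mathrm{out},\varepsilon)$ holds by definition of $\Phi$. At $\varepsilon^*$, uniqueness of the implicit solutions forces $y_1^\mathrm{out}(\varepsilon^*)=y_1^*$ and $y_2^\mathrm{out}(\varepsilon^*)=y_2^*$. This yields the equalities with the starred base points. For $M_{1,\mathrm{in}}$ we additionally use $y_1^\mathrm{in}(\varepsilon^*)=0$, which holds because the tangency point lies on $W^s$.

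\emph{Main obstacle.} The delicate step is the regularity bookkeeping for the tangency. Obtaining $C^{r-2}$ for $y_2^\mathrm{out}$ requires joint $C^{r-2}$ control of second spatial derivatives, as given by Remark~\ref{rem-ALCdetails}~(2),(3), rather than merely $C^{r-2}$ regularity of the maps themselves. A related point is checking that the quadratic-tangency condition (P1), stated with arbitrary $\zeta$ and $h$, transfers to nondegeneracy of $\partial^2_{y_2}\Phi$ in these $\varepsilon$-dependent coordinates. This transfer follows from the stated invariance of the condition under the choice of $\zeta$ and $h$.
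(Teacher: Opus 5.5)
Your proposal is correct and follows the paper's proof. The paper applies the implicit function theorem twice: first to continue the transverse intersection, with nondegeneracy $D_{12}\neq 0$ coming from transversality of the connection, and then to solve $\partial_{y_2}\Phi=0$, with nondegeneracy coming from \textup{(P1)} and Remark~\ref{rem-GCinv}; it then defines $M_{2,\mathrm{out}}$ and $M_{1,\mathrm{in}}$ directly, exactly as you do. Your explicit tracking of the joint $C^{r-2}$ regularity of $\partial_{y_2}\Phi$ is, if anything, more careful than the paper's bookkeeping.

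The one slip is your parenthetical claim that this transversality is also implied by \textup{(P2)}. Condition \textup{(P2)} only gives $J_{12}=A_{12}D_{12}-B_{12}C_{12}\neq 0$, which does not force $D_{12}\neq 0$. Your argument does not use that claim, since transversality of the connection is a hypothesis.
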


\begin{proof}
Since the heteroclinic connection from $\mathrm{Orb}(O_1^*)$ to $\mathrm{Orb}(O_2^*)$ is transverse at $M_{1 \to 2}^*$, after shrinking $R_\mathrm{prm}$ the manifolds $W^u(\mathrm{Orb}(O_1(\varepsilon)))$ and $W^s(\mathrm{Orb}(O_2(\varepsilon)))$ meet in a unique point near $M_{1 \to 2}^*$. Transporting that point along the orbit segment of length $N_{1 \to 2}$ defines unique points $M_{1,\mathrm{out}}(\varepsilon)$ and $M_{2,\mathrm{in}}(\varepsilon)$ such that
\[
M_{2,\mathrm{in}} = T_{1 \to 2}(M_{1,\mathrm{out}}).
\]
Remark~\ref{rem-ALCdetails}~(1) gives
\[
M_{1,\mathrm{out}} = (0,y_1^\mathrm{out},0),
\qquad
M_{2,\mathrm{in}} = (x_2^\mathrm{in},0,u_2^\mathrm{in}).
\]

For the branch through $M_{2,\mathrm{out}}^*$, $\Phi$ is $C^{r-2}$. By \textup{(P1)} and Remark~\ref{rem-GCinv},
\[
\partial_{y_2}\Phi(y_2^*,\varepsilon^*) = 0,
\qquad
\partial_{y_2}^2\Phi(y_2^*,\varepsilon^*) \neq 0.
\]
Hence the implicit function theorem yields a unique $C^{r-2}$ function $y_2^\mathrm{out}(\varepsilon)$ near $y_2^*$ such that
\[
\partial_{y_2}\Phi(y_2^\mathrm{out}(\varepsilon),\varepsilon) = 0.
\]
Set
\[
M_{2,\mathrm{out}}(\varepsilon) = (0,y_2^\mathrm{out}(\varepsilon),0),
\qquad
T_{2 \to 1}(M_{2,\mathrm{out}}(\varepsilon)) = (x_1^\mathrm{in}(\varepsilon),y_1^\mathrm{in}(\varepsilon),u_1^\mathrm{in}(\varepsilon)),
\]
and define
\[
M_{1,\mathrm{in}}(\varepsilon) := (x_1^\mathrm{in}(\varepsilon),0,u_1^\mathrm{in}(\varepsilon)).
\]
At $\varepsilon = \varepsilon^*$, the transverse intersection and the critical point are the starred ones, so
\[
M_{1,\mathrm{in}} = M_{1,\mathrm{in}}^*,
\quad
M_{1,\mathrm{out}} = M_{1,\mathrm{out}}^*,
\quad
M_{2,\mathrm{in}} = M_{2,\mathrm{in}}^*,
\quad
M_{2,\mathrm{out}} = M_{2,\mathrm{out}}^*.
\]
Thus the four families are of class $C^{r-2}$ in $\varepsilon$.
\end{proof}

% \begin{remark}
% For $\varepsilon \neq \varepsilon^*$, only $M_{1,\mathrm{out}}(\varepsilon)$ and $M_{2,\mathrm{in}}(\varepsilon)$ are genuine heteroclinic continuations. The points $M_{2,\mathrm{out}}(\varepsilon)$ and $M_{1,\mathrm{in}}(\varepsilon)$ are auxiliary normalization points defined from the curve $y_2 \longmapsto T_{2 \to 1}(0,y_2,0)$.
% \end{remark}

\subsubsection{Representation of the global maps}

Write
\[
T_{1 \to 2}(x_1,y_1,u_1) = (\overline{x}_2,\overline{y}_2,\overline{u}_2),
\qquad
T_{2 \to 1}(x_2,y_2,u_2) = (\overline{x}_1,\overline{y}_1,\overline{u}_1).
\]
At $M_{1,\mathrm{out}} = (0,y_1^\mathrm{out},0)$, define
\begin{align*}
A_{12} &= \left.\partial_{x_1}\overline{x}_2\right|_{M_{1,\mathrm{out}}}, &
B_{12} &= \left.\partial_{y_1}\overline{x}_2\right|_{M_{1,\mathrm{out}}}, &
C_{12} &= \left.\partial_{x_1}\overline{y}_2\right|_{M_{1,\mathrm{out}}}, &
D_{12} &= \left.\partial_{y_1}\overline{y}_2\right|_{M_{1,\mathrm{out}}}, \\
E_{12} &= \left.\partial_{x_1}\overline{u}_2\right|_{M_{1,\mathrm{out}}}, &
F_{12} &= \left.\partial_{y_1}\overline{u}_2\right|_{M_{1,\mathrm{out}}},
\end{align*}
and at $M_{2,\mathrm{out}} = (0,y_2^\mathrm{out},0)$ define
\begin{align*}
A_{21} &= \left.\partial_{x_2}\overline{x}_1\right|_{M_{2,\mathrm{out}}}, &
B_{21} &= \left.\partial_{y_2}\overline{x}_1\right|_{M_{2,\mathrm{out}}}, &
C_{21} &= \left.\partial_{x_2}\overline{y}_1\right|_{M_{2,\mathrm{out}}}, \\
E_{21} &= \left.\partial_{x_2}\overline{u}_1\right|_{M_{2,\mathrm{out}}}, &
F_{21} &= \left.\partial_{y_2}\overline{u}_1\right|_{M_{2,\mathrm{out}}}, &
G_{21} &= \frac12\left.\partial_{y_2}^2\overline{y}_1\right|_{M_{2,\mathrm{out}}}.
\end{align*}
Because $y_2^\mathrm{out}(\varepsilon)$ is a critical point of $\Phi(\cdot,\varepsilon)$,
\[
\left.\partial_{y_2}\overline{y}_1\right|_{M_{2,\mathrm{out}}} = 0.
\]
Set
\[
\mathbf{J}_{12}
:=
\begin{pmatrix}
A_{12} & B_{12} \\
C_{12} & D_{12}
\end{pmatrix},
\qquad
\mathbf{J}_{21}
:=
\begin{pmatrix}
A_{21} & B_{21} \\
C_{21} & 0
\end{pmatrix}
\]

By Taylor expansion at $M_{1,\mathrm{out}}$ and $M_{2,\mathrm{out}}$, together with Lemma~\ref{lem-Ms}, one obtains the following proposition.

\begin{proposition}
\label{prop-globalrepr}
For $(x_{1i},y_{1i},u_{1i}) \in V_1^*$ and $(\overline{x}_{2j},\overline{y}_{2j},\overline{u}_{2j}) \in V_2^*$, set
\[
(\overline{x}_{20},\overline{y}_{20},\overline{u}_{20})
:=
T_{1 \to 2}(x_{1i},y_{1i},u_{1i}),
\qquad
(\overline{\overline{x}}_{10},\overline{\overline{y}}_{10},\overline{\overline{u}}_{10})
:=
T_{2 \to 1}(\overline{x}_{2j},\overline{y}_{2j},\overline{u}_{2j}).
\]
Then the following truncated Taylor representations hold:
\begin{equation}
\label{eq-T1to2}
\begin{aligned}
\overline{x}_{20} - x_2^\mathrm{in}
&=
A_{12} x_{1i}
+
B_{12}(y_{1i} - y_1^\mathrm{out})
+
\mathsf{R}^{(1)}_1, \\
\overline{y}_{20}
&=
C_{12} x_{1i}
+
D_{12}(y_{1i} - y_1^\mathrm{out})
+
\mathsf{R}^{(1)}_2, \\
\overline{u}_{20} - u_2^\mathrm{in}
&=
E_{12} x_{1i}
+
F_{12}(y_{1i} - y_1^\mathrm{out})
+
\mathsf{R}^{(1)}_3,
\end{aligned}
\end{equation}
and
\begin{equation}
\label{eq-T2to1}
\begin{aligned}
\overline{\overline{x}}_{10} - x_1^\mathrm{in}
&=
A_{21}\overline{x}_{2j}
+
B_{21}(\overline{y}_{2j} - y_2^\mathrm{out})
+
\mathsf{R}^{(2)}_1, \\
\overline{\overline{y}}_{10}
&=
y_1^\mathrm{in}
+
C_{21}\overline{x}_{2j}
+
G_{21}(\overline{y}_{2j} - y_2^\mathrm{out})^2
+
\mathsf{R}^{(2)}_2, \\
\overline{\overline{u}}_{10} - u_1^\mathrm{in}
&=
E_{21}\overline{x}_{2j}
+
F_{21}(\overline{y}_{2j} - y_2^\mathrm{out})
+
\mathsf{R}^{(2)}_3,
\end{aligned}
\end{equation}
where
\begin{align*}
\mathsf{R}^{(1)}_\ell
&=
\mathsf{R}^{(1)}_\ell(x_{1i},y_{1i} - y_1^\mathrm{out},u_{1i},\varepsilon)
=
O\!\left(\|(x_{1i},y_{1i} - y_1^\mathrm{out})\|^2 + \|u_{1i}\|\right),
\qquad
\ell \in \{1,2,3\}, \\
\mathsf{R}^{(2)}_\imath
&=
\mathsf{R}^{(2)}_\imath(\overline{x}_{2j},\overline{y}_{2j} - y_2^\mathrm{out},\overline{u}_{2j},\varepsilon)
=
O\!\left(\|(\overline{x}_{2j},\overline{y}_{2j} - y_2^\mathrm{out})\|^2 + \|\overline{u}_{2j}\|\right),
\qquad
\imath \in \{1,3\}, \\
\mathsf{R}^{(2)}_2
&=
\mathsf{R}^{(2)}_2(\overline{x}_{2j},\overline{y}_{2j} - y_2^\mathrm{out},\overline{u}_{2j},\varepsilon) \\
&=
O\!\left(
|\overline{x}_{2j}|^2
+
|\overline{x}_{2j}(\overline{y}_{2j} - y_2^\mathrm{out})|
+
|\overline{y}_{2j} - y_2^\mathrm{out}|^3
+
\|\overline{u}_{2j}\|
\right).
\end{align*}
Here $\|\cdot\|$ denotes the Euclidean norm and all coefficients are $C^{r-2}$ in $\varepsilon$. Moreover, the remainder terms are $C^{r-2}$ and the same holds for their first and second partial derivatives with respect to the variables except for $\varepsilon$; in particular, they are $C^r$ for fixed $\varepsilon$.
\end{proposition}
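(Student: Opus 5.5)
The plan is to Taylor expand the coordinate expressions of $T_{1\to2}$ and $T_{2\to1}$, written in the approximately linearized coordinates of Lemma~\ref{lem-ALC}, about the base points $M_{1,\mathrm{out}}(\varepsilon)$ and $M_{2,\mathrm{out}}(\varepsilon)$ of Lemma~\ref{lem-Ms}. Remainders are handled with Taylor's formula in integral form, so that regularity in $\varepsilon$ is inherited from that of the coordinate expressions. By the discussion after the definition of $T_{1\to2},T_{2\to1}$ (relying on Remark~\ref{rem-ALCdetails}~(2),(3)), these expressions and their first and second partial derivatives in the space variables are $C^{r-2}$ jointly in space and $\varepsilon$, and are $C^r$ for fixed $\varepsilon$. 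Lemma~\ref{lem-Ms} gives that the base points are $C^{r-2}$ in $\varepsilon$. Hence the zeroth order values, namely $x_2^\mathrm{in},0,u_2^\mathrm{in}$ at $M_{1,\mathrm{out}}$ and $x_1^\mathrm{in},y_1^\mathrm{in},u_1^\mathrm{in}$ at $M_{2,\mathrm{out}}$, are $C^{r-2}$ in $\varepsilon$. The first derivatives $A_{12},\dots,F_{21}$ are also $C^{r-2}$, since they are first partials evaluated along a $C^{r-2}$ curve. Likewise $G_{21}$ is $C^{r-2}$, because the second partials are $C^{r-2}$ by assumption.

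For $T_{1\to2}$, I will expand to first order in $(x_1,y_1-y_1^\mathrm{out},u_1)$. The zeroth order term of $\overline{y}_2$ vanishes because $M_{2,\mathrm{in}}\in W^s_\mathrm{loc}=\{y_2=0\}$. The linear terms in $u_1$ are absorbed into $O(\|u_1\|)$, and the integral-form remainder gives the stated $O(\|(x,y-y^\mathrm{out})\|^2+\|u\|)$ bound. Concretely, I will write $g(x,y,u)-g(x,y,0)=\int_0^1\partial_u g(x,y,tu)\,dt\cdot u$. The first term $g(x,y,0)$ is then expanded to first order in $(x,y-y^\mathrm{out})$, with the second-order integral remainder $\int_0^1(1-t)D^2g\,dt$. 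These integral expressions show that the remainder is $C^{r-2}$ jointly. They also show that its first and second space derivatives are $C^{r-2}$, because differentiating under the integral only involves derivatives of $g$ of order at most two in space combined with smooth weights. I expect the count to work out here, though it must be checked carefully: it is the one place where the $C^{r-2}$ claims could fail.

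For $T_{2\to1}$, the $\overline{x}_1$ and $\overline{u}_1$ components are treated identically. For $\overline{y}_1$, write $g=\overline{y}_1$. First, I split off the $u$-dependence as above. Then I expand $g(x,y,0)$ in $x$ to first order: $g(0,y,0)+x\,\partial_xg(0,y,0)+O(x^2)$. Here $g(0,y,0)=\Phi(y,\varepsilon)$. I expand $\Phi$ about $y_2^\mathrm{out}$ to second order, using $\partial_y\Phi(y_2^\mathrm{out},\varepsilon)=0$, which is the critical-point property from Lemma~\ref{lem-Ms}. This leaves $y_1^\mathrm{in}+G_{21}(y-y_2^\mathrm{out})^2+O(|y-y_2^\mathrm{out}|^3)$. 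For the cubic remainder I need the third-order Taylor formula $\int_0^1\frac{(1-t)^2}{2}\partial_y^3\Phi\,dt$. Next, $\partial_xg(0,y,0)=C_{21}+O(|y-y_2^\mathrm{out}|)$, which produces the mixed $O(|x(y-y^\mathrm{out})|)$ term.

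The main obstacle is regularity in this third-order step. $\partial_y^3\Phi$ is only $C^{r-3}$ jointly, so the assertion that the remainder is $C^{r-2}$ cannot come from the integral formula directly. Instead, I would obtain it by defining $\mathsf{R}^{(2)}_2$ as the difference between $\overline{y}_1$ and the explicit polynomial terms. That difference is $C^{r-2}$, with $C^{r-2}$ first and second space derivatives, because each of its constituents is. The integral formula is then used only for the $O$-bound, which needs nothing beyond $C^3$ in space for fixed $\varepsilon$, and that holds since $r\ge5$. The same device of defining the remainder as a difference applies to all the remainders, which removes the concern raised above.
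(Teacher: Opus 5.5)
Your proposal is correct and follows the same route as the paper, which only says that the representation follows from Taylor expansion of the global maps at $M_{1,\mathrm{out}}$ and $M_{2,\mathrm{out}}$ together with Lemma~\ref{lem-Ms}, including the critical-point property that kills the linear $(\overline{y}_{2j}-y_2^\mathrm{out})$-term in $\overline{\overline{y}}_{10}$. Defining each remainder as the map minus its explicit polynomial part is the right way to get the $C^{r-2}$ claims. Note only that the $O$-constants must be uniform in $\varepsilon$: this comes from the joint continuity of the third space derivatives on a compact neighborhood (the second ones are $C^{r-2}$ with $r \ge 5$), not from $C^3$ regularity at each fixed $\varepsilon$ alone.
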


\begin{remark}
\label{rem-gcmeaning}
After shrinking $R_\mathrm{prm}$ if necessary, the geometric conditions admit the following algebraic interpretation.
\begin{itemize}
\item In approximately linearized coordinates, the strong stable direction is the $u_\ell$-axis, so $\mathbf{J}_{12}$ and $\mathbf{J}_{21}$ are the Jacobian matrices of the $(x_2,y_2)$- and $(x_1,y_1)$-components of $T_{1 \to 2}|_{W_1^{uE}}$ and $T_{2 \to 1}|_{W_2^{uE}}$.
\item Condition \textup{(P1)} implies $G_{21} \neq 0$. Indeed, the restriction of \eqref{eq-T2to1} to $(\overline{x}_{2j},\overline{u}_{2j}) = (0,0)$ is the Taylor expansion of $y_2 \longmapsto \operatorname{pr}_{y_1}(T_{2 \to 1}(0,y_2,0))$ at its critical point, and quadratic tangency means that its quadratic coefficient is nonzero.
\item Condition \textup{(P2)} is equivalent to
\[
J_{12} := \det \mathbf{J}_{12} = A_{12}D_{12} - B_{12}C_{12} \neq 0,
\]
because $T_{1 \to 2}(W_1^{uE})$ is transverse to the strong stable leaf at $M_{2,\mathrm{in}}$ exactly when the Jacobian matrix of the $(x_2,y_2)$-component of $T_{1 \to 2}|_{W_1^{uE}}$ is invertible.
\item Condition \textup{(P3)} is equivalent to $B_{21}C_{21} \neq 0$, because
\[
\det \mathbf{J}_{21} = -B_{21}C_{21},
\]
and the same argument applies at $M_{1,\mathrm{in}}$.
\item The transverse intersection on the heteroclinic connection from $\mathrm{Orb}(O_1)$ to $\mathrm{Orb}(O_2)$ implies $D_{12} \neq 0$, since along the local unstable branch $\{x_1 = 0,\ u_1 = 0\}$ the target stable manifold is $\{y_2 = 0\}$.
\end{itemize}
\end{remark}

\begin{convention}
\label{conv-repmu1}
Restricting \eqref{eq-T2to1} to $(\overline{x}_{2j},\overline{u}_{2j}) = (0,0)$ gives
\[
\Phi(y_2,\varepsilon)
=
y_1^\mathrm{in}(\varepsilon)
+
G_{21}(\varepsilon)(y_2 - y_2^\mathrm{out}(\varepsilon))^2
+
O\bigl(|y_2 - y_2^\mathrm{out}(\varepsilon)|^3\bigr).
\]
Since $G_{21}(\varepsilon) \neq 0$, the tangency condition is $y_1^\mathrm{in}(\varepsilon) = 0$. After shrinking $R_\mathrm{prm}$ if necessary, we use $(y_1^\mathrm{in},\mu_2)$ as a parameter and rename $y_1^\mathrm{in}$ as $\mu_1$.
\end{convention}
\subsection{First-return maps}

\subsubsection{The definition of the first-return maps}
For $i,j > \kappa_0$, let
\begin{equation}
\label{eq-5pts}
\begin{aligned}
&(x_{10},y_{10},u_{10}),\ (\overline{\overline{x}}_{10},\overline{\overline{y}}_{10},\overline{\overline{u}}_{10}) \in U_1^*, \quad (x_{1i},y_{1i},u_{1i}) \in V_1^* \\
&(\overline{x}_{20},\overline{y}_{20},\overline{u}_{20}) \in U_2^*, \quad (\overline{x}_{2j},\overline{y}_{2j},\overline{u}_{2j}) \in V_2^*
\end{aligned}
\end{equation}
and assume
\begin{equation}
\label{eq-5ptseq}
\begin{aligned}
(x_{1m},y_{1m},u_{1m})
&=
T_1^m(x_{10},y_{10},u_{10}) \in U_1^*, \qquad 0 \le m \le i, \\
(\overline{x}_{20},\overline{y}_{20},\overline{u}_{20})
&=
T_{1 \to 2}(x_{1i},y_{1i},u_{1i}), \\
(\overline{x}_{2m},\overline{y}_{2m},\overline{u}_{2m})
&=
T_2^m(\overline{x}_{20},\overline{y}_{20},\overline{u}_{20}) \in U_2^* \qquad 0 \le m \le j, \\
(\overline{\overline{x}}_{10},\overline{\overline{y}}_{10},\overline{\overline{u}}_{10})
&=
T_{2 \to 1}(\overline{x}_{2j},\overline{y}_{2j},\overline{u}_{2j}).
\end{aligned}
\end{equation}

\begin{definition}
We set
\[
\mathrm{Dom}(T_{ij})
:=
\{
(x_{10},y_{10},u_{10}) \in U_1^*
\mid
\text{there exist points satisfying \eqref{eq-5pts} and \eqref{eq-5ptseq}}
\}.
\]
For $(x_{10},y_{10},u_{10}) \in \mathrm{Dom}(T_{ij})$, define
\[
T_{ij}(x_{10},y_{10},u_{10})
:=
(\overline{\overline{x}}_{10},\overline{\overline{y}}_{10},\overline{\overline{u}}_{10}).
\]
Equivalently,
\[
T_{ij} = T_{2 \to 1} \circ T_2^j \circ T_{1 \to 2} \circ T_1^i.
\]
\end{definition}

\subsubsection{Existence of the domains of the first-return maps}
\label{sec-dom}

For $\delta_\mathrm{dom} > 0$ and $\ell \in \{1,2\}$, set
\begin{align*}
B_\ell^\mathrm{in}
&:=
\{
(x_\ell,u_\ell)
\mid
|x_\ell - x_\ell^\mathrm{in}| \le \delta_\mathrm{dom},
\|u_\ell - u_\ell^\mathrm{in}\| \le \delta_\mathrm{dom}
\},
\\
I_\ell^\mathrm{in}
&:=
\{
y_\ell
\mid
|y_\ell| \le \delta_\mathrm{dom}
\},
\\
B_\ell^\mathrm{out}
&:=
\{
(x_\ell,u_\ell)
\mid
|x_\ell| \le \delta_\mathrm{dom},
\|u_\ell\| \le \delta_\mathrm{dom}
\},
\\
I_\ell^\mathrm{out}
&:=
\{
y_\ell
\mid
|y_\ell - y_\ell^\mathrm{out}| \le \delta_\mathrm{dom}
\}.
\end{align*}
Set
\[
\Pi_\ell^\mathrm{in} := B_\ell^\mathrm{in} \times I_\ell^\mathrm{in},
\qquad
\Pi_\ell^\mathrm{out} := B_\ell^\mathrm{out} \times I_\ell^\mathrm{out}.
\]
Take small $\delta_\mathrm{dom} = \delta_\mathrm{dom}(\mathfrak{F}_\mathrm{obj}) > 0$ so that the four boxes $\Pi_1^\mathrm{in}$, $\Pi_1^\mathrm{out}$, $\Pi_2^\mathrm{in}$, and $\Pi_2^\mathrm{out}$ are pairwise disjoint and satisfy
\[
\Pi_\ell^\mathrm{in} \subset U_\ell^*, \quad
\Pi_\ell^\mathrm{out} \subset V_\ell^*
\qquad
(\ell \in \{1,2\}).
\]
After shrinking $R_\mathrm{prm}$ if necessary, this implies
\begin{equation}
\label{eq-Pi12asmp}
T_{1 \to 2}(\Pi_1^\mathrm{out}) \subset U_2^*,
\qquad
T_{2 \to 1}(\Pi_2^\mathrm{out}) \subset U_1^*.
\end{equation}

\begin{definition}
A subset $\Lambda_\ell \subset \Pi_\ell^\mathrm{in}$ is a \emph{horizontal region} if
\[
\Lambda_\ell = \bigsqcup_{t \in [0,1]} \mathrm{graph}(\varphi_t)
\]
with $B_\ell^\mathrm{in} \times [0, 1] \ni (x_\ell,u_\ell,t) \longmapsto \varphi_t(x_\ell,u_\ell) \in I_\ell^\mathrm{in}$ of class $C^1$.
\end{definition}

\begin{definition}
For $k > \kappa_0$ and $\ell \in \{1,2\}$, set
\[
\Pi_{\ell,k}^\mathrm{in}(\varepsilon)
:=
\Pi_\ell^\mathrm{in} \cap T_\ell^{-k}(\Pi_\ell^\mathrm{out}),
\qquad
\Pi_{\ell,k}^\mathrm{out}(\varepsilon)
:=
T_\ell^k(\Pi_{\ell,k}^\mathrm{in}).
\]
After shrinking $R_\mathrm{prm}$ and $\delta_\mathrm{dom}$ and enlarging $\kappa_0$ if necessary, one can verify that $\Pi_{\ell,k}^\mathrm{in}$ is a horizontal region and $T_\ell^m(\Pi_{\ell,k}^\mathrm{in}) \subset U_\ell^*$ for $m \in \{0, \cdots, k\}$.
\end{definition}

\begin{definition}
For $i,j > \kappa_0$, define
\[
\widetilde{\mathrm{Dom}}(T_{ij})
:=
\{
P \in \mathrm{Dom}(T_{ij})
\mid
T_1^m \circ T_{ij}(P) \in U_1^* \text{ for any $m \in \{0, \cdots, i\}$}
\}.
\]
\end{definition}

\begin{lemma}
\label{lem-ptex}
After shrinking $R_\mathrm{prm}$ and enlarging $\kappa_0$ if necessary, the following hold for every $i,j > \kappa_0$.
\begin{itemize}
\item The domain $\mathrm{Dom}(T_{ij})$ contains a horizontal region $\Lambda_1^\mathrm{in} \subset \Pi_1^\mathrm{in}$; see Figure~\ref{fig-cycle-dom}.
\item There exists a constant $K_1 = K_1(\mathfrak{F}_\mathrm{obj}) > 0$ such that the hypothesis
\[
\left|\mu_1 + C_{21}x_2^\mathrm{in}\lambda_2^j - y_1^\mathrm{out} \gamma_1^{-i}\right|
<
K_1\bigl(|\lambda_1^i\lambda_2^j| + |\gamma_1|^{-i}\bigr)
\]
implies that $\widetilde{\mathrm{Dom}}(T_{ij})$ contains a closed set $\widetilde{\Lambda}_1^\mathrm{in} \subset \Lambda_1^\mathrm{in}$ with nonempty interior.
\end{itemize}
\end{lemma}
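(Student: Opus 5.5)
We build the domain backwards from the last box, using Lemma~\ref{lem-ILMs} for the local passages and Proposition~\ref{prop-globalrepr} for the global ones. Write $T_{ij}$ in the cross form: the independent variables are $(x_{10},u_{10},y_{1i})$ for the first local passage and $(\overline{x}_{20},\overline{u}_{20},\overline{y}_{2j})$ for the second.

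\textbf{First item.} Take $\Lambda_1^\mathrm{in} := \Pi_{1,i}^\mathrm{in}\cap T_1^{-i}\bigl(T_{1\to2}^{-1}(\Pi_{2,j}^\mathrm{in})\bigr)$. By the definition preceding the lemma, $\Pi_{1,i}^\mathrm{in}$ is a horizontal region. Its image $\Pi_{1,i}^\mathrm{out}$ is a thin vertical slab: $x_{1i},u_{1i}=O(\widehat\lambda_1^i)$ by Lemma~\ref{lem-ILMs}, while $y_{1i}$ fills $I_1^\mathrm{out}$.

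Since $D_{12}\neq0$ (Remark~\ref{rem-gcmeaning}), \eqref{eq-T1to2} shows that $T_{1\to2}$ carries this slab onto a strip crossing $\{y_2=0\}$ transversally near $M_{2,\mathrm{in}}$. Equivalently, $\overline{y}_{20}$ is a function of $y_{1i}$ with derivative bounded away from zero. Hence the condition $\overline{y}_{20}\in$ the $y$-range of the horizontal region $\Pi_{2,j}^\mathrm{in}$ (whose $y$-width is $O(\gamma_2^{-j})$) cuts out, via the implicit function theorem, an interval of $y_{1i}$ depending $C^1$ on $(x_{10},u_{10})$. The other coordinates $\overline{x}_{20},\overline{u}_{20}$ stay within $\delta_\mathrm{dom}$ of $(x_2^\mathrm{in},u_2^\mathrm{in})$ once $\delta_\mathrm{dom}$ is small and $i$ is large.

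Translating back through $y_{10}=\gamma_1^{-i}y_{1i}+\widehat\gamma_1^{-i}\mathsf{B}_{2,i}^{(1)}$ gives graphs $y_{10}=\varphi_t(x_{10},u_{10})$ over $B_1^\mathrm{in}$, i.e.\ a horizontal region. The $C^1$ bounds of $\mathsf{B}$ make $\partial_{y_{1i}}y_{10}\neq0$, so monotonicity in $t$ holds. The subsequent $T_2^j$ and $T_{2\to1}$ steps are well defined by \eqref{eq-Pi12asmp}. Enlarging $\kappa_0$ absorbs all error terms.

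\textbf{Second item.} For $P\in\Lambda_1^\mathrm{in}$ we compute $\overline{\overline{y}}_{10}$ from \eqref{eq-T2to1}. Substituting $\overline{x}_{2j}=\lambda_2^j\overline{x}_{20}+O(\widehat\lambda_2^j)$ and $\overline{x}_{20}=x_2^\mathrm{in}+O(|x_{1i}|+|y_{1i}-y_1^\mathrm{out}|)$ gives
\[
\overline{\overline{y}}_{10}=\mu_1+C_{21}x_2^\mathrm{in}\lambda_2^j+G_{21}(\overline{y}_{2j}-y_2^\mathrm{out})^2+\text{small terms}.
\]
Requiring $T_1^m T_{ij}(P)\in U_1^*$ for $0\le m\le i$ amounts, by Lemma~\ref{lem-ILMs} applied to the image orbit, to asking that the image lie near $\Pi_{1,i}^\mathrm{in}$. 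That is, $\overline{\overline{y}}_{10}$ must be $O(\gamma_1^{-i})$-close to $y_1^\mathrm{out}\gamma_1^{-i}$, and $\overline{\overline{x}}_{10},\overline{\overline{u}}_{10}$ must lie in $B_1^\mathrm{in}$. The latter follows from \eqref{eq-T2to1} since $\overline{x}_{2j},\overline{u}_{2j}$ are small and $\overline{y}_{2j}$ is near $y_2^\mathrm{out}$.

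Choose $\overline{y}_{2j}$ so that $\overline{y}_{2j}-y_2^\mathrm{out}$ is of size comparable to $(|\lambda_1^i\lambda_2^j|+|\gamma_1|^{-i})^{1/2}$. Then $G_{21}(\cdot)^2$ together with the hypothesis places $\overline{\overline{y}}_{10}-y_1^\mathrm{out}\gamma_1^{-i}$ inside the admissible band. Here the error term $x_{10}\lambda_1^i\lambda_2^j$ arises through $C_{21}\overline{x}_{2j}$, which explains the $|\lambda_1^i\lambda_2^j|$ term in $K_1$. Take $\widetilde\Lambda_1^\mathrm{in}$ to be the closure of the open set of such $P$; it has nonempty interior because all defining inequalities are strict and continuous. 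The constant $K_1$ is fixed small depending only on the bounds of $\mathsf{B}$, $\mathsf{R}$, $C_{21}$ and $G_{21}$.

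\textbf{Main obstacle.} The hardest step is the error bookkeeping. The remainder $\mathsf{R}^{(2)}_2$ and the $\widehat\lambda,\widehat\gamma$ corrections must be shown to be $o(|\lambda_1^i\lambda_2^j|+|\gamma_1|^{-i})$ uniformly, which is where inequality \eqref{eq-hineq} of Remark~\ref{rem-hatssm} is used. In particular, the cross term $\overline{x}_{2j}(\overline{y}_{2j}-y_2^\mathrm{out})$ requires care.
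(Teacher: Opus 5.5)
Your first item is fine. It is essentially the paper's argument written in cross coordinates: the paper notes that the curves $T_{1\to2}(V_1^\mathrm{out}(x_1^*,u_1^*))$ and the hypersurfaces $H_2^\mathrm{in}(y_2^*)$ are $C^1$-close to $T_{1\to2}(\Sigma_1^u)$ and $\Sigma_2^s$, so each pair meets once transversally near $M_{2,\mathrm{in}}$. That is what your implicit-function argument in $y_{1i}$ with $D_{12}\neq0$ expresses. One small slip: only $u_{1i}$ is $O(\widehat\lambda_1^i)$, while $x_{1i}=O(|\lambda_1|^i)$.

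The second item has a genuine gap. Put $a:=\mu_1+C_{21}x_2^\mathrm{in}\lambda_2^j-y_1^\mathrm{out}\gamma_1^{-i}$, $L:=|\lambda_1^i\lambda_2^j|$ and $g:=|\gamma_1|^{-i}$. You try to steer $\overline{\overline{y}}_{10}-y_1^\mathrm{out}\gamma_1^{-i}\approx a+G_{21}\overline{\eta}_2^2$ into the admissible band, of width $\asymp g$, by choosing $\overline\eta_2=\overline{y}_{2j}-y_2^\mathrm{out}$. You treat the $x_{10}$-contribution through $C_{21}\overline{x}_{2j}$ as an error.

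The trouble is that $G_{21}\overline\eta_2^2$ has the fixed sign of $G_{21}$, while the hypothesis is symmetric in the sign of $a$. If $aG_{21}>0$, no choice of $\overline\eta_2$ decreases $|a+G_{21}\overline\eta_2^2|$, and the hypothesis allows $|a|$ close to $K_1L$. Since $L/g=\sigma_1^i|\lambda_2|^j$, the regime $L\gg g$ occurs for admissible $i,j>\kappa_0$ whenever $\sigma_1>1$ and $i$ is large relative to $j$. In that regime your point stays far outside the band, and an error term of size $O(L)$ cannot bring it back. So your argument only proves the conclusion under the stronger condition $|a|\lesssim g$, where one may simply take $\overline\eta_2=0$.

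The missing idea is that the $x_{10}$-dependence is the free parameter, not an error; this is why $L$ appears in the hypothesis at all. The paper fixes $\overline{y}_{2j}=y_2^\mathrm{out}$ and lets $(x_{10},u_{10})$ run over $B_1^\mathrm{in}$. On this slice $\overline{x}_{20}-x_2^\mathrm{in}=(J_{12}/D_{12})\lambda_1^ix_{10}+\dots$, so, using $J_{12}\neq0$ and $C_{21}\neq0$, $\mathrm{pr}_{y_1}\circ T_{2\to1}$ covers an interval of half-width $K_4L$ about $\mu_1+C_{21}x_2^\mathrm{in}\lambda_2^j$. The admissible set is the strip $\{|y_1|<K_2g\}\subset\bigcap_{m=0}^iT_1^{-m}(U_1^*)$, centred at $0$, with $K_2>\sup|y_1^\mathrm{out}|$.

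Two intervals meet iff the distance between their centres is less than the sum of their half-widths. With $K_1\le K_4$ and $\sup|y_1^\mathrm{out}|+K_1\le K_2$, the hypothesis gives exactly $|\mu_1+C_{21}x_2^\mathrm{in}\lambda_2^j|<K_4L+K_2g$. Openness then yields a set with nonempty interior, which is pulled back by $T_2^{-j}$, $T_{1\to2}^{-1}$ and $T_1^{-i}$.

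Separately, \eqref{eq-hineq} is a lower bound on $\widehat\lambda_\ell$. It lets you absorb terms such as $\lambda_2^{2j}$ and $\lambda_2^j\gamma_2^{-j}$ into $O(\widehat\lambda_2^j)$, but it cannot supply the $o(L+g)$ bound on $\widehat\lambda_2^j$ that your last paragraph asks of it.
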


\begin{figure}[h]
\centering
\includegraphics[width=0.80\linewidth]{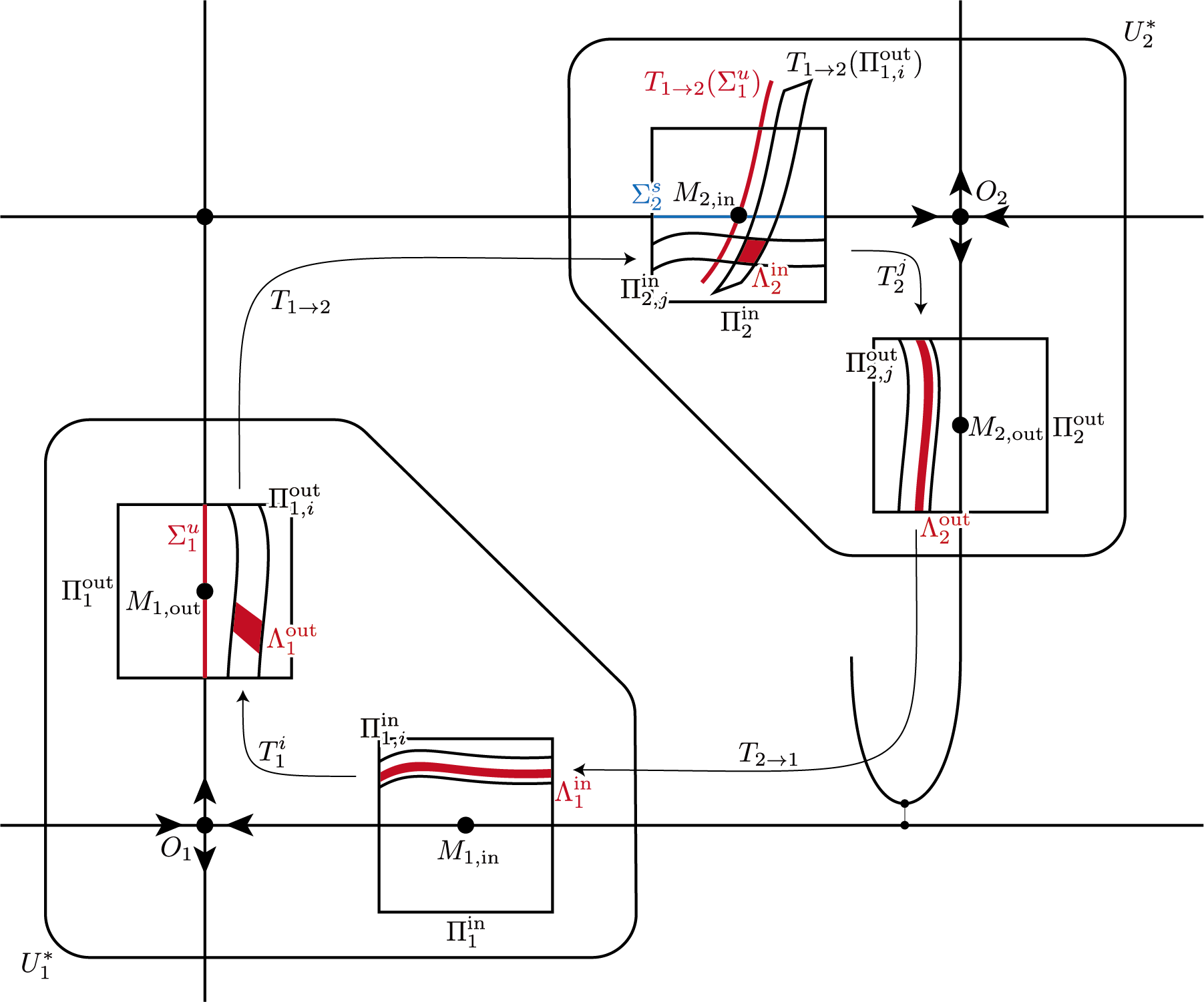}
\caption{Domain of the first-return map.}
\label{fig-cycle-dom}
\end{figure}

\begin{proof}
Set
\begin{align*}
\Sigma_1^u
&:=
\{x_1 = 0,\ u_1 = 0,\ |y_1 - y_1^\mathrm{out}| \le \delta_\mathrm{dom}\},
\\
\Sigma_2^s
&:=
\{\,|x_2 - x_2^\mathrm{in}| \le \delta_\mathrm{dom},\ \|u_2 - u_2^\mathrm{in}\| \le \delta_\mathrm{dom},\ y_2 = 0\,\}.
\end{align*}
For $(x_1^*,u_1^*) \in B_1^\mathrm{in}$ and $y_2^* \in I_2^\mathrm{out}$, set
\[
V_1^\mathrm{out}(x_1^*,u_1^*)
:=
T_1^i\bigl(\Pi_{1,i}^\mathrm{in} \cap \{x_1 = x_1^*,\ u_1 = u_1^*\}\bigr)
\subset
\Pi_{1,i}^\mathrm{out},
\]
\[
H_2^\mathrm{in}(y_2^*)
:=
T_2^{-j}\bigl(\Pi_{2,j}^\mathrm{out} \cap \{y_2 = y_2^*\}\bigr)
\subset
\Pi_{2,j}^\mathrm{in}.
\]
After enlarging $\kappa_0 = \kappa_0(\mathfrak{F}_\mathrm{obj})$ if necessary, the families $V_1^\mathrm{out}(x_1^*,u_1^*)$ and $H_2^\mathrm{in}(y_2^*)$ are uniformly $C^1$-close to $\Sigma_1^u$ and $\Sigma_2^s$. Since $T_{1 \to 2}(\Sigma_1^u)$ meets $\Sigma_2^s$ transversely at $M_{2,\mathrm{in}}$, after shrinking $R_\mathrm{prm}$ and enlarging $\kappa_0$ there is a connected component $\Lambda_2^\mathrm{in}$ of $T_{1 \to 2}(\Pi_{1,i}^\mathrm{out}) \cap \Pi_{2,j}^\mathrm{in}$ near $M_{2,\mathrm{in}}$ such that each curve $T_{1 \to 2}(V_1^\mathrm{out}(x_1^*,u_1^*))$ meets each surface $H_2^\mathrm{in}(y_2^*)$ transversely in one point of $\Lambda_2^\mathrm{in}$. In particular, $\Lambda_2^\mathrm{in}$ is closed.

Set
\[
\Lambda_1^\mathrm{out} := T_{1 \to 2}^{-1}(\Lambda_2^\mathrm{in}),
\qquad
\Lambda_1^\mathrm{in} := T_1^{-i}(\Lambda_1^\mathrm{out}),
\qquad
\Lambda_2^\mathrm{out} := T_2^j(\Lambda_2^\mathrm{in}).
\]
Then $\Lambda_1^\mathrm{out}$ and $\Lambda_2^\mathrm{out}$ are closed, $\Lambda_1^\mathrm{in}$ is a horizontal region, and
\[
T_{2 \to 1}(\Lambda_2^\mathrm{out}) \subset U_1^*
\]
by \eqref{eq-Pi12asmp}.
Therefore $\Lambda_1^\mathrm{in} \subset \mathrm{Dom}(T_{ij})$. This proves item~1.

Since $U_1^*$ contains both $O_1(\varepsilon)$ and $M_{1,\mathrm{out}}(\varepsilon)$, Lemma~\ref{lem-ILMs} implies that, after shrinking $R_\mathrm{prm}$ if necessary, there exists
\[
K_2 = K_2(\mathfrak{F}_\mathrm{obj}) > \sup_{\varepsilon \in R_\mathrm{prm}} |y_1^\mathrm{out}(\varepsilon)|
\]
such that
\[
\bigcap_{m = 0}^i T_1^{-m}(U_1^*)
\supset
\{\,|y_1| < K_2|\gamma_1|^{-i}\,\}
\]
for every $i > \kappa_0$.

Choose $K_3 > 0$ small. For $|s - y_2^\mathrm{out}| \le K_3$, set
\[
S_2^\mathrm{out}(s)
:=
\Lambda_2^\mathrm{out} \cap \{y_2 = s\}.
\]
Then
\[
S_2^\mathrm{out}(y_2^\mathrm{out})
=
T_2^j\bigl(\Lambda_2^\mathrm{in} \cap H_2^\mathrm{in}(y_2^\mathrm{out})\bigr).
\]
Its $\overline{x}_{2j}$-projection has length comparable to $|\lambda_1^i\lambda_2^j|$. Therefore, using \eqref{eq-T2to1}, Remark~\ref{rem-gcmeaning}, and Convention~\ref{conv-repmu1}, after shrinking $R_\mathrm{prm}$ and enlarging $\kappa_0$ there exists $K_4 = K_4(\mathfrak{F}_\mathrm{obj}) > 0$ such that
\[
\bigl(
\mu_1 + C_{21}x_2^\mathrm{in}\lambda_2^j - K_4|\lambda_1^i\lambda_2^j|,
\mu_1 + C_{21}x_2^\mathrm{in}\lambda_2^j + K_4|\lambda_1^i\lambda_2^j|
\bigr)
\subset
(\mathrm{pr}_{y_1} \circ T_{2 \to 1})(S_2^\mathrm{out}(y_2^\mathrm{out}))
\]
where $\mathrm{pr}_{y_1}(x_1,y_1,u_1) := y_1$. After reducing $K_3$ if necessary, the same inclusion holds for every $s$ with $|s - y_2^\mathrm{out}| \le K_3$.

Choose $K_1 > 0$ so that
\[
K_1 \le K_4
\qquad\text{and}\qquad
\sup_{\varepsilon \in R_\mathrm{prm}} |y_1^\mathrm{out}(\varepsilon)| + K_1 \le K_2.
\]
If
\[
\left|\mu_1 + C_{21}x_2^\mathrm{in}\lambda_2^j - y_1^\mathrm{out}\gamma_1^{-i}\right|
<
K_1\bigl(|\lambda_1^i\lambda_2^j| + |\gamma_1|^{-i}\bigr),
\]
then
\[
\begin{aligned}
\left|\mu_1 + C_{21}x_2^\mathrm{in}\lambda_2^j\right|
&\le
\left|\mu_1 + C_{21}x_2^\mathrm{in}\lambda_2^j - y_1^\mathrm{out}\gamma_1^{-i}\right|
+
|y_1^\mathrm{out}|\,|\gamma_1|^{-i} \\
&<
K_1|\lambda_1^i\lambda_2^j|
+
\bigl(K_1 + |y_1^\mathrm{out}|\bigr)|\gamma_1|^{-i} \\
&\le
K_4|\lambda_1^i\lambda_2^j| + K_2|\gamma_1|^{-i}.
\end{aligned}
\]
Therefore every set $S_2^\mathrm{out}(s)$ with $|s - y_2^\mathrm{out}| \le K_3$ meets the strip $\{|y_1| < K_2|\gamma_1|^{-i}\}$. Hence
\[
\Omega_2^\mathrm{out}
:=
\Lambda_2^\mathrm{out}
\cap
\{|y_2 - y_2^\mathrm{out}| \le K_3\}
\cap
T_{2 \to 1}^{-1}(\{|y_1| < K_2|\gamma_1|^{-i}\})
\]
has nonempty interior. Choose a closed set $\widetilde{\Lambda}_2^\mathrm{out} \subset \Omega_2^\mathrm{out}$ with nonempty interior.

Set
\[
\widetilde{\Lambda}_2^\mathrm{in}
:=
T_2^{-j}(\widetilde{\Lambda}_2^\mathrm{out}) \subset \Lambda_2^\mathrm{in},
\qquad
\widetilde{\Lambda}_1^\mathrm{out}
:=
T_{1 \to 2}^{-1}(\widetilde{\Lambda}_2^\mathrm{in}) \subset \Lambda_1^\mathrm{out},
\]
and
\[
\widetilde{\Lambda}_1^\mathrm{in}
:=
T_1^{-i}(\widetilde{\Lambda}_1^\mathrm{out}) \subset \Lambda_1^\mathrm{in}.
\]
Since the three displayed maps are diffeomorphisms, $\widetilde{\Lambda}_1^\mathrm{in}$ is closed and has nonempty interior. By construction,
\[
T_{ij}(\widetilde{\Lambda}_1^\mathrm{in})
\subset
T_{2 \to 1}(\widetilde{\Lambda}_2^\mathrm{out})
\subset
\bigcap_{m = 0}^i T_1^{-m}(U_1^*),
\]
hence $\widetilde{\Lambda}_1^\mathrm{in} \subset \widetilde{\mathrm{Dom}}(T_{ij})$. This proves item~2.
\end{proof}

\section{The rescaling lemma and its proof}
\label{sec-rescaling}

\subsection{The rescaling lemma}

Set
\[
\mu_2^* := \frac{\log \sigma_2^*}{\log \sigma_1^*} < 0,
\qquad
\mathcal{I} := \left\{ (i,j) \in \mathbb{Z}_{>\kappa_0}^2 \mid |i + \mu_2^* j| \le 1 \right\}.
\]
Note that $\mathcal{I}$ is infinite. Because if $\alpha := -\mu_2^* \in \mathbb{Q}$, take all sufficiently large exact solutions of $i = \alpha j$; if $\alpha \notin \mathbb{Q}$, take $i$ to be a nearest integer to $\alpha j$ for infinitely many $j$.

\begin{convention}
\label{not-bigO}
Let
\[
F = F(\varepsilon,i,j,P_1,P_2),
\qquad
G = G(\varepsilon,i,j,P_1,P_2),
\]
where
\[
\varepsilon \in R_\mathrm{prm},
\qquad
(i,j) \in \mathcal{I},
\qquad
P_1 \in U_1^*,
\qquad
P_2 \in U_2^*.
\]
We write $F = O(G)$ if
\[
|F| \le C\,|G|
\]
for all arguments with a constant $C = C(\mathfrak{F}_\mathrm{obj}) > 0$. If $G$ does not vanish, then $F = o(G)$ means
\[
\sup_{\varepsilon \in R_\mathrm{prm},\, P_1 \in U_1^*,\, P_2 \in U_2^*}
\left| \frac{F(\varepsilon,i,j,P_1,P_2)}{G(\varepsilon,i,j,P_1,P_2)} \right|
\xrightarrow[\min\{i,j\}\to\infty,\ (i,j)\in\mathcal{I}]{} 0.
\]
Unless stated otherwise, every occurrence of $O(\cdot)$ or $o(\cdot)$ in Section~\ref{sec-rescaling} is understood in this uniform sense.
\end{convention}

\begin{lemma}[Rescaling lemma]
\label{lem-rescaling}
After enlarging $\kappa_0$ if necessary, there exists a family $\{\Delta_{ij}\}_{(i,j)\in\mathcal I}$, where each $\Delta_{ij} \subset R_\mathrm{prm}$ is a connected open set, such that
\[
\sup_{\varepsilon \in \Delta_{ij}} \|\varepsilon - \varepsilon^*\|
\xrightarrow[\min\{i,j\}\to\infty,\ (i,j)\in\mathcal I]{} 0.
\]
Moreover, for every $(i,j) \in \mathcal{I}$ and every $\varepsilon \in \Delta_{ij}$ there exist $C^r$ coordinates $(X,Y,U)$ on $\widetilde{\Lambda}_1^\mathrm{in}$ depending on $(\varepsilon,i,j)$ such that the first-return map
\[
T_{ij} : (X,Y,U) \longmapsto (\overline{X},\overline{Y},\overline{U})
\]
has the form
\begin{align*}
\overline{X} &= Y, \\
\overline{Y} &= M_1 - M_2 X - Y^2 + \mathsf{Y}_{ij}(X,Y,U,M_1,M_2), \\
\overline{U} &= \mathsf{U}_{ij}(X,Y,U,M_1,M_2),
\end{align*}
where $X,Y \in \mathbb{R}, \, U \in \mathbb{R}^{d_s-1}$, and
\begin{align*}
M_1 &= -D_{12}^2 G_{21} \left( \mu_1 + C_{21} x_2^\mathrm{in} \lambda_2^j - y_1^\mathrm{out} \gamma_1^{-i} \right) \gamma_1^{2i} \gamma_2^{2j}, \\
M_2 &= -B_{21} C_{21} J_{12} \lambda_1^i \gamma_1^i \lambda_2^j \gamma_2^j.
\end{align*}
The map
\[
\Delta_{ij} \ni \varepsilon \longmapsto (M_1(\varepsilon),M_2(\varepsilon)) \in \widetilde{\Delta}_{ij}
\]
is a $C^{r-2}$ diffeomorphism onto
\[
\widetilde{\Delta}_{ij} := (-2,4) \times (\varsigma_{ij} - 1/2, \varsigma_{ij} + 1/2),
\]
where
\[
\varsigma_{ij}
:=
\operatorname{sign}\left(
\left.
-B_{21}C_{21}J_{12}\lambda_1^i\gamma_1^i\lambda_2^j\gamma_2^j
\right|_{\varepsilon=\varepsilon^*}
\right)
\in \{-1,1\}.
\]
The coordinates $(X, Y, U)$ depend on $\varepsilon$ with the same regularity as in Remark~\ref{rem-ALCdetails}~(2). The rescaled domains are asymptotically large: every compact set $K \subset \mathbb{R}^{\dim(M_\mathrm{ph})}$ is contained in the rescaled domain for all sufficiently large $(i,j) \in \mathcal{I}$. Finally, for any finite integers $s$ and $l$ with $2 \le s \le r$ and $0 \le l \le r$ and every bounded sets
\[
B \subset \mathbb{R}^{\dim(M_\mathrm{ph})}, \quad \widehat{B} := B \times \widetilde{\Delta}_{ij}
\]
the norms satisfy
\[
\|\mathsf{Y}_{ij}\|_{C^{s-2}(\widehat{B})},\ \|\mathsf{U}_{ij}\|_{C^{s-2}(\widehat{B})},\ \|\mathsf{Y}_{ij}(\cdot, \cdot, \cdot, M_1, M_2)\|_{C^{l}(B)},\ \|\mathsf{U}_{ij}(\cdot, \cdot, \cdot, M_1, M_2)\|_{C^{l}(B)} = o(1).
\]
\end{lemma}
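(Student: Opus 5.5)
We compose the coordinate representations of Lemma~\ref{lem-ILMs} (for $T_1^i$ and $T_2^j$) with the truncated Taylor expansions \eqref{eq-T1to2} and \eqref{eq-T2to1} of Proposition~\ref{prop-globalrepr}, exactly as in the standard Gonchenko--Shilnikov--Turaev scheme. We use the "cross" variables $(x_{10},y_{1i},u_{10})$ for the first passage and $(\overline{x}_{20},\overline{y}_{2j},\overline{u}_{20})$ for the second. By Lemma~\ref{lem-ILMs}, $x_{1i}=\lambda_1^i x_{10}+O(\widehat\lambda_1^i)$, $y_{10}=\gamma_1^{-i}y_{1i}+O(\widehat\gamma_1^{-i})$, $u_{1i}=O(\widehat\lambda_1^i)$, and similarly for $\ell=2$. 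Substituting into \eqref{eq-T1to2} expresses $\overline{x}_{20}-x_2^{\mathrm{in}}$ and $\overline{y}_{20}=\gamma_2^{-j}\overline{y}_{2j}+O(\widehat\gamma_2^{-j})$ in terms of $x_{10}$ and $y_{1i}-y_1^{\mathrm{out}}$. Using $D_{12}\ne0$, we solve the $\overline{y}_{20}$ equation for $y_{1i}-y_1^{\mathrm{out}}$ as a function of $\overline{y}_{2j}$ and $x_{10}$. Substituting into \eqref{eq-T2to1}, with $\overline{x}_{2j}=\lambda_2^j\overline{x}_{20}+\dots$, yields a system expressing $(\overline{\overline{x}}_{10},\overline{\overline{y}}_{10},\overline{\overline{u}}_{10})$. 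Finally, we use $\overline{\overline{y}}_{10}=\gamma_1^{-i}\overline{\overline{y}}_{1i}+\dots$ (the next return's cross variable) so that the map is written in variables $(x_{10},y_{1i},u_{10})\mapsto(\overline{x}_{10},\overline{y}_{1i},\overline{u}_{10})$.

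Next we shift and rescale. Set $y_{1i}-y_1^{\mathrm{out}}=\beta_{ij}(\ldots)$ with $\beta_{ij}\sim \gamma_1^{-i}\gamma_2^{-j}/(D_{12}^{\,\cdot}\,G_{21})$, which is chosen so that the quadratic term $G_{21}(\overline{y}_{2j}-y_2^{\mathrm{out}})^2$, multiplied by $\gamma_1^{i}$ after the $y$-expansion, becomes $-Y^2$. The $x$-variable is taken as $X$ proportional to the new $y$-variable's preimage, so that $\overline{X}=Y$. This is the standard trick: the $\overline{x}$-equation is, to leading order, $\overline{x}_{10}-x_1^{\mathrm{in}}=B_{21}(\overline{y}_{2j}-y_2^{\mathrm{out}})+\dots$, which is linear in the quantity that becomes $Y$. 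We first remove the linear term in $Y$ in the $\overline{Y}$ equation, which comes from the $\overline{x}_{2j}(\overline{y}_{2j}-y_2^{\mathrm{out}})$ and cubic parts, by a shift of $Y$ of size $o(1)$. This leaves the constant term $M_1$, which collects $\mu_1$, $C_{21}x_2^{\mathrm{in}}\lambda_2^j$, and $-y_1^{\mathrm{out}}\gamma_1^{-i}$, times $-D_{12}^2G_{21}\gamma_1^{2i}\gamma_2^{2j}$. The coefficient of $X$ is $M_2=-B_{21}C_{21}J_{12}(\lambda_1\gamma_1)^i(\lambda_2\gamma_2)^j$, where $J_{12}$ arises from $A_{12}-B_{12}C_{12}/D_{12}$ after eliminating $y_{1i}$. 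The $U$ coordinate is $u_{10}-u_1^{\mathrm{in}}$ scaled by the same factor as $X$; its image is $O(\widehat\lambda_1^i)$ relative terms, hence $\mathsf U_{ij}=o(1)$. All error terms are controlled by \eqref{eq-hineq}: ratios like $\widehat\lambda_\ell^k/|\lambda_\ell|^k$ times $\lambda_{\max}^{-k}$-type scalings tend to zero. The condition $(i,j)\in\mathcal I$ ensures that $\sigma_1^i\sigma_2^j=\sigma_1^{i+\mu_2 j}$ stays bounded near $\varepsilon^*$, so that mixed errors (for instance $\gamma_1^{-i}$ relative to $\lambda_2^j$ terms) remain comparable. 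The $C^{s-2}$ norms in parameters come from the $C^{r-2}$ regularity statements in Lemma~\ref{lem-ILMs} and Proposition~\ref{prop-globalrepr}.

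For the parameter statement, $M_2=\varsigma_{ij}\,\sigma_1(\varepsilon)^{\,i+\mu_2(\varepsilon)j}\cdot(\text{bounded nonzero factor})$. Since $\sigma_1^{i+\mu_2 j}=\exp((i+\mu_2 j)\log\sigma_1)$ and $\partial_{\mu_2}$ of this is $j\log\sigma_1$ times it, a $\mu_2$-window of width $O(1/j)$ around $\mu_2^*$ (shifted by $O(1/j)$) covers $|M_2-\varsigma_{ij}|<1/2$. For fixed $\mu_2$, $M_1$ is affine in $\mu_1$ with slope of size $\gamma_1^{2i}\gamma_2^{2j}\to\infty$, so a $\mu_1$-window of width $O(|\gamma_1|^{-2i}|\gamma_2|^{-2j})$ near $-C_{21}x_2^{\mathrm{in}}\lambda_2^j+y_1^{\mathrm{out}}\gamma_1^{-i}$ covers $(-2,4)$. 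The Jacobian of $(\mu_1,\mu_2)\mapsto(M_1,M_2)$ is dominated by its triangular part; the $\partial_{\mu_1}$ dependence of $M_2$ and of the other coefficients is lower order. A degree/inverse function argument therefore gives a $C^{r-2}$ diffeomorphism from some connected $\Delta_{ij}$ onto $\widetilde\Delta_{ij}$, with $\Delta_{ij}\to\varepsilon^*$. These parameters satisfy the hypothesis of Lemma~\ref{lem-ptex}, so $\widetilde\Lambda_1^{\mathrm{in}}$ is available, and the rescaled domain, being of original size $\delta_{\mathrm{dom}}$ divided by vanishing scales, exhausts $\mathbb R^{\dim M_{\mathrm{ph}}}$.

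The main obstacle is the bookkeeping that every remainder in the rescaled coordinates is $o(1)$ in $C^{s-2}$ jointly with parameters. Differentiating in $\varepsilon$ hits factors $\lambda^i,\gamma^j$, producing polynomial factors $i,j$. These must be absorbed by the exponential gaps in \eqref{eq-hineq}, and derivatives with respect to $(M_1,M_2)$ rather than $\varepsilon$ supply the inverse scalings. I would handle this by working with $(M_1,M_2)$-derivatives via the chain rule, bounding $\partial\varepsilon/\partial M$ by $O(|\gamma_1|^{-2i}|\gamma_2|^{-2j})$ and $O(1/j)$.
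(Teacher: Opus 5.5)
Your skeleton matches the paper's: cross-form composition from Lemma~\ref{lem-ILMs} and Proposition~\ref{prop-globalrepr}, elimination of $\eta_1$ via $D_{12}\neq0$, then rescaling with $\gamma_1^{2i}\gamma_2^{2j}$. Your parameter-window argument for $(\mu_1,\mu_2)\mapsto(M_1,M_2)$ is sound, and so is your check of the hypothesis of Lemma~\ref{lem-ptex}; both are more explicit than the paper.

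\textbf{The $Y$-recentering.} The normalization fails as written. You remove the linear $Y$-term of the $\overline{Y}$-equation by a shift of $Y$ of size $o(1)$, but that coefficient is not small.
\begin{itemize}
\item The mixed term $\partial_{x_2}\partial_{y_2}\overline{y}_1\,\overline{x}_{2j}\overline{\eta}_2$ in $\mathsf{R}_2^{(2)}$ has $\overline{x}_{2j}\approx\lambda_2^j x_2^\mathrm{in}$. After $\overline{\eta}_2\sim\gamma_1^{-i}\gamma_2^{-j}\overline{Y}_2$ and multiplication by $\gamma_1^{2i}\gamma_2^{2j}$, it has a coefficient of modulus $\asymp|\lambda_2^j\gamma_1^i\gamma_2^j|=\sigma_1^i\sigma_2^j|\lambda_1|^{-i}\asymp|\lambda_1|^{-i}$. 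Here $\sigma_1^i\sigma_2^j\asymp1$ because $|M_2|\in(1/2,3/2)$. This is unbounded unless $\partial_{x_2}\partial_{y_2}\overline{y}_1\,x_2^\mathrm{in}=0$.
\item To get $\overline{X}=Y$ you need the triangularization $\eta_1\mapsto\eta_1+\frac{C_{12}}{D_{12}}\lambda_1^i\xi_1$. Without it, $\overline{Y}_2=Y+B_{21}C_{12}\lambda_1^i\gamma_2^jX+\cdots$. The triangularization produces the linear coefficients $B_{21}C_{12}\lambda_1^i\gamma_2^j$ and $B_{12}C_{21}\lambda_2^j\gamma_1^i$. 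Their product has modulus $\asymp1$, so at least one is not $o(1)$, and generically one is exponentially large.
\end{itemize}
Completing the square therefore moves $(X,Y)$ and the constant term by unbounded amounts, and ``this leaves the constant term $M_1$'' does not follow.

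The paper recenters before rescaling, in unscaled Shilnikov coordinates:
\begin{itemize}
\item Lemma~\ref{lem-shift-1} finds a translation by a contraction argument that kills the constants and the $\overline{\eta}_2$-coefficient at the new origin;
\item then comes the triangularization of Lemma~\ref{lem-triangularization};
\item then Lemma~\ref{lem-shift-2}.
\end{itemize}
All shifts there are $O(|\lambda_1^i|+|\gamma_1^{-i}|+|\lambda_2^j|+|\gamma_2^{-j}|)$, so the rescaled domains still exhaust $\mathbb{R}^{\dim(M_\mathrm{ph})}$.

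Even then, the constants this produces are not $o(1)$ after multiplication by $\gamma_1^{2i}\gamma_2^{2j}$. The same holds for constant parts of remainders, such as the $\widehat{\gamma}_1^{-i}$ and $\widehat{\lambda}_2^{j}$ terms. They must go into $M_1$ as small corrections to $\mu_1$, like the usual $(1+\dots)$ factors in Gonchenko--Shilnikov--Turaev, not into $\mathsf{Y}_{ij}$. The paper is terse here too, so the displayed $M_1$ should be read modulo such corrections. Your parameter argument survives, because these corrections and their $\varepsilon$-derivatives are small compared with the main terms.

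\textbf{The $U$-coordinate and the $\eta_1$ scale.} Your $U$-coordinate does not give $\mathsf{U}_{ij}=o(1)$. By \eqref{eq-T2to1},
\[
\overline{\overline{u}}_{10}-u_1^\mathrm{in}=E_{21}\overline{x}_{2j}+F_{21}(\overline{y}_{2j}-y_2^\mathrm{out})+\cdots,
\qquad
\overline{\overline{x}}_{10}-x_1^\mathrm{in}=A_{21}\overline{x}_{2j}+B_{21}(\overline{y}_{2j}-y_2^\mathrm{out})+\cdots.
\]
So if $U$ is $u_{10}-u_1^\mathrm{in}$ scaled like $X$, then $\overline{U}=\frac{F_{21}}{B_{21}}\overline{X}+o(1)$. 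The bound $O(\widehat{\lambda}_1^i)$ you invoke applies to $u_{1i}$, which $T_1^i$ contracts. It does not apply to the output $\overline{\overline{u}}_{10}$ of $T_{2\to1}$, which is not contracted.

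The paper fixes this with the shear $\upsilon_1-\frac{F_{21}}{B_{21}}\xi_1$ in \eqref{eq-rescaledXY}. This cancels $F_{21}\overline{\eta}_2$ and leaves $\bigl(E_{21}-\frac{F_{21}A_{21}}{B_{21}}\bigr)\lambda_2^j\overline{\xi}_2$, which is $O(|\lambda_1^i\lambda_2^j|)$ in $U$-units.

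Finally, $\eta_1=y_{1i}-y_1^\mathrm{out}$ must be scaled by $\gamma_1^{-i}\gamma_2^{-2j}$, not $\gamma_1^{-i}\gamma_2^{-j}$. The latter is the scale of $\overline{\eta}_2$, and $\gamma_2^{-j}\overline{\eta}_2=D_{12}\eta_1+\cdots$ forces the extra factor $\gamma_2^{-j}$. With your $\beta_{ij}$, the term $G_{21}\overline{\eta}_2^2$ becomes $\gamma_2^{j}Y^2$ rather than $-Y^2$, and $M_1$ would carry $\gamma_1^{2i}\gamma_2^{j}$ instead of $\gamma_1^{2i}\gamma_2^{2j}$.
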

\subsection{Proof of the rescaling lemma}

\subsubsection{Compositions}

Take points \eqref{eq-5pts} satisfying \eqref{eq-5ptseq}, with $(x_{10},y_{10},u_{10}) \in \widetilde{\Lambda}_1^\mathrm{in}$, and set
\[
(\overline{\overline{x}}_{1i},\overline{\overline{y}}_{1i},\overline{\overline{u}}_{1i})
:=
T_1^i(\overline{\overline{x}}_{10},\overline{\overline{y}}_{10},\overline{\overline{u}}_{10}).
\]
Lemma~\ref{lem-ILMs}, together with \eqref{eq-T1to2}, \eqref{eq-T2to1}, and \eqref{eq-hineq}, gives
\begin{equation}
\label{eq-FLGcomp}
\begin{aligned}
\overline{x}_{20} - x_2^\mathrm{in}
&=
A_{12}\lambda_1^i x_{10}
+ B_{12}(y_{1i} - y_1^\mathrm{out})
+
\mathsf{L}_1^{(1)},
\\
\gamma_2^{-j}\overline{y}_{2j}
+
\widehat{\gamma}_2^{-j}
\mathsf{B}_{2,j}^{(2)}(\overline{x}_{20},\overline{y}_{2j},\overline{u}_{20},\varepsilon)
&=
C_{12}\lambda_1^i x_{10}
+
D_{12}(y_{1i} - y_1^\mathrm{out})
+
\mathsf{L}_2^{(1)},
\\
\overline{u}_{20} - u_2^\mathrm{in}
&=
E_{12}\lambda_1^i x_{10}
+
F_{12}(y_{1i} - y_1^\mathrm{out})
+
\mathsf{L}_3^{(1)},
\end{aligned}
\end{equation}
where, for $\ell \in \{1,2,3\}$,
\[
\mathsf{L}_\ell^{(1)}(x_{10}, y_{1i} - y_1^\mathrm{out}, u_{10}, \varepsilon)
=
O\!\left(
\widehat{\lambda}_1^i
+
|\lambda_1^i x_{10}(y_{1i} - y_1^\mathrm{out})|
+
|y_{1i} - y_1^\mathrm{out}|^2
\right).
\]
Likewise,
\begin{equation}
\label{eq-SLGcomp}
\begin{aligned}
\overline{\overline{x}}_{10} - x_1^\mathrm{in}
&=
A_{21}\lambda_2^j\overline{x}_{20}
+
B_{21}(\overline{y}_{2j} - y_2^\mathrm{out})
+
\mathsf{L}_1^{(2)},
\\
\gamma_1^{-i}\overline{\overline{y}}_{1i}
+
\widehat{\gamma}_1^{-i}
\mathsf{B}_{2,i}^{(1)}(\overline{\overline{x}}_{10},\overline{\overline{y}}_{1i},\overline{\overline{u}}_{10},\varepsilon)
&=
\mu_1
+
C_{21}\lambda_2^j\overline{x}_{20}
+
G_{21}(\overline{y}_{2j} - y_2^\mathrm{out})^2
+
\mathsf{L}_2^{(2)},
\\
\overline{\overline{u}}_{10} - u_1^\mathrm{in}
&=
E_{21}\lambda_2^j\overline{x}_{20}
+
F_{21}(\overline{y}_{2j} - y_2^\mathrm{out})
+
\mathsf{L}_3^{(2)},
\end{aligned}
\end{equation}
where
\begin{align*}
\mathsf{L}_\imath^{(2)}(\overline{x}_{20}, \overline{y}_{2j} - y_2^\mathrm{out}, \overline{u}_{20}, \varepsilon)
&=
O\!\left(
\widehat{\lambda}_2^j
+
|\lambda_2^j \overline{x}_{20}(\overline{y}_{2j} - y_2^\mathrm{out})|
+
|\overline{y}_{2j} - y_2^\mathrm{out}|^2
\right),
\qquad
\imath \in \{1,3\},
\\
\mathsf{L}_2^{(2)}(\overline{x}_{20}, \overline{y}_{2j} - y_2^\mathrm{out}, \overline{u}_{20}, \varepsilon)
&=
O\!\left(
\widehat{\lambda}_2^j
+
|\lambda_2^j \overline{x}_{20}(\overline{y}_{2j} - y_2^\mathrm{out})|
+
|\overline{y}_{2j} - y_2^\mathrm{out}|^3
\right).
\end{align*}

\subsubsection{Shilnikov coordinates}

For points satisfying \eqref{eq-5pts} and \eqref{eq-5ptseq}, set
\begin{equation}
\label{eq-shilcoords}
\begin{aligned}
\xi_1 := x_{10} - x_1^\mathrm{in},
\qquad
\eta_1 := y_{1i} - y_1^\mathrm{out},
\qquad
\upsilon_1 := u_{10} - u_1^\mathrm{in},
\\
\overline{\xi}_2 := \overline{x}_{20} - x_2^\mathrm{in},
\qquad
\overline{\eta}_2 := \overline{y}_{2j} - y_2^\mathrm{out},
\qquad
\overline{\upsilon}_2 := \overline{u}_{20} - u_2^\mathrm{in},
\\
\overline{\overline{\xi}}_1 := \overline{\overline{x}}_{10} - x_1^\mathrm{in},
\qquad
\overline{\overline{\eta}}_1 := \overline{\overline{y}}_{1i} - y_1^\mathrm{out},
\qquad
\overline{\overline{\upsilon}}_1 := \overline{\overline{u}}_{10} - u_1^\mathrm{in}.
\end{aligned}
\end{equation}
These are the so-called \emph{Shilnikov coordinates}.

Lemma~\ref{lem-ILMs} gives
\[
|x_{1i}| + \|u_{1i}\| + |\overline{y}_{20}|
=
O\bigl(|\lambda_1^i| + |\gamma_2^{-j}|\bigr)
\]
on $\Lambda_1^\mathrm{in}$. Hence
\begin{equation}
\label{eq-domain-bd}
|y_{1i} - y_1^\mathrm{out}|
+
|\overline{x}_{20} - x_2^\mathrm{in}|
+
\|\overline{u}_{20} - u_2^\mathrm{in}\|
=
O\bigl(|\lambda_1^i| + |\gamma_2^{-j}|\bigr)
\end{equation}
on $\Lambda_1^\mathrm{in}$, by \eqref{eq-T1to2}, Remark~\ref{rem-gcmeaning}, and the smallness of $\delta_\mathrm{dom}$. Therefore, for $(x_{10},y_{10},u_{10}) \in \widetilde{\Lambda}_1^\mathrm{in} \subset \Lambda_1^\mathrm{in}$,
\begin{equation}
\label{eq-shilrest}
\xi_1,\ \upsilon_1,\ \overline{\eta}_2
=
O(1),
\qquad
\eta_1,\ \overline{\xi}_2,\ \overline{\upsilon}_2
=
O\bigl(|\lambda_1^i| + |\gamma_2^{-j}|\bigr).
\end{equation}

Using \eqref{eq-shilcoords}, \eqref{eq-FLGcomp}, \eqref{eq-SLGcomp}, and \eqref{eq-shilrest}, Taylor expansion gives
\begin{equation}
\label{eq-FLGshil}
\begin{aligned}
\overline{\xi}_2
&=
A_{12}\lambda_1^i x_1^\mathrm{in}[1+o(1)]
+
A_{12}\lambda_1^i \xi_1[1+o(1)]
+
B_{12}\eta_1[1+o(1)]
+
\mathsf{R}_1^{(3)},
\\
\gamma_2^{-j}\overline{\eta}_2
+
\widehat{\gamma}_2^{-j}\mathsf{B}_{2,j}^{(3)}
&=
\bigl(C_{12}\lambda_1^i x_1^\mathrm{in} - \gamma_2^{-j}y_2^\mathrm{out}\bigr)[1+o(1)]
+
C_{12}\lambda_1^i \xi_1[1+o(1)]
+
D_{12}\eta_1[1+o(1)]
+
\mathsf{R}_2^{(3)},
\\
\overline{\upsilon}_2
&=
E_{12}\lambda_1^i x_1^\mathrm{in}[1+o(1)]
+
E_{12}\lambda_1^i \xi_1[1+o(1)]
+
F_{12}\eta_1[1+o(1)]
+
\mathsf{R}_3^{(3)},
\end{aligned}
\end{equation}
where
\begin{equation}
\label{eq-FLGshilsub}
\begin{aligned}
\mathsf{R}_\ell^{(3)}(\xi_1,\eta_1,\upsilon_1,\varepsilon)
&=
O(\widehat{\lambda}_1^i + \widehat{\gamma}_2^{-j}),
\qquad
\ell \in \{1,2,3\},
\\
\mathsf{B}_{2,j}^{(3)}(\overline{\xi}_2,\overline{\eta}_2,\overline{\upsilon}_2,\varepsilon)
&:=
\mathsf{B}_{2,j}^{(2)}(\overline{\xi}_2 + x_2^\mathrm{in},\overline{\eta}_2 + y_2^\mathrm{out},\overline{\upsilon}_2 + u_2^\mathrm{in},\varepsilon).
\end{aligned}
\end{equation}
Similarly,
\begin{equation}
\label{eq-SLGshil}
\begin{aligned}
\overline{\overline{\xi}}_1
&=
A_{21}\lambda_2^j x_2^\mathrm{in}[1+o(1)]
+
A_{21}\lambda_2^j \overline{\xi}_2[1+o(1)]
+
B_{21}\overline{\eta}_2[1+o(1)]
+
\mathsf{R}_1^{(4)},
\\
\gamma_1^{-i}\overline{\overline{\eta}}_1
+
\widehat{\gamma}_1^{-i}\mathsf{B}_{2,i}^{(4)}
&=
\bigl(
\mu_1
+
C_{21}\lambda_2^j x_2^\mathrm{in}
-
\gamma_1^{-i}y_1^\mathrm{out}
\bigr)[1+o(1)]
+
C_{21}\lambda_2^j \overline{\xi}_2[1+o(1)]
+
G_{21}\overline{\eta}_2^2[1+o(1)]
+
\mathsf{R}_2^{(4)},
\\
\overline{\overline{\upsilon}}_1
&=
E_{21}\lambda_2^j x_2^\mathrm{in}[1+o(1)]
+
E_{21}\lambda_2^j \overline{\xi}_2[1+o(1)]
+
F_{21}\overline{\eta}_2[1+o(1)]
+
\mathsf{R}_3^{(4)},
\end{aligned}
\end{equation}
where
\begin{equation}
\label{eq-SLGshilsub}
\begin{aligned}
\mathsf{R}_\imath^{(4)}(\overline{\xi}_2,\overline{\eta}_2,\overline{\upsilon}_2,\varepsilon)
&=
O(\widehat{\lambda}_2^j + |\overline{\eta}_2|^2),
\qquad
\imath \in \{1,3\},
\\
\mathsf{R}_2^{(4)}(\overline{\xi}_2,\overline{\eta}_2,\overline{\upsilon}_2,\varepsilon)
&=
O(\widehat{\lambda}_2^j + |\overline{\eta}_2|^3),
\\
\mathsf{B}_{2,i}^{(4)}(\overline{\overline{\xi}}_1,\overline{\overline{\eta}}_1,\overline{\overline{\upsilon}}_1,\varepsilon)
&:=
\mathsf{B}_{2,i}^{(1)}(\overline{\overline{\xi}}_1 + x_1^\mathrm{in},\overline{\overline{\eta}}_1 + y_1^\mathrm{out},\overline{\overline{\upsilon}}_1 + u_1^\mathrm{in},\varepsilon).
\end{aligned}
\end{equation}

\subsubsection{First translation}

\begin{definition}
\label{def-weighted-remainder}
Write
\[
z_1 = (\xi_1,\eta_1,\upsilon_1,\overline{\xi}_2,\overline{\eta}_2,\overline{\upsilon}_2), \quad z_2 = (\overline{\xi}_2,\overline{\eta}_2,\overline{\upsilon}_2,\overline{\overline{\xi}}_1,\overline{\overline{\eta}}_1,\overline{\overline{\upsilon}}_1).
\]
A family $\mathsf{W}_{ij}(z,\varepsilon)$ ($z \in \{z_1, z_2\}$) is of \emph{weighted remainder type} if, on every bounded set $B$ in the $z$-space and for any finite integers $s$ and $l$ with $2 \le s \le r$ and $0 \le l \le r$, it is a finite sum of terms
\[
a(z,\varepsilon)\rho, \quad a(z,\varepsilon)m(z),
\]
where $\rho \in \{\widehat{\lambda}_1^i,\widehat{\gamma}_2^{-j},\widehat{\lambda}_2^j,\widehat{\gamma}_1^{-i}\}$, $m(z)$ is a monomial  in the components of $z$ of total degree at least $2$, and
\[
\|a\|_{C^{s-2}(B \times R_\mathrm{prm})}, \, \|a(\cdot, \varepsilon)\|_{C^l(B)} = O(1).
\]
\end{definition}

% \begin{remark}
% \label{rem-weighted-remainder-calculus}
% Weighted remainders are preserved under finite sums, multiplication by uniformly bounded coefficients, triangular near-identity coordinate changes with coefficients $O(|\lambda_1^i| + |\gamma_1^{-i}| + |\lambda_2^j| + |\gamma_2^{-j}|)$, and local elimination of scalar equations $w + q\,\Phi(w,z,\varepsilon) = L(z,\varepsilon) + \mathsf{W}(z,\varepsilon)$ with $q = o(1)$, $\Phi = O(1)$, $L$ affine in $z$, and $\mathsf{W}$ of weighted remainder type. In the last case, the affine part is absorbed into $L[1+o(1)]$.
% \end{remark}

\begin{lemma}
\label{lem-shift-1}
After enlarging $\kappa_0$ if necessary, there exists a translation
\[
(\xi_1,\eta_1,\upsilon_1,\overline{\xi}_2,\overline{\eta}_2,\overline{\upsilon}_2)
\longmapsto
(\xi_1 - \xi_1^*,\eta_1 - \eta_1^*,\upsilon_1 - \upsilon_1^*,\overline{\xi}_2 - \overline{\xi}_2^*,\overline{\eta}_2 - \overline{\eta}_2^*,\overline{\upsilon}_2 - \overline{\upsilon}_2^*).
\]
with
\begin{equation}
\label{eq-shift1-est}
\begin{aligned}
\xi_1^*,\ \eta_1^*,\ \upsilon_1^*,\ \overline{\xi}_2^*,\ \overline{\eta}_2^*,\ \overline{\upsilon}_2^*
&=
O(|\lambda_1^i| + |\gamma_1^{-i}| + |\lambda_2^j| + |\gamma_2^{-j}|),
\end{aligned}
\end{equation}
such that, in the translated coordinates, again denoted by the same symbols,
\begin{equation}
\label{eq-FLGerase}
\begin{aligned}
\overline{\xi}_2
&=
A_{12}\lambda_1^i \xi_1
+
B_{12}\eta_1
+
\mathsf{W}_1^{(5)},
\\
\gamma_2^{-j}\overline{\eta}_2
&=
C_{12}\lambda_1^i \xi_1
+
D_{12}\eta_1
+
\mathsf{W}_2^{(5)},
\\
\overline{\upsilon}_2
&=
E_{12}\lambda_1^i \xi_1
+
F_{12}\eta_1
+
\mathsf{W}_3^{(5)},
\end{aligned}
\end{equation}
and
\begin{equation}
\label{eq-SLGerase}
\begin{aligned}
\overline{\overline{\xi}}_1
&=
A_{21}\lambda_2^j \overline{\xi}_2
+
B_{21}\overline{\eta}_2
+
\mathsf{W}_1^{(6)},
\\
\gamma_1^{-i} \overline{\overline{\eta}}_1
&=
\bigl(
\mu_1
+
C_{21} \lambda_2^j x_2^\mathrm{in}
-
\gamma_1^{-i} y_1^\mathrm{out}
\bigr)
+
C_{21} \lambda_2^j \overline{\xi}_2
+
G_{21} \overline{\eta}_2^2
+
\mathsf{W}_2^{(6)},
\\
\overline{\overline{\upsilon}}_1
&=
E_{21}\lambda_2^j \overline{\xi}_2
+
F_{21}\overline{\eta}_2
+
\mathsf{W}_3^{(6)},
\end{aligned}
\end{equation}
where $\mathsf{W}_\ell^{(5)}(z_1,\varepsilon)$ and $\mathsf{W}_\ell^{(6)}(z_2,\varepsilon)$, for $\ell \in \{1,2,3\}$, are of weighted remainder type in the sense of Definition~\ref{def-weighted-remainder}.
\end{lemma}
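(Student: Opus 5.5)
The plan is to find the translation as the fixed point of the affine-plus-small system obtained from \eqref{eq-FLGshil} and \eqref{eq-SLGshil}, and then read off the remainders by Taylor expansion at the shifted point. Write the combined system \eqref{eq-FLGshil}--\eqref{eq-SLGshil} as
\[
L(z) + \ell_0 + \mathsf{N}(z,\varepsilon) = 0,
\]
where $L$ is the linear part (coefficients $A_{12}\lambda_1^i$, $B_{12}$, $\gamma_2^{-j}$, and so on), $\ell_0$ collects the constant terms, and $\mathsf{N}$ contains the $[1+o(1)]$ corrections, the $\mathsf{R}^{(3)},\mathsf{R}^{(4)}$ terms and the $\widehat{\gamma}^{-k}\mathsf{B}$ terms. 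The constant terms are $A_{12}\lambda_1^i x_1^\mathrm{in}$, $C_{12}\lambda_1^i x_1^\mathrm{in} - \gamma_2^{-j}y_2^\mathrm{out}$, $E_{12}\lambda_1^i x_1^\mathrm{in}$, $A_{21}\lambda_2^j x_2^\mathrm{in}$ and $E_{21}\lambda_2^j x_2^\mathrm{in}$. The exception is the $\eta_1$-equation of the second global map, whose constant $\mu_1 + C_{21}\lambda_2^j x_2^\mathrm{in} - \gamma_1^{-i}y_1^\mathrm{out}$ is kept explicitly rather than erased. All erased constants are $O(|\lambda_1^i|+|\gamma_1^{-i}|+|\lambda_2^j|+|\gamma_2^{-j}|)$.

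\textbf{Step 1: choose the shift.} I would pick $(\xi_1^*,\eta_1^*,\upsilon_1^*,\overline{\xi}_2^*,\overline{\eta}_2^*,\overline{\upsilon}_2^*)$ so that, after substituting $z\mapsto z+z^*$, the constant terms in the first, third, fourth and sixth equations vanish, and the second equation has no constant term. Concretely, I would solve successively:
\begin{itemize}
\item $\eta_1^*$ from the $\gamma_2^{-j}\overline{\eta}_2$-equation, using $D_{12}\neq0$ (Remark~\ref{rem-gcmeaning}), which gives $\eta_1^* = O(|\lambda_1^i|+|\gamma_2^{-j}|)$;
\item $\overline{\xi}_2^*$ and $\overline{\upsilon}_2^*$ from the first and third equations;
\item $\overline{\eta}_2^*$ from the fourth equation, using $B_{21}\neq0$;
\item $\xi_1^*$ and $\upsilon_1^*$ by identifying them with the erased constants of the output equations, since the same Shilnikov coordinates are used at input and output.
\end{itemize}
The system is a small perturbation of a triangular linear system with invertible diagonal ($D_{12}$, $B_{21}$, identity). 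So the contraction mapping principle, or the implicit function theorem applied in rescaled variables, yields a unique solution satisfying \eqref{eq-shift1-est}. I would take $\kappa_0$ large so that the $o(1)$ factors are small enough for this.

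\textbf{Step 2: expand at the shifted point.} I would Taylor-expand each equation in the new variables. Linear coefficients evaluated at $z^*$ differ from $A_{12}$, $B_{12}$, etc.\ by $O(\|z^*\|)+o(1)$. The products of these differences with the variables need care. Terms like $O(\|z^*\|)\,\eta_1$ are not monomials of degree $\ge2$, so I would absorb them differently depending on the variable:
\begin{itemize}
\item for the variables already carrying a small factor ($\lambda_1^i\xi_1$, $\lambda_2^j\overline{\xi}_2$), I would write the correction as $\rho\cdot a(z,\varepsilon)$, using \eqref{eq-hineq} to bound $|\lambda_\ell|^k\lambda_\mathrm{max}^k$ by $\widehat\lambda_\ell^k$;
\item for $\eta_1$ and $\overline{\eta}_2$, I would use instead that the $o(1)$ in \eqref{eq-FLGshil} actually comes from the $\widehat\lambda$, $\widehat\gamma$ terms of Lemma~\ref{lem-ILMs}, giving again $\rho$-type terms.
\end{itemize}
The quadratic-and-higher parts become $a(z,\varepsilon)m(z)$ with $\deg m\ge2$. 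The term $G_{21}\overline{\eta}_2^2$, expanded about $\overline{\eta}_2^*$, produces a linear term $2G_{21}\overline{\eta}_2^*\overline{\eta}_2$, which is $O(\rho)$ times a bounded coefficient. The $\widehat{\gamma}^{-k}\mathsf{B}$ terms on the left-hand sides are moved to the right, where they are of the form $\rho\,a$.

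\textbf{Step 3: regularity bounds.} Finally, I would verify the $C^{s-2}$ and $C^l$ bounds on $a$. These follow from the uniform bounds on $\mathsf{B}_{\imath,k}^{(\ell)}$ in Lemma~\ref{lem-ILMs} and the regularity statements in Proposition~\ref{prop-globalrepr}. The shift $z^*$ is $C^{r-2}$ in $\varepsilon$ by the implicit function theorem, with derivatives bounded uniformly in $(i,j)$. The main obstacle is this uniformity: derivatives in $\varepsilon$ of $\lambda_1^i$ produce factors of $i$. I would handle them by enlarging $\widehat{\lambda}_\ell$ and shrinking $\widehat{\gamma}_\ell$ via Remark~\ref{rem-hatssm}, so that polynomial factors in $i,j$ are absorbed into the slightly worse exponential rates $\rho$.
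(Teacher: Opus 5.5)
\textbf{Gap: the choice of $\overline{\eta}_2^*$.} Your Step~1 imposes one scalar condition too few, and the missing condition is the essential one. You have $2d_s+2$ unknowns, namely $(\xi_1^*,\eta_1^*,\upsilon_1^*,\overline{\xi}_2^*,\overline{\eta}_2^*,\overline{\upsilon}_2^*)$. You only require the constants of the first, second, third, fourth and sixth equations to vanish, which gives $2d_s+1$ conditions. You then use the fourth equation twice: once for $\overline{\eta}_2^*$ via $B_{21}\neq0$, and again for $\xi_1^*$. This cannot work. Since the output $\overline{\overline{\xi}}_1$ is shifted by the same $\xi_1^*$, the constant of the fourth equation is $A_{21}\lambda_2^jx_2^\mathrm{in}+A_{21}\lambda_2^j\overline{\xi}_2^*+B_{21}\overline{\eta}_2^*-\xi_1^*+\cdots$, a single relation containing both unknowns.

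In the paper, the first and third equations of \eqref{eq-SLGshil} fix $\xi_1^*$ and $\upsilon_1^*$. The shift $\overline{\eta}_2^*$ is fixed by a separate sixth condition: the coefficient of the linear $\overline{\eta}_2$-term in the $\overline{\overline{\eta}}_1$-equation must vanish at the new origin. Because that equation contains $G_{21}\overline{\eta}_2^2$, the condition reads $2G_{21}\overline{\eta}_2^*+(\text{small})=0$. It is solvable because $G_{21}\neq0$; this is a completing-the-square step.

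\textbf{Why the condition is needed.} The $\overline{\overline{\eta}}_1$-equation contains a linear term $\beta\overline{\eta}_2$ whose coefficient is of the size of the unhatted rates. For example, the $\overline{x}_{2j}(\overline{y}_{2j}-y_2^\mathrm{out})$ part of $\mathsf{R}^{(2)}_2$ with $\overline{x}_{2j}\approx\lambda_2^jx_2^\mathrm{in}$ gives $\beta\sim\lambda_2^j$, and your own shift adds $2G_{21}\overline{\eta}_2^*$.

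Your claim that $2G_{21}\overline{\eta}_2^*\overline{\eta}_2$ is ``$O(\rho)$'' is false. By \eqref{eq-shift1-est} the shift is only $O(|\lambda_1^i|+|\gamma_1^{-i}|+|\lambda_2^j|+|\gamma_2^{-j}|)$, while $\rho$ ranges over the strictly smaller hatted rates. So with your choice the second line of \eqref{eq-SLGerase} fails, because $\beta\overline{\eta}_2$ is not of weighted remainder type.

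The failure also propagates. Under \eqref{eq-rescaledXY}--\eqref{eq-rescaledXY2} this term becomes $\beta D_{12}\gamma_1^{i}\gamma_2^{j}\overline{Y}_2$ in the $\overline{\overline{Y}}_1$-equation. For $\beta\sim\lambda_2^j$ and $|M_2|\approx1$ its size is about $|\lambda_1|^{-i}$, so the H\'enon form could not be reached. The same mechanism is why the paper needs the second translation after the triangularization, which creates new linear $\overline{\eta}_2$-terms.

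\textbf{Smaller point.} In Step~2, the corrections to the $\eta_1$-coefficient do not all come from the $\widehat{\lambda},\widehat{\gamma}$ terms of Lemma~\ref{lem-ILMs}. The mixed derivative evaluated at $x_{1i}\approx\lambda_1^ix_1^\mathrm{in}$ gives an $O(|\lambda_1^i|)$ relative correction. It is harmless after rescaling, but it is not a $\rho$-term.
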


\begin{proof}
Let
\[
z^* := (\xi_1^*,\eta_1^*,\upsilon_1^*,\overline{\xi}_2^*,\overline{\eta}_2^*,\overline{\upsilon}_2^*)
\]
be the translation vector. We choose it so that the constants in \eqref{eq-FLGshil} and in the first and third equations of \eqref{eq-SLGshil} vanish, and so that the coefficient of $\overline{\eta}_2$ in the second equation of \eqref{eq-SLGshil} vanishes at the new origin.

These conditions define a self-map on $z^*$. The first and third equations of \eqref{eq-FLGshil} determine $\overline{\xi}_2^*$ and $\overline{\upsilon}_2^*$; since $D_{12} \neq 0$, the second equation determines $\eta_1^*$; the first and third equations of \eqref{eq-SLGshil} determine $\xi_1^*$ and $\upsilon_1^*$; and, because $G_{21} \neq 0$, the derivative condition for the second equation determines $\overline{\eta}_2^*$.

Using \eqref{eq-FLGshilsub}, \eqref{eq-SLGshilsub}, \eqref{eq-hineq}, and Lemma~\ref{lem-ILMs}, all constant terms have the size stated in \eqref{eq-shift1-est}, and every feedback coefficient is
\[
O(|\lambda_1^i| + |\gamma_1^{-i}| + |\lambda_2^j| + |\gamma_2^{-j}|).
\]
Hence the self-map sends the box from \eqref{eq-shift1-est} into itself and has Lipschitz constant $o(1)$. The contraction mapping theorem gives a unique fixed point $z^*$.

After translating by this fixed point, Taylor expansion keeps the displayed affine and quadratic terms explicit, and every other contribution is of weighted remainder type. In particular, the first-transition scalar relation is left in the implicit form used in \eqref{eq-FLGerase}. This gives \eqref{eq-FLGerase} and \eqref{eq-SLGerase}.
\end{proof}

\subsubsection{Triangularization}
Assume \eqref{eq-FLGerase}. Since
\[
J_{12} = A_{12}D_{12} - B_{12}C_{12} \neq 0,
\qquad
D_{12} \neq 0,
\]
by Remark~\ref{rem-gcmeaning}, apply the following successive triangular changes, dropping the superscript ``new'' after each step:
\begin{align*}
\eta_1^\mathrm{new}
&:= \eta_1 + \frac{C_{12}}{D_{12}}\lambda_1^i \xi_1,
\\
\overline{\xi}_2^\mathrm{new}
&:= \overline{\xi}_2 - \frac{B_{12}}{D_{12}}\gamma_2^{-j}\overline{\eta}_2,
\\
\overline{\upsilon}_2^\mathrm{new}
&:= \overline{\upsilon}_2
- \frac{E_{12}D_{12} - F_{12}C_{12}}{J_{12}}\overline{\xi}_2
- \frac{F_{12}}{D_{12}}\gamma_2^{-j}\overline{\eta}_2.
\end{align*}
They are well defined. One can verify the following lemma.

\begin{lemma}
\label{lem-triangularization}
In the new coordinates, again denoted by the same symbols,
\begin{equation}
\label{eq-FLGdiag3}
\begin{aligned}
\overline{\xi}_2
&=
\frac{J_{12}}{D_{12}}\lambda_1^i \xi_1
+
\mathsf{W}_1^{(7)},
\\
\gamma_2^{-j}\overline{\eta}_2
&=
D_{12}\eta_1
+
\mathsf{W}_2^{(7)},
\\
\overline{\upsilon}_2
&=
\mathsf{W}_3^{(7)}.
\end{aligned}
\end{equation}
and
\begin{equation}
\label{eq-SLGdiag}
\begin{aligned}
\overline{\overline{\xi}}_1
&=
A_{21} \lambda_2^j \overline{\xi}_2 + B_{21}\overline{\eta}_2
+
\mathsf{W}_1^{(8)},
\\
\gamma_1^{-i} \overline{\overline{\eta}}_1
&=
\bigl(
\mu_1
+
C_{21} \lambda_2^j x_2^\mathrm{in}
-
\gamma_1^{-i} y_1^\mathrm{out}
\bigr) \\
&\quad +
C_{21} \lambda_2^j \overline{\xi}_2
+
\left( \frac{B_{21} C_{12}}{D_{12}} \lambda_1^i \gamma_1^{-i}  + \frac{B_{12} C_{21}}{D_{12}} \lambda_2^j \gamma_2^{-j} \right) \overline{\eta}_2
+
G_{21} \overline{\eta}_2^2
+
\mathsf{W}_2^{(8)},
\\
\overline{\overline{\upsilon}}_1
&=
E_{21} \lambda_2^j \overline{\xi}_2 + F_{21}\overline{\eta}_2
+
\mathsf{W}_3^{(8)},
\end{aligned}
\end{equation}
where, for each $\ell \in \{1,2,3\}$, both $\mathsf{W}_\ell^{(7)}(z, \varepsilon)$ and $\mathsf{W}_\ell^{(8)}(z, \varepsilon)$ are of weighted remainder type.
\end{lemma}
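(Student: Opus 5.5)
The plan is direct substitution. Write the three changes as $\eta_1 = \eta_1^{\mathrm{new}} - \frac{C_{12}}{D_{12}}\lambda_1^i\xi_1$, $\overline{\xi}_2 = \overline{\xi}_2^{\mathrm{new}} + \frac{B_{12}}{D_{12}}\gamma_2^{-j}\overline{\eta}_2$, and $\overline{\upsilon}_2 = \overline{\upsilon}_2^{\mathrm{new}} + (\dots)$. Insert these into \eqref{eq-FLGerase} and \eqref{eq-SLGerase} one at a time, in the stated order. Each change is linear and triangular, and its off-diagonal coefficients are $O(|\lambda_1^i|+|\gamma_2^{-j}|)$. Its inverse is therefore again triangular with coefficients of the same form. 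The first step is to record that the class of weighted remainder type functions (Definition~\ref{def-weighted-remainder}) is preserved under such substitutions. A monomial of degree $\ge 2$ in $z$ becomes a sum of monomials of degree $\ge 2$ with uniformly bounded coefficients. A term $a\rho$ stays of the form $a\rho$. The $C^{s-2}$ and $C^l$ bounds on bounded sets persist because the new coefficients are $C^{r-2}$ in $\varepsilon$ and polynomial in $z$.

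Next I treat the first-transition equations \eqref{eq-FLGerase}. The substitution $\eta_1 \mapsto \eta_1 - \frac{C_{12}}{D_{12}}\lambda_1^i\xi_1$ turns the second equation into $\gamma_2^{-j}\overline{\eta}_2 = D_{12}\eta_1 + \mathsf{W}$. It turns the first into $\overline{\xi}_2 = (A_{12} - B_{12}C_{12}/D_{12})\lambda_1^i\xi_1 + B_{12}\eta_1 + \mathsf{W} = \frac{J_{12}}{D_{12}}\lambda_1^i\xi_1 + B_{12}\eta_1 + \mathsf{W}$. Using the second equation, I replace $B_{12}\eta_1$ by $\frac{B_{12}}{D_{12}}(\gamma_2^{-j}\overline{\eta}_2 - \mathsf{W}_2)$. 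Then the change $\overline{\xi}_2^{\mathrm{new}} = \overline{\xi}_2 - \frac{B_{12}}{D_{12}}\gamma_2^{-j}\overline{\eta}_2$ removes this term and gives the first line of \eqref{eq-FLGdiag3}. For the third equation, I similarly express $\lambda_1^i\xi_1$ through $\overline{\xi}_2^{\mathrm{new}}$ (coefficient $D_{12}/J_{12}$) and $\eta_1$ through $\gamma_2^{-j}\overline{\eta}_2$. The coefficient of $\lambda_1^i\xi_1$ is $E_{12} - F_{12}C_{12}/D_{12} = (E_{12}D_{12}-F_{12}C_{12})/D_{12}$, so the terms removed are exactly $\frac{E_{12}D_{12}-F_{12}C_{12}}{J_{12}}\overline{\xi}_2 + \frac{F_{12}}{D_{12}}\gamma_2^{-j}\overline{\eta}_2$. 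This leaves $\overline{\upsilon}_2 = \mathsf{W}_3^{(7)}$.

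Then I handle the second-transition equations \eqref{eq-SLGerase}. Substitute $\overline{\xi}_2 = \overline{\xi}_2^{\mathrm{new}} + \frac{B_{12}}{D_{12}}\gamma_2^{-j}\overline{\eta}_2$. In the $\overline{\overline{\xi}}_1$ and $\overline{\overline{\upsilon}}_1$ equations this produces the extra terms $A_{21}\frac{B_{12}}{D_{12}}\lambda_2^j\gamma_2^{-j}\overline{\eta}_2$ and $E_{21}\frac{B_{12}}{D_{12}}\lambda_2^j\gamma_2^{-j}\overline{\eta}_2$. These are $o(1)$ multiples of $\overline{\eta}_2$, and I will absorb them into the leading coefficients $B_{21}$ and $F_{21}$. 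Since the statement displays these coefficients exactly, this absorption is a point to be careful about. Either the weighted-remainder class tolerates such terms in the same way the lemma is meant to be read, or they are carried into the $[1+o(1)]$ bookkeeping that the rescaling later uses. In the $\overline{\overline{\eta}}_1$ equation the term $C_{21}\lambda_2^j\overline{\xi}_2$ yields $\frac{B_{12}C_{21}}{D_{12}}\lambda_2^j\gamma_2^{-j}\overline{\eta}_2$, which is the second displayed linear coefficient.

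The main obstacle is the other linear coefficient, $\frac{B_{21}C_{12}}{D_{12}}\lambda_1^i\gamma_1^{-i}\overline{\eta}_2$. It does not come from the substitution in the second transition. It must come from rewriting the variables of the left-hand side, since $\overline{\overline{\eta}}_1$ lives in the same Shilnikov coordinate as $\eta_1$, whose change of variables involves $\xi_1$. Concretely, the change $\eta_1^{\mathrm{new}} = \eta_1 + \frac{C_{12}}{D_{12}}\lambda_1^i\xi_1$ applied at the image point gives $\overline{\overline{\eta}}_1^{\mathrm{new}} = \overline{\overline{\eta}}_1 + \frac{C_{12}}{D_{12}}\lambda_1^i\overline{\overline{\xi}}_1$. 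Multiplying by $\gamma_1^{-i}$ and inserting the first line $\overline{\overline{\xi}}_1 = B_{21}\overline{\eta}_2 + \dots$ produces exactly $\frac{B_{21}C_{12}}{D_{12}}\lambda_1^i\gamma_1^{-i}\overline{\eta}_2$, plus a term $\lambda_1^i\gamma_1^{-i}\lambda_2^j\overline{\xi}_2$ and remainders. I will check that these leftover pieces are of weighted remainder type. This is where \eqref{eq-hineq} is used to dominate the products of small factors by the $\widehat{\lambda}$, $\widehat{\gamma}$ powers. I expect this bookkeeping, together with the consistency of applying the same coordinate change on both ends of the return map, to be the delicate part of the proof.
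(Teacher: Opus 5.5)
Your direct substitution is the verification the paper leaves to the reader (it only says ``one can verify''), including the key point that the $\frac{B_{21}C_{12}}{D_{12}}\lambda_1^i\gamma_1^{-i}\overline{\eta}_2$ term comes from applying the $\eta_1$-change to the image coordinate, $\overline{\overline{\eta}}_1 \mapsto \overline{\overline{\eta}}_1 + \frac{C_{12}}{D_{12}}\lambda_1^i\overline{\overline{\xi}}_1$. The loose end you flag closes without absorbing anything into $B_{21}$ or $F_{21}$, which would contradict the exact coefficients in the statement: by \eqref{eq-hineq}, $|\lambda_2\gamma_2^{-1}| \le |\lambda_2|\lambda_\mathrm{max} < \widehat{\lambda}_2$, so $\frac{A_{21}B_{12}}{D_{12}}\lambda_2^j\gamma_2^{-j}\overline{\eta}_2$ and $\frac{E_{21}B_{12}}{D_{12}}\lambda_2^j\gamma_2^{-j}\overline{\eta}_2$ are of the form $a\,\widehat{\lambda}_2^j$ and go straight into $\mathsf{W}_1^{(8)}$ and $\mathsf{W}_3^{(8)}$ (the $\overline{\eta}_2$-coefficient shown in the middle equation is equally small and is kept explicit only because the second translation removes it).
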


\subsubsection{Second translation}
Arguing as in Lemma~\ref{lem-shift-1}, one obtains the following lemma.

\begin{lemma}
\label{lem-shift-2}
After enlarging $\kappa_0$ if necessary, there exists a second translation of size
\begin{align*}
\xi_1^*,\ \eta_1^*,\ \upsilon_1^*,\ \overline{\xi}_2^*,\ \overline{\eta}_2^*,\ \overline{\upsilon}_2^*
&=
O(|\lambda_1^i| + |\gamma_1^{-i}| + |\lambda_2^j| + |\gamma_2^{-j}|),
\end{align*}
such that, in the translated coordinates,
\begin{equation}
\label{eq-pre-rescale}
\begin{aligned}
\overline{\xi}_2 &= \frac{J_{12}}{D_{12}}\lambda_1^i \xi_1 + \mathsf{W}_1^{(9)},
\qquad
\gamma_2^{-j}\overline{\eta}_2 = D_{12}\eta_1 + \mathsf{W}_2^{(9)},
\qquad
\overline{\upsilon}_2 = \mathsf{W}_3^{(9)}, \\
\overline{\overline{\xi}}_1 &= A_{21} \lambda_2^j \overline{\xi}_2 + B_{21}\overline{\eta}_2 + \mathsf{W}_1^{(10)},
\\
\gamma_1^{-i}\overline{\overline{\eta}}_1
&=
\bigl( \mu_1 + C_{21} \lambda_2^j x_2^\mathrm{in} - \gamma_1^{-i}y_1^\mathrm{out} \bigr)
+
C_{21}\lambda_2^j \overline{\xi}_2
+
G_{21}\overline{\eta}_2^2
+
\mathsf{W}_2^{(10)},
\\
\overline{\overline{\upsilon}}_1 &= E_{21} \lambda_2^j \overline{\xi}_2 + F_{21}\overline{\eta}_2 + \mathsf{W}_3^{(10)}.
\end{aligned}
\end{equation}
Moreover, for each $\ell \in \{1,2,3\}$, both $\mathsf{W}_\ell^{(9)}(z_1,\varepsilon)$ and $\mathsf{W}_\ell^{(10)}(z_2,\varepsilon)$ are of weighted remainder type in the sense of Definition~\ref{def-weighted-remainder}.
\end{lemma}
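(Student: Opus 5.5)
The plan is to repeat the fixed-point construction of Lemma~\ref{lem-shift-1}, starting from the triangularized system \eqref{eq-FLGdiag3}--\eqref{eq-SLGdiag} of Lemma~\ref{lem-triangularization}. The only term in \eqref{eq-SLGdiag} not already in the target form \eqref{eq-pre-rescale} is the linear term
\[
\beta_{ij}\,\overline{\eta}_2,
\qquad
\beta_{ij} := \frac{B_{21}C_{12}}{D_{12}}\lambda_1^i\gamma_1^{-i} + \frac{B_{12}C_{21}}{D_{12}}\lambda_2^j\gamma_2^{-j},
\]
in the second equation. Since $G_{21}\neq 0$, completing the square suggests the leading choice $\overline{\eta}_2^* \approx -\beta_{ij}/(2G_{21})$, which is of the size claimed in the statement.

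The translation has to be compatible with the return map. The variables $(\overline{\overline{\xi}}_1,\overline{\overline{\eta}}_1,\overline{\overline{\upsilon}}_1)$ are the same Shilnikov coordinates as $(\xi_1,\eta_1,\upsilon_1)$, so they must be shifted by the same vector. Shifting $\overline{\eta}_2$ creates new constants in the other equations:
\begin{itemize}
\item $B_{21}\overline{\eta}_2^*$ and $F_{21}\overline{\eta}_2^*$ in the first and third equations of \eqref{eq-SLGdiag};
\item $\gamma_2^{-j}\overline{\eta}_2^*$ in the second equation of \eqref{eq-FLGdiag3}.
\end{itemize}
I will therefore impose six conditions on the unknown translation vector $z^*$:
\begin{itemize}
\item the three constants of the first transition equations \eqref{eq-FLGdiag3} vanish;
\item the constants in the first and third equations of \eqref{eq-SLGdiag} vanish;
\item the $\overline{\eta}_2$-derivative of the right-hand side of the second equation of \eqref{eq-SLGdiag} vanishes at the new origin.
\end{itemize}
In the same order as in Lemma~\ref{lem-shift-1}, these determine, respectively, $\overline{\xi}_2^*$ and $\overline{\upsilon}_2^*$ (from the first and third equations of \eqref{eq-FLGdiag3}), $\eta_1^*$ (using $D_{12}\neq0$), $\xi_1^*$ and $\upsilon_1^*$, and $\overline{\eta}_2^*$ (using $G_{21}\neq 0$). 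Each feedback coefficient carries a factor among $\lambda_1^i,\gamma_1^{-i},\lambda_2^j,\gamma_2^{-j}$, or is a derivative of a weighted remainder, hence is $o(1)$. So this self-map of a box of radius $O(|\lambda_1^i|+|\gamma_1^{-i}|+|\lambda_2^j|+|\gamma_2^{-j}|)$ is a contraction for $\kappa_0$ large, and the contraction mapping theorem gives the translation vector.

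The step I expect to be the main obstacle is checking that, after translation, the constant in the second equation of \eqref{eq-SLGdiag} is still exactly $\mu_1 + C_{21}\lambda_2^j x_2^\mathrm{in} - \gamma_1^{-i}y_1^\mathrm{out}$ up to a weighted remainder. Translating generates extra constants such as
\[
G_{21}(\overline{\eta}_2^*)^2,
\qquad
\beta_{ij}\overline{\eta}_2^*,
\qquad
C_{21}\lambda_2^j\overline{\xi}_2^*,
\qquad
\gamma_1^{-i}\,(\text{shift of }\overline{\overline{\eta}}_1).
\]
Each of these is a product of at least two small factors, for instance $\lambda_1^i\gamma_1^{-i}\cdot\lambda_1^i\gamma_1^{-i}$ or $\gamma_1^{-i}\cdot O(|\lambda_1^i|+\dots)$. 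I would bound each such product by one of the $\rho\in\{\widehat{\lambda}_1^i,\widehat{\gamma}_2^{-j},\widehat{\lambda}_2^j,\widehat{\gamma}_1^{-i}\}$ using \eqref{eq-hineq}. For example, $|\gamma_1^{-i}\lambda_1^i| \le (|\gamma_1^{-1}|\lambda_\mathrm{max})^i < \widehat{\gamma}_1^{-i}$. For mixed products involving both $i$ and $j$ I would use $(i,j)\in\mathcal I$, so that $i$ and $j$ are comparable. These products can then be absorbed as $a\rho$ terms of weighted remainder type.

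The same reasoning shows that the other new linear terms have coefficients of the same smallness and fit Definition~\ref{def-weighted-remainder}. Examples are the terms produced in the equation for $\overline{\overline{\eta}}_1$ by shifting $\overline{\xi}_2$, and the cross terms coming from the remainders $\mathsf{W}^{(7)}$ and $\mathsf{W}^{(8)}$. Finally, translating a weighted remainder by $O(\text{small})$ keeps it of weighted remainder type: monomials of degree at least $2$ re-expand into monomials of degree at least $2$, plus linear and constant parts with small coefficients that are handled as above. The $C^{s-2}$ and $C^l$ bounds are preserved because the translation vector inherits the regularity in $\varepsilon$ from the contraction argument.
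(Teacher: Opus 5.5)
Your proposal is the paper's argument: the paper proves Lemma~\ref{lem-shift-2} only by ``arguing as in Lemma~\ref{lem-shift-1}'', and your contraction scheme with the same six normalization conditions, started from Lemma~\ref{lem-triangularization}, is that argument written out, so at the level of Definition~\ref{def-weighted-remainder} it is fine. One caution applies equally to the paper's sketch. The bound you quote, $|\lambda_1^i\gamma_1^{-i}|<\widehat{\gamma}_1^{-i}$ (and likewise $|\lambda_2^j\gamma_2^{-j}|<\widehat{\gamma}_2^{-j}$), already makes $\beta_{ij}\overline{\eta}_2$ itself of weighted remainder type, so ``bounded by some $\rho$'' cannot be what gives the translation its point. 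The relevant scale for the $\overline{\overline{\eta}}_1$-equation is $\gamma_1^{-2i}\gamma_2^{-2j}$, the one used in Lemma~\ref{lem-rescaled-LG}. At that scale, the constants you absorb, namely $-\beta_{ij}^2/(4G_{21})$ and those produced by the compensating shifts of $\eta_1$ and $\overline{\xi}_2$, amount to adding $\tfrac14 b_{ij}^2-\tfrac12(1+M_2)b_{ij}$ to $M_1$, where $b_{ij}:=D_{12}\beta_{ij}\gamma_1^i\gamma_2^j=B_{21}C_{12}\lambda_1^i\gamma_2^j+B_{12}C_{21}\lambda_2^j\gamma_1^i$. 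This $b_{ij}$ need not tend to $0$, because $|\lambda_1^i\gamma_2^j|\,|\lambda_2^j\gamma_1^i|=|M_2|/|B_{21}C_{21}J_{12}|$ is bounded away from $0$. So these constants have to be kept in the constant term (that is, folded into $M_1$) rather than put into $\mathsf{W}_2^{(10)}$.
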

\subsubsection{Rescaling}

Define the rescaled coordinates by
\begin{equation}
\label{eq-rescaledXY}
\begin{aligned}
\xi_1 &= -\frac{B_{21}}{G_{21}D_{12}}\gamma_1^{-i}\gamma_2^{-j} X_1,
\quad
\eta_1 = -\frac{1}{G_{21}D_{12}^2}\gamma_1^{-i}\gamma_2^{-2j} Y_1,
\quad
\upsilon_1 - \frac{F_{21}}{B_{21}}\xi_1 = -\frac{B_{21}}{G_{21}D_{12}}\gamma_1^{-i}\gamma_2^{-j} U_1.
\end{aligned}
\end{equation}
and
\begin{equation}
\label{eq-rescaledXY2}
\begin{aligned}
\overline{\xi}_2 &= -\frac{J_{12}B_{21}}{G_{21}D_{12}^2}\lambda_1^i\gamma_1^{-i}\gamma_2^{-j}\overline{X}_2,
\quad
\overline{\eta}_2 = -\frac{1}{G_{21}D_{12}}\gamma_1^{-i}\gamma_2^{-j}\overline{Y}_2,
\quad
\overline{\upsilon}_2 = -\frac{B_{21}}{G_{21}D_{12}}\gamma_1^{-i}\gamma_2^{-j}\overline{U}_2.
\end{aligned}
\end{equation}
For the image variables, use \eqref{eq-rescaledXY} for $(\overline{\overline{\xi}}_1,\overline{\overline{\eta}}_1,\overline{\overline{\upsilon}}_1)$ and denote the corresponding rescaled coordinates by $(\overline{\overline{X}}_1,\overline{\overline{Y}}_1,\overline{\overline{U}}_1)$. Also set
\begin{equation}
\label{eq-M1M2}
\begin{aligned}
M_1
&:=
-D_{12}^2 G_{21}
\left(
\mu_1
+
C_{21}x_2^\mathrm{in}\lambda_2^j
-
y_1^\mathrm{out}\gamma_1^{-i}
\right)
\gamma_1^{2i}\gamma_2^{2j},
\\
M_2
&:=
-B_{21}C_{21}J_{12}\lambda_1^i\gamma_1^i\lambda_2^j\gamma_2^j.
\end{aligned}
\end{equation}
Then, one can find the following lemma.

\begin{lemma}
\label{lem-rescaled-LG}
On every bounded set in the rescaled variables,
\begin{align*}
\overline{X}_2 &= X_1 + \mathsf{S}_1^{(11)}, \\
\overline{Y}_2 &= Y_1 + \mathsf{S}_2^{(11)}, \\
\overline{U}_2 &= \mathsf{S}_3^{(11)},
\end{align*}
and
\begin{align*}
\overline{\overline{X}}_1
&=
\frac{A_{21}J_{12}}{D_{12}}\lambda_1^i\lambda_2^j\overline{X}_2
+
\overline{Y}_2
+
\mathsf{S}_1^{(12)},
\\
\overline{\overline{Y}}_1
&=
M_1
-
M_2 \overline{X}_2
-
\overline{Y}_2^2
+
\mathsf{S}_2^{(12)},
\\
\overline{\overline{U}}_1
&=
\mathsf{S}_3^{(12)}.
\end{align*}
Here, for each $\ell \in \{1,2,3\}$, each bounded set $B_1$ in the
\[
Z_1 = (X_1, Y_1, U_1, \overline{X}_2, \overline{Y}_2, \overline{U}_2)
\]
space, each bounded set $B_2$ in the
\[
Z_2 = (\overline{X}_2, \overline{Y}_2, \overline{U}_2,\overline{\overline{X}}_1, \overline{\overline{Y}}_1, \overline{\overline{U}}_1)
\]
space, for any finite integers $s$ and $l$ with $2 \le s \le r$ and $0 \le l \le r$, $\mathsf{S}_\ell^{(11)}(Z_1, \varepsilon)$ and $\mathsf{S}_\ell^{(12)}(Z_2, \varepsilon)$ satisfy
\[
\|\mathsf{S}_\ell^{(11)}\|_{C^{s-2}(B_1 \times R_\mathrm{prm})}, \, \|\mathsf{S}_\ell^{(11)}(\cdot, \varepsilon)\|_{C^l(B_1)}, \, \|\mathsf{S}_\ell^{(12)}\|_{C^{s-2}(B_2 \times R_\mathrm{prm})}, \, \|\mathsf{S}_\ell^{(12)}(\cdot, \varepsilon)\|_{C^l(B_2)} = o(1).
\]
\end{lemma}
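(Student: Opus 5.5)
The plan is to substitute the rescalings \eqref{eq-rescaledXY}, \eqref{eq-rescaledXY2} directly into the relations \eqref{eq-pre-rescale} of Lemma~\ref{lem-shift-2}. I will check that the linear and quadratic parts produce exactly the displayed coefficients, and then show that every weighted remainder term turns into an $o(1)$ term on bounded sets. Throughout, all scaling factors are built from $\lambda_1^i,\gamma_1^{-i},\lambda_2^j,\gamma_2^{-j}$. Since $(i,j)\in\mathcal I$, we have $|i+\mu_2^*j|\le 1$, which combined with $\mu_2(\varepsilon)\to\mu_2^*$ gives $\sigma_1^i\sigma_2^j=O(1)$ uniformly near $\varepsilon^*$. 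Consequently $\lambda_1^i\gamma_1^i\lambda_2^j\gamma_2^j$ stays bounded, and $M_2=O(1)$.

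For the first transition, I insert $\xi_1,\eta_1$ into $\overline{\xi}_2 = (J_{12}/D_{12})\lambda_1^i\xi_1+\mathsf{W}_1^{(9)}$. The factor $-(J_{12}B_{21}/(G_{21}D_{12}^2))\lambda_1^i\gamma_1^{-i}\gamma_2^{-j}$ matches the one in \eqref{eq-rescaledXY2}, which gives $\overline{X}_2=X_1+\dots$. Similarly, $\gamma_2^{-j}\overline{\eta}_2=D_{12}\eta_1+\dots$ becomes $\overline{Y}_2=Y_1+\dots$ after multiplying by $\gamma_2^{j}$. For the second transition, dividing the $\overline{\overline{\xi}}_1$ equation by the $\xi_1$-scale gives $\overline{\overline{X}}_1=(A_{21}J_{12}/D_{12})\lambda_1^i\lambda_2^j\overline{X}_2+\overline{Y}_2+\dots$. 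Multiplying the $\overline{\overline{\eta}}_1$ equation by $-G_{21}D_{12}^2\gamma_1^{2i}\gamma_2^{2j}$ turns the constant into $M_1$. The term $C_{21}\lambda_2^j\overline{\xi}_2$ becomes $-M_2\overline{X}_2$, since $-G_{21}D_{12}^2\gamma_1^{2i}\gamma_2^{2j}\cdot C_{21}\lambda_2^j\cdot(-J_{12}B_{21}/(G_{21}D_{12}^2))\lambda_1^i\gamma_1^{-i}\gamma_2^{-j} = B_{21}C_{21}J_{12}\lambda_1^i\gamma_1^i\lambda_2^j\gamma_2^j=-M_2$. Likewise $G_{21}\overline{\eta}_2^2$ becomes $-\overline{Y}_2^2$. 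For the $U$-components, the shift $\upsilon_1-(F_{21}/B_{21})\xi_1$ in \eqref{eq-rescaledXY} is designed so that $F_{21}\overline{\eta}_2$ cancels against $(F_{21}/B_{21})B_{21}\overline{\eta}_2$ inside the $\overline{\overline{\xi}}_1$ relation. What remains is $E_{21}\lambda_2^j\overline{\xi}_2$ minus $(F_{21}/B_{21})A_{21}\lambda_2^j\overline{\xi}_2$, which after rescaling is $O(|\lambda_2^j|)=o(1)$. Hence $\overline{\overline{U}}_1=\mathsf{S}_3^{(12)}$ and $\overline{U}_2=\mathsf{S}_3^{(11)}$.

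The main obstacle is bounding the remainders uniformly in $C^{s-2}$ jointly with $\varepsilon$ and in $C^l$ for fixed $\varepsilon$. By Definition~\ref{def-weighted-remainder}, each $\mathsf W$ is a sum of terms $a\rho$ and $a\,m(z)$ with $\deg m\ge 2$. After substitution, each variable carries a small factor. Specifically, $\xi_1,\upsilon_1$ carry $\gamma_1^{-i}\gamma_2^{-j}$, $\eta_1$ carries $\gamma_1^{-i}\gamma_2^{-2j}$, $\overline{\eta}_2,\overline{\upsilon}_2$ carry $\gamma_1^{-i}\gamma_2^{-j}$, and $\overline{\xi}_2$ carries an extra factor $\lambda_1^i$. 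I then divide by the output scale: $\gamma_1^{-i}\gamma_2^{-j}$ for $X,U$ components, $\gamma_1^{-i}\gamma_2^{-2j}$ for $\overline{Y}_2$ (after the factor $\gamma_2^{-j}$), and $\gamma_1^{-2i}\gamma_2^{-2j}$ for $\overline{\overline{Y}}_1$ (after the factor $\gamma_1^{-i}$).

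I would treat the constant terms $a\rho$ using \eqref{eq-hineq}. Its inequalities give $\widehat\lambda_\ell<|\lambda_\ell|\lambda_{\max}$-type gaps, and with the constraint $\sigma_1^i\sigma_2^j=O(1)$ every ratio $\rho/(\text{scale})$ is a product of factors like $(\widehat\lambda_1/|\lambda_1|)^i$ times bounded quantities, hence $o(1)$. Here I may need to enlarge $\widehat\lambda_\ell$ or shrink $\widehat\gamma_\ell$ via Remark~\ref{rem-hatssm}. Quadratic monomials gain at least one additional small scale factor relative to the output, so they are $o(1)$; the one term that stays $O(1)$, namely $\overline{\eta}_2^2$, has already been made explicit.

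For derivatives, differentiating in the rescaled variables multiplies by scale factors, which only helps. Differentiating in $\varepsilon$ hits powers such as $\gamma_1^{i}$ and produces polynomial factors $i,j$. These are absorbed because the gains are exponential, but only up to order $s-2$, which is why the norm is $C^{s-2}$ jointly with $\varepsilon$. The $C^l$ bound for fixed $\varepsilon$ follows from the same count without $\varepsilon$-derivatives.

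Finally, the implicit relation $\gamma_2^{-j}\overline{\eta}_2=D_{12}\eta_1+\mathsf W_2^{(9)}$ and the similar one in the second transition are solved by the implicit function theorem in the rescaled variables. Since the perturbation is $o(1)$ in $C^1$, this preserves the $o(1)$ estimates.
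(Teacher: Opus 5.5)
Your leading-order algebra is right, and this substitution is all the paper itself records (it only says ``one can find''). The scalings \eqref{eq-rescaledXY}, \eqref{eq-rescaledXY2} turn the displayed parts of \eqref{eq-pre-rescale} into $\overline X_2=X_1$, $\overline Y_2=Y_1$, $\frac{A_{21}J_{12}}{D_{12}}\lambda_1^i\lambda_2^j\overline X_2+\overline Y_2$ and $-M_2\overline X_2-\overline Y_2^2$, and the $F_{21}/B_{21}$-shift removes $F_{21}\overline\eta_2$ from the $U$-relation.

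\textbf{Constant terms.} The gap is in the remainder estimate, which is the real content of the lemma. You read \eqref{eq-hineq} backwards. It gives lower bounds $|\lambda_\ell|\lambda_{\max}<\widehat\lambda_\ell$ and $|\gamma_\ell^{-1}|\lambda_{\max}<\widehat\gamma_\ell^{-1}$, so that products such as $\lambda_1^i\gamma_1^{-i}$ can be absorbed into $O(\widehat\gamma_1^{-i})$. It cannot make $\rho$ small against the rescaling factors, and in fact the opposite holds. Since $\lambda_{\max}\ge|\gamma_\ell|^{-1}$, a constant $\widehat\gamma_1^{-i}$ in the $\overline{\overline{\eta}}_1$-relation exceeds $|\gamma_2|^{2j}$ after multiplication by $\gamma_1^{2i}\gamma_2^{2j}$. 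Likewise a constant $\widehat\gamma_2^{-j}$ in the $\overline\eta_2$-relation exceeds $|\gamma_1|^{i}$ after rescaling. Such constants are genuinely present. The shift $\eta_1^*\approx(\gamma_2^{-j}y_2^{\mathrm{out}}-C_{12}\lambda_1^ix_1^{\mathrm{in}})/D_{12}$ of Lemma~\ref{lem-shift-1} must also be applied to $\overline{\overline{\eta}}_1$. This inserts $-\gamma_1^{-i}\eta_1^*$ into the constant of the $\overline{\overline{\eta}}_1$-relation, and after rescaling its $y_2^{\mathrm{out}}$-part alone is of order $|\gamma_1^i\gamma_2^j|$, already for linear local maps. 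So constant terms cannot be estimated; they must be eliminated:
\begin{itemize}
\item in five of the relations, by the choice of the translations (a property you must carry along, since Definition~\ref{def-weighted-remainder} does not record it);
\item in the $\overline{\overline{Y}}_1$-relation, by absorbing the entire constant into $M_1$.
\end{itemize}
With $M_1$ literally as in \eqref{eq-M1M2}, $\mathsf S_2^{(12)}$ is not $o(1)$. $M_1$ must be $-D_{12}^2G_{21}\gamma_1^{2i}\gamma_2^{2j}$ times the full constant term, e.g.\ with $y_1^{\mathrm{out}}$ and $x_2^{\mathrm{in}}$ replaced by $y_1^{\mathrm{out}}+o(1)$ and $x_2^{\mathrm{in}}+o(1)$. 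This is harmless later, since $\partial_{\mu_1}$ still dominates.

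\textbf{Non-constant terms.} Your generic count also fails for terms vanishing at the origin. In the $\overline{\overline{Y}}_1$-relation the output scale is $\gamma_1^{-2i}\gamma_2^{-2j}$. The monomials $\overline\eta_2^2$, $\overline\eta_2\overline\upsilon_2$, $\overline\upsilon_2^2$, $\overline\eta_2\overline{\overline{\xi}}_1$, $\overline{\overline{\xi}}_1\overline{\overline{\upsilon}}_1,\dots$ all have exactly that scale. They are therefore $O(1)$, not $o(1)$, whenever they occur with a bounded coefficient, which Definition~\ref{def-weighted-remainder} permits. Linear terms carrying a $\rho$ are not automatically small either:
\begin{itemize}
\item a coefficient $\widehat\lambda_1^i$ of $\xi_1$ in the $\overline\eta_2$-relation rescales to $(\widehat\lambda_1/|\lambda_1|)^i|\lambda_1^i\gamma_2^j|$;
\item a coefficient $\widehat\lambda_2^j$ of $\overline\upsilon_2$ in the $\overline{\overline{\eta}}_1$-relation rescales to $\widehat\lambda_2^j|\gamma_1^i\gamma_2^j|\asymp(\widehat\lambda_2/|\lambda_2|)^j|\lambda_1|^{-i}$.
\end{itemize}
The relation $\sigma_1^i\sigma_2^j\asymp1$ does not bound $|\lambda_1^i\gamma_2^j|$ or $|\lambda_1|^{-i}$. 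To get $o(1)$ you must return to Lemma~\ref{lem-ILMs} and Proposition~\ref{prop-globalrepr}. There you identify which variables actually enter each remainder and with which small factor, for instance $\overline\upsilon_2$ and $\overline{\overline{\xi}}_1$ enter the $\overline{\overline{\eta}}_1$-relation only through $\widehat\lambda_2^j$ and $\widehat\gamma_1^{-i}$. You then check every resulting exponent combination; the abstract weighted-remainder class is too coarse for this.

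\textbf{Parameter range.} $\sigma_1^i\sigma_2^j=O(1)$ does not hold uniformly near $\varepsilon^*$. If $\mu_2(\varepsilon)\neq\mu_2^*$, the exponent $i+\mu_2(\varepsilon)j$ is unbounded on $\mathcal I$. It holds on $\Delta_{ij}$ only because $M_2$ is bounded there. The estimates can therefore be claimed for $\varepsilon\in\Delta_{ij}$, not over $B\times R_{\mathrm{prm}}$.
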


\subsubsection{Final composition}

Recall that in Lemma~\ref{lem-rescaled-LG} the terms $\mathsf{S}_\ell^{(11)}$, $\mathsf{S}_\ell^{(12)}$ depend on
\[
(X_1, Y_1, U_1, \overline{X}_2, \overline{Y}_2, \overline{U}_2), \quad (\overline{X}_2, \overline{Y}_2, \overline{U}_2,\overline{\overline{X}}_1, \overline{\overline{Y}}_1, \overline{\overline{U}}_1),
\]
respectively. On every bounded set of values $(M_1, M_2)$, one can resolve the first three equations with respect to $(\overline{X}_2, \overline{Y}_2, \overline{U}_2)$, and the resolved equations are still of the same form, with new terms satisfying the same $o(1)$ estimates. The same argument resolves the last three equations with respect to $(\overline{\overline{X}}_1, \overline{\overline{Y}}_1, \overline{\overline{U}}_1)$.

Substituting the first resolved equations into the second resolved equations, we obtain
\begin{align*}
\overline{\overline{X}}_1 &= Y_1 + \mathsf{S}_1^{(13)}, \\
\overline{\overline{Y}}_1 &= M_1 - M_2 X_1 - Y_1^2 + \mathsf{S}_2^{(13)}, \\
\overline{\overline{U}}_1 &= \mathsf{S}_3^{(13)},
\end{align*}
where, for each $\ell \in \{1,2,3\}$, $\mathsf{S}_\ell^{(13)}(X_1, Y_1, U_1)$ satisfies the same $o(1)$ estimates.

Finally, set
\[
X := X_1,
\qquad
Y := Y_1 + \mathsf{S}_1^{(13)},
\qquad
U := U_1,
\]
and write
\[
(\overline{X},\overline{Y},\overline{U})
:=
(\overline{\overline{X}}_1,\overline{\overline{Y}}_1,\overline{\overline{U}}_1).
\]
Then the first-return map takes the form
\begin{align*}
\overline{X} &= Y, \\
\overline{Y} &= M_1 - M_2 X - Y^2 + \mathsf{Y}_{ij}(X,Y,U,M_1,M_2), \\
\overline{U} &= \mathsf{U}_{ij}(X,Y,U,M_1,M_2),
\end{align*}
where $\mathsf{Y}_{ij}$ and $\mathsf{U}_{ij}$ satisfy the estimates in Lemma~\ref{lem-rescaling}. This completes the proof of Lemma~\ref{lem-rescaling}.

\section{Bifurcations of Fixed Points in the standard H\'enon Map}
\label{sec-BFPHM}

\subsection{On the Lyapunov coefficient}
\label{sec-LC}

Let $g$ be a $C^4$ diffeomorphism of a closed $C^4$ manifold $M_\mathrm{ph}$, and let $Q$ be a periodic point of period $\mathrm{per}(Q)$. Assume that $Dg^{\mathrm{per}(Q)}(Q)$ has simple unit multipliers
\[
\nu = \cos\psi + \mathrm{i}\sin\psi,
\qquad
\overline\nu = \cos\psi - \mathrm{i}\sin\psi,
\qquad
\psi \in (0,\pi),
\]
and that no other multiplier lies on the unit circle.
Set
\[
\Psi_\mathrm{reg}
:=
\left\{
\psi \in (0,\pi)
\middle|\,
\psi \notin \frac{2\pi}{j}\mathbb{Z}
\text{ for every }
j \in \{1,2,3,4\}
\right\}
:=
(0,\pi) \setminus \left\{\frac{\pi}{2}, \frac{2\pi}{3}\right\}.
\]
Assume in addition that $\psi \in \Psi_\mathrm{reg}$. By the center manifold theorem, $Q$ has a two-dimensional local center manifold $W^c_\mathrm{loc}(Q)$; see \cite{Kelley1967} and \cite[Section~5A]{HPS1977}. This manifold is not unique and in general only finitely smooth; see \cite[Section~5.10.2]{Robinson1999}. Hence we fix one of class $C^4$. After restricting to a smaller neighborhood $W^c \subset W^c_\mathrm{loc}(Q)$ and choosing a $C^4$ complex coordinate $z$ on $W^c$, write
\[
g^{\mathrm{per}(Q)}|_{W^c} : z \longmapsto \widetilde{z}
=
\nu z
+
\sum_{2 \le p+q \le 3} \widetilde{z}^{(pq)} z^p \overline z^q
+
O(|z|^4),
\qquad
p, q \in \mathbb{Z}_{\ge 0}.
\]

\begin{definition}
\label{def-LC}
The quadratic change
\[
w
:=
z
+
\sum_{p+q=2}
\frac{\widetilde{z}^{(pq)}}{\nu - \nu^p \overline\nu^q}
z^p \overline z^q
\]
is well defined, because $\nu - \nu^p \overline\nu^q \neq 0$ for every $(p,q)$ with $p+q=2$. It removes the quadratic terms, so in the $w$ coordinate the map becomes
\[
\widetilde{w}
:=
\nu w
+
\sum_{p+q=3}
\widetilde{w}^{(pq)} w^p \overline w^q
+
O(|w|^4).
\]
See, for example, \cite[Sections~7, 8]{RT1971}, \cite[Chapter~III]{Iooss1979}, \cite[Sections~6, 6A]{MarsdenMcCracken1976}, or \cite[Section~2.8]{Devaney2003}.
Define
\[
\alpha(Q;z) := \widetilde{w}^{(21)},
\qquad
\mathrm{LC}(Q;z) := \Re(\overline\nu \alpha(Q;z)).
\]
We call $\mathrm{LC}(Q;z)$ the \emph{Lyapunov coefficient} of $Q$ in the coordinate $z$.
\end{definition}

\begin{remark}\label{prop-LC-sign}
Because $\psi \in \Psi_\mathrm{reg}$, the cubic change
\[
\zeta
:=
w
+
\sum_{\substack{p+q=3\\(p,q)\neq (2,1)}}
\frac{\widetilde{w}^{(pq)}}{\nu - \nu^p \overline\nu^q}
w^p \overline w^q.
\]
is well defined, because $\nu - \nu^p \overline\nu^q \neq 0$ for every $(p,q)$ with $p+q=3$ and $(p,q)\neq(2,1)$. It removes the nonresonant cubic terms, so
\[
\widetilde{\zeta}
:=
\nu \zeta + \alpha(Q;z)\zeta^2 \overline\zeta + O(|\zeta|^4).
\]
See the references above. Since $|\nu| = 1$,
\[
|\widetilde{\zeta}|^2
=
|\zeta|^2 + 2\mathrm{LC}(Q;z)|\zeta|^4 + O(|\zeta|^5),
\]
hence
\[
|\widetilde{\zeta}|
=
|\zeta| + \mathrm{LC}(Q;z)|\zeta|^3 + O(|\zeta|^4).
\]
Consequently, if $\mathrm{LC}(Q;z) > 0$, then $Q$ is weakly repelling on $W^c_\mathrm{loc}(Q)$, and if $\mathrm{LC}(Q;z) < 0$, then $Q$ is weakly attracting there.
\end{remark}

\begin{proposition}
\label{prop-LC-coordinate-free}
Assume $\psi \in \Psi_\mathrm{reg}$. If $z_1$ and $z_2$ are two local complex coordinates on $W^c$ such that
\[
g^{\mathrm{per}(Q)}|_{W^c} : z_j \longmapsto \widetilde{z}_j = \nu z_j + O(|z_j|^2),
\qquad
j \in \{1,2\},
\]
then there exists $\kappa > 0$ such that
\[
\mathrm{LC}(Q;z_2) = \kappa \, \mathrm{LC}(Q;z_1).
\]
In particular, the sign and the vanishing of $\mathrm{LC}(Q;z)$ are independent of the chosen adapted local complex coordinate.
\end{proposition}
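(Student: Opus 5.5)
The approach is to compare the two coordinates through a local change of variables $z_2 = h(z_1)$ and to track how the normalized cubic coefficient transforms. Since both coordinates put the linear part of $g^{\mathrm{per}(Q)}|_{W^c}$ in the form $z \mapsto \nu z$, the derivative $Dh(0)$ commutes with multiplication by $\nu$ as a real-linear map of $\mathbb{C} \cong \mathbb{R}^2$. Because $\psi \in (0,\pi)$, $\nu \notin \mathbb{R}$, so $Dh(0)$ is complex linear: $h(z_1) = c\, z_1 + O(|z_1|^2)$ with $c \in \mathbb{C}\setminus\{0\}$. A priori $Dh(0)$ could be conjugate linear, $z \mapsto c\overline z$; but it would then intertwine $\nu$ with $\overline\nu$, which contradicts the assumption that both normal forms have linear part $\nu$, since $\nu \neq \overline\nu$. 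I plan to split $h$ as $h = L \circ \phi$, with $L(z) = c z$ and $\phi(z) = z + \sum_{p+q\ge 2} \phi^{(pq)} z^p \overline z^q + O(|z|^4)$ a $C^4$ near-identity change, and to treat the two factors separately.

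\textbf{Step 1: the linear factor.} Under $z \mapsto c z$, the coefficient $\widetilde z^{(pq)}$ becomes $\widetilde z^{(pq)} c^{1-p}\overline c^{-q}$. Carrying this through Definition~\ref{def-LC}, the $w$-coordinate is rescaled by the same factor $c$, and $\alpha$ becomes $\alpha\, c^{-1}\overline c^{-1} = \alpha/|c|^2$. Hence $\mathrm{LC}$ is multiplied by $\kappa = |c|^{-2} > 0$.

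\textbf{Step 2: near-identity changes.} I reduce this step to the uniqueness of the resonant coefficient in the Poincaré normal form up to third order. Apply the quadratic change of Definition~\ref{def-LC} and then the cubic change of Remark~\ref{prop-LC-sign}. Each coordinate $z_j$ then yields a normal-form coordinate $\zeta_j$ in which
\[
\widetilde\zeta_j = \nu\zeta_j + \alpha(Q;z_j)\zeta_j^2\overline\zeta_j + O(|\zeta_j|^4).
\]
The composite change $\zeta_2 = k(\zeta_1)$ is again of the form $c\zeta_1 + O(|\zeta_1|^2)$. Using Step 1, I normalize $c = 1$, so $k$ is near-identity and conjugates one normal form to the other.

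Write $k = \mathrm{id} + k_2 + k_3 + O(4)$ and expand the conjugacy equation order by order. At second order the homological equation reads $\nu k_2(\zeta) - k_2(\nu\zeta) = 0$. Because $\nu \neq \nu^p\overline\nu^q$ for every $p+q=2$, this forces $k_2 = 0$. At third order, both sides have no quadratic terms, so no cross terms appear. The coefficient of $\zeta^2\overline\zeta$ in $\nu k_3(\zeta) - k_3(\nu\zeta)$ has the factor $\nu - \nu^2\overline\nu = 0$, so it vanishes. Therefore $\alpha(Q;z_2) = \alpha(Q;z_1)$. The remaining cubic monomials require $\nu \neq \nu^p\overline\nu^q$ for $(p,q) \neq (2,1)$ with $p+q=3$, and this is exactly where $\psi \in \Psi_{\mathrm{reg}}$ is used.

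\textbf{Main obstacle.} The delicate point is justifying the reduction to $\zeta$-coordinates. I need to check that $\alpha(Q;z)$, as defined through the $w$-coordinate alone, equals the $\zeta^2\overline\zeta$ coefficient after the cubic change. This holds because the cubic change of Remark~\ref{prop-LC-sign} does not alter the $(2,1)$ coefficient: it has no $(2,1)$ term and, at this order, adds only $\nu$-homological corrections. I also need that $C^4$ regularity suffices for the third-order Taylor bookkeeping, which it does, since only 3-jets enter. Combining both steps gives $\mathrm{LC}(Q;z_2) = |c|^{-2}\,\mathrm{LC}(Q;z_1)$ with $\kappa = |c|^{-2} > 0$, which proves Proposition~\ref{prop-LC-coordinate-free}.
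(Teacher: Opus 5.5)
Your proof is correct and follows essentially the paper's argument. You normalize both coordinates to the cubic normal form $\nu\zeta + \alpha\zeta^2\overline\zeta + O(|\zeta|^4)$, note that the transition's linear part commutes with $\nu$ and is therefore complex linear, and compare the $\zeta^2\overline\zeta$ coefficients through the homological equation, which gives $\kappa = |c|^{-2}$ (the paper's $|A|^2$). Splitting off the linear factor first is only a cosmetic reorganization of the paper's single comparison $\zeta_1 = A\zeta_2 + C_{21}\zeta_2^2\overline\zeta_2 + O(|\zeta_2|^4)$.
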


\begin{proof}
For $j \in \{1,2\}$, let $w_j$ be the quadratic normalization from Definition~\ref{def-LC}, and define $\zeta_j$ from $w_j$ exactly as in Remark~\ref{prop-LC-sign}. Then
\[
\widetilde{\zeta_j}
:=
\nu \zeta_j + \alpha_j \zeta_j^2 \overline\zeta_j + O(|\zeta_j|^4),
\qquad
\alpha_j = \alpha(Q;z_j).
\]
Definition~\ref{def-LC} gives
\[
\mathrm{LC}(Q;z_j) = \Re(\overline\nu \alpha_j).
\]
Write
\[
\zeta_1 = \phi(\zeta_2)
=
A \zeta_2 + B \overline\zeta_2 + O(|\zeta_2|^2).
\]
The linear terms in $\phi(\widetilde{\zeta_2}) = \widetilde{\zeta_1}$ give
\[
A \nu \zeta_2 + B \overline\nu \overline\zeta_2
=
\nu A \zeta_2 + \nu B \overline\zeta_2.
\]
Since $\nu \neq \overline\nu$, we get $B = 0$. Because $\phi$ is a local diffeomorphism, this implies $A \neq 0$. Therefore
\[
\zeta_1
:=
A \zeta_2
+
\sum_{p+q=2} B_{pq} \zeta_2^p \overline\zeta_2^q
+
\sum_{p+q=3} C_{pq} \zeta_2^p \overline\zeta_2^q
+
O(|\zeta_2|^4),
\qquad
A \neq 0.
\]
Comparing quadratic terms and nonresonant cubic terms in $\phi(\widetilde{\zeta_2}) = \widetilde{\zeta_1}$, and using again that $\nu - \nu^p \overline\nu^q \neq 0$ for every $p+q=2$ and every $(p,q)\neq(2,1)$ with $p+q=3$, we get
\[
\zeta_1 = A \zeta_2 + C_{21}\zeta_2^2 \overline\zeta_2 + O(|\zeta_2|^4).
\]
Comparing the coefficient of $\zeta_2^2 \overline\zeta_2$ in the two normal forms gives $\alpha_2 = |A|^2 \alpha_1$. Therefore
\[
\mathrm{LC}(Q;z_2) = |A|^2 \mathrm{LC}(Q;z_1).
\]
The claim follows with $\kappa := |A|^2 > 0$.
\end{proof}

\cite[Proposition~4.1]{Tomizawa2025} gives the following formulas. Note that the Lyapunov coefficient in \cite{Tomizawa2025} has the opposite sign to Definition~\ref{def-LC}.
\begin{proposition}\label{prop-LCfmla}
Assume $\psi \in \Psi_\mathrm{reg}$. Then the coefficient $\alpha$ in Definition~\ref{def-LC} is
\begin{equation}
\label{eq-alpha}
\alpha
=
\widetilde{z}^{(21)}
+
|\widetilde{z}^{(02)}|^2
\frac{4\nu - 2\overline\nu^2}{-2 + \nu^3 + \overline\nu^3}
+
|\widetilde{z}^{(11)}|^2
\frac{2 - \overline\nu}{(-1 + \overline\nu)^2}
-
\widetilde{z}^{(11)} \widetilde{z}^{(20)}
\frac{-6 + 2\nu + \overline\nu}{(-1 + \nu)^2}.
\end{equation}
Consequently,
\begin{equation}
\label{eq-nubaralpha}
\overline\nu \alpha
=
\widetilde{z}^{(21)} \overline\nu
+
|\widetilde{z}^{(02)}|^2
\frac{4 - 2\overline\nu^3}{-2 + \nu^3 + \overline\nu^3}
+
|\widetilde{z}^{(11)}|^2
\frac{2\overline\nu - \overline\nu^2}{(-1 + \overline\nu)^2}
-
\widetilde{z}^{(11)} \widetilde{z}^{(20)}
\frac{2 - 6\overline\nu + \overline\nu^2}{(-1 + \nu)^2},
\end{equation}
and thus
\[
\mathrm{LC}(Q;z)
=
\Re\left(
\widetilde{z}^{(21)} \overline\nu
+
|\widetilde{z}^{(02)}|^2
\frac{4 - 2\overline\nu^3}{-2 + \nu^3 + \overline\nu^3}
+
|\widetilde{z}^{(11)}|^2
\frac{2\overline\nu - \overline\nu^2}{(-1 + \overline\nu)^2}
-
\widetilde{z}^{(11)} \widetilde{z}^{(20)}
\frac{2 - 6\overline\nu + \overline\nu^2}{(-1 + \nu)^2}
\right).
\]
\end{proposition}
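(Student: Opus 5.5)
The plan is a direct coefficient computation. I compose the three maps in
\[
\mathrm{tr} \circ \mathrm{map} \circ \mathrm{tr}^{-1}
\]
and read off the coefficient of $w^2\overline w$. I would not use the partial composite $\mathrm{tr}\circ\mathrm{map}$, since, as stressed in Section~\ref{sec-RLC}, that is exactly where the cubic term is lost.

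\textbf{Step 1: the normalizing change and its inverse.} Write
\[
h_{pq} := \frac{\widetilde z^{(pq)}}{\nu - \nu^p\overline\nu^q}\qquad (p+q=2),
\]
so that $w = z + H(z)$ with $H(z) = \sum_{p+q=2} h_{pq}z^p\overline z^q$. Then
\[
\overline w = \overline z + \sum_{p+q=2}\overline{h_{pq}}\,\overline z^p z^q .
\]
Invert to third order by the fixed-point iteration
\[
z = w - H(w) + \bigl[DH(w)\cdot H(w)\bigr]_{\text{cubic}} + O(|w|^4).
\]
Here $DH$ acts on both $w$ and $\overline w$, and $\overline w$ is perturbed by $\overline{H}$. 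Only the $w^2\overline w$ coefficient of the cubic part is needed; call it $k_{21}$. It is an explicit quadratic expression in the $h_{pq}$ and $\overline{h_{pq}}$.

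\textbf{Step 2: compose.} Substitute $z = z(w)$ into
\[
\widetilde z = \nu z + \sum_{2\le p+q\le 3}\widetilde z^{(pq)}z^p\overline z^q,
\]
and then apply $\widetilde w = \widetilde z + H(\widetilde z)$. The coefficient of $w^2\overline w$ in $\widetilde w$ collects four kinds of contribution:
\begin{itemize}
\item[(a)] $\widetilde z^{(21)}$ itself;
\item[(b)] $\nu k_{21}$, from the linear term acting on the cubic part of $\mathrm{tr}^{-1}$;
\item[(c)] cross terms from the quadratic part of the map evaluated at $z = w - H(w)$, which produce products $\widetilde z^{(pq)}h_{p'q'}$ and $\widetilde z^{(pq)}\overline{h_{p'q'}}$;
\item[(d)] cross terms from $H(\widetilde z)$, where $\widetilde z = \nu w + (\text{quadratic}) + \cdots$ and $\overline{\widetilde z} = \overline\nu\,\overline w + \cdots$.
\end{itemize}
As a consistency check, the quadratic coefficients vanish by the choice of $h_{pq}$. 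This is the defining property in Definition~\ref{def-LC}, and I would verify it first so that the sign and index conventions are fixed before doing the cubic bookkeeping.

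\textbf{Step 3: simplify.} Every contribution is a product of two quadratic coefficients, in one of the forms $\widetilde z^{(ab)}\widetilde z^{(cd)}$, $\widetilde z^{(ab)}\overline{\widetilde z^{(cd)}}$, or $|\cdot|^2$, divided by denominators of the form $\nu-\nu^p\overline\nu^q$. Use $\overline\nu = \nu^{-1}$ and the conjugation rule $\overline{\widetilde z^{(pq)}}$ for the coefficients of $\overline{\widetilde z}$. After grouping, I expect most of the terms to cancel, leaving only three families: $|\widetilde z^{(02)}|^2$, $|\widetilde z^{(11)}|^2$ and $\widetilde z^{(11)}\widetilde z^{(20)}$.
\begin{itemize}
\item The $|\widetilde z^{(02)}|^2$ denominators combine into $(\nu-\overline\nu^2)(\overline\nu-\nu^2)$. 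Multiplying this out and using $|\nu|=1$ gives $-2+\nu^3+\overline\nu^3$ up to a unit factor.
\item The $\widetilde z^{(11)}$ terms have denominators $\nu-1$ or $\overline\nu-1$, which combine into $(-1+\nu)^2$ or $(-1+\overline\nu)^2$.
\end{itemize}
Putting the numerators over these common denominators should give \eqref{eq-alpha}.

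Multiplying \eqref{eq-alpha} by $\overline\nu$ and using $\nu\overline\nu=1$ gives \eqref{eq-nubaralpha}. For the $\widetilde z^{(11)}\widetilde z^{(20)}$ term this means rewriting $\overline\nu(-6+2\nu+\overline\nu)$ as $2-6\overline\nu+\overline\nu^2$, and similarly for the other terms. Taking real parts then gives the $\mathrm{LC}$ formula by Definition~\ref{def-LC}. Alternatively, the identity can be taken from \cite[Proposition~4.1]{Tomizawa2025} after flipping the sign convention, and the computation above serves as a check.

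\textbf{Main obstacle.} The hard step is the bookkeeping in Steps 1 and 2: getting $k_{21}$ right, and tracking the conjugated coefficients that enter through $\overline z$ and $\overline{\widetilde z}$. The omission of precisely these terms (those coming from $\mathrm{tr}^{-1}$) is the error being corrected. To control it, I would test the final formula on a simple special case, such as a map with only $\widetilde z^{(20)}\neq0$, where direct composition is easy to carry out by hand.
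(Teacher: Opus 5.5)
There is a genuine gap, and it lies in the target formula rather than in your method. Composing $\mathrm{tr}\circ\mathrm{map}\circ\mathrm{tr}^{-1}$ and reading off the $w^2\overline w$ coefficient is the right computation, but done correctly it does not produce \eqref{eq-alpha}.

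The cubic part of $\mathrm{tr}^{-1}$ is $K(w)=DH(w)\cdot H(w)$. Its $w^2\overline w$ coefficient is $k_{21}=3h_{20}h_{11}+|h_{11}|^2+2|h_{02}|^2$, so your item (b) contributes $\nu k_{21}$. However, $\widetilde z^{(pq)}-\nu h_{pq}=-\nu^p\overline\nu^q h_{pq}$, so the quadratic part of $\widetilde z$ as a function of $w$ is exactly $-H(\nu w)$. The cubic part of item (d) is therefore $DH(\nu w)\cdot(-H(\nu w))=-K(\nu w)$, which contributes $-\nu k_{21}$. Items (b) and (d) cancel, and only (a) and (c) survive:
\[
\begin{aligned}
\alpha&=\widetilde z^{(21)}-2\widetilde z^{(20)}h_{11}-\widetilde z^{(11)}h_{20}-\widetilde z^{(11)}\overline{h_{11}}-2\widetilde z^{(02)}\overline{h_{02}}\\
&=\widetilde z^{(21)}+\frac{2-\overline\nu}{1-\nu}\,\widetilde z^{(20)}\widetilde z^{(11)}+\frac{\nu}{\nu-1}\,|\widetilde z^{(11)}|^2+\frac{2\nu}{\nu^3-1}\,|\widetilde z^{(02)}|^2 .
\end{aligned}
\]
This is the classical coefficient, e.g.\ the one in \cite[Section~4]{K2023} with $\widetilde z^{(pq)}=g_{pq}/(p!\,q!)$.

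A shorter route gives the same thing. From $\phi\circ F=G\circ\phi$, with $\phi=\mathrm{id}+H$ exactly quadratic and $G$ free of quadratic terms, one gets $G\circ\phi(z)=\nu z+\nu H(z)+G_3(z)+O(|z|^4)$. So the cubic part of $\mathrm{tr}\circ\mathrm{map}$ in $z$ already equals $G_3$, and nothing is lost by ``omitting $\mathrm{tr}^{-1}$.''

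Over the denominators used in \eqref{eq-alpha}, the correct numerators are $2\nu-2\overline\nu^2$, $1-\overline\nu$ and $-3+2\nu+\overline\nu$. The stated ones are $4\nu-2\overline\nu^2$, $2-\overline\nu$ and $-6+2\nu+\overline\nu$. In fact the right-hand side of \eqref{eq-alpha} equals the true $\alpha$ minus $\nu k_{21}$. That is exactly what you get by truncating $\mathrm{tr}^{-1}$ to $w-H(w)$, i.e.\ by dropping your item (b).

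So the statement is false as written, and no amount of bookkeeping will close your final step. Falling back on \cite[Proposition~4.1]{Tomizawa2025}, which is all the paper itself does, only imports the same error.

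A decisive check is the standard H\'enon map on $L^\omega$. At $M_2=1$ its Jacobian determinant is $1$, so $P_+$ can be neither weakly attracting nor weakly repelling, and the Lyapunov coefficient must vanish. Take $\widetilde z^{(20)}=\mathrm{i}\nu^2/(4s)$, $\widetilde z^{(11)}=\mathrm{i}/(2s)$, $\widetilde z^{(02)}=\mathrm{i}\overline\nu^2/(4s)$ and $\widetilde z^{(21)}=0$. The formula above gives $\Re(\overline\nu\alpha)=\frac{3}{16s^2}-\frac{1}{8s^2}-\frac{1}{16s^2}=0$. By contrast, \eqref{eq-alpha} yields the nonzero $\mathcal L(\psi)$ of Proposition~\ref{prop-LC4Hen}.

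Your proposed test case would not have caught this. If only $\widetilde z^{(20)}\neq0$, then $h_{11}=h_{02}=0$ and $k_{21}=0$, so both formulas reduce to $\alpha=\widetilde z^{(21)}$. An informative test is $\widetilde z=\nu z+b\,z\overline z$. Direct composition gives $\alpha=\nu|b|^2/(\nu-1)$, whereas \eqref{eq-alpha} predicts $(2-\overline\nu)|b|^2/(\overline\nu-1)^2$.
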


\subsection{Lyapunov coefficient for the standard H\'enon map}
\label{sec-LCHM}

\begin{lemma}\label{lem-Henon-elliptic}
For the standard H\'enon map $F_{(M_1,M_2)}: (X, Y) \longmapsto (\overline{X}, \overline{Y})$,
\begin{equation}
\label{eq-Hen}
\overline{X} = Y,
\qquad
\overline{Y} = M_1 - M_2 X - Y^2,
\end{equation}
the fixed points are
\[
P_\pm = (X_\pm,X_\pm),
\qquad
X_\pm = \frac{-(1+M_2) \pm \sqrt{(1+M_2)^2 + 4M_1}}{2},
\]
whenever $(1+M_2)^2 + 4M_1 > 0$. The point $P_+$ has multipliers $\mathrm{e}^{\pm \mathrm{i}\psi}$ for a unique $\psi \in (0,\pi)$ if and only if $M_2 = 1$ and $-1 < M_1 < 3$. In that case,
\[
X_+ = -\cos\psi,
\qquad
M_1 = \cos^2\psi - 2\cos\psi.
\]
Moreover, $\psi \longmapsto \cos^2\psi - 2\cos\psi$ is an orientation-preserving diffeomorphism from $(0,\pi)$ onto $(-1,3)$.
\end{lemma}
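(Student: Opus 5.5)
The argument is a direct computation in three steps: locate the fixed points, compute the multipliers of $P_+$ from the trace and determinant of the Jacobian, and then check the parametrization by $\psi$.

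\textbf{Fixed points.} A fixed point satisfies $X = Y$ together with $X = M_1 - M_2 X - X^2$. Equivalently,
\[
X^2 + (1+M_2)X - M_1 = 0,
\]
whose roots are $X_\pm$, real and distinct whenever $(1+M_2)^2 + 4M_1 > 0$.

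\textbf{Multipliers of $P_+$.} The Jacobian of \eqref{eq-Hen} at $(X,Y)$ is
\[
\begin{pmatrix} 0 & 1 \\ -M_2 & -2Y \end{pmatrix},
\]
with determinant $M_2$ and trace $-2Y$. At $P_+$ the trace is $-2X_+$, so the characteristic polynomial is $\nu^2 + 2X_+\nu + M_2$. The multipliers are $\mathrm{e}^{\pm\mathrm{i}\psi}$ with $\psi \in (0,\pi)$ exactly when two conditions hold: the determinant is $1$, i.e. $M_2 = 1$, and the trace $2\cos\psi = -2X_+$ lies in $(-2,2)$, i.e. $X_+ \in (-1,1)$. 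Here $\psi = \arccos(-X_+)$ is unique because $\arccos$ is injective on $(-1,1)$.

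Now set $M_2 = 1$. Then $X_+ = -1 + \sqrt{1+M_1}$. The condition $X_+ > -1$ is equivalent to $M_1 > -1$; this also makes the discriminant positive, so the fixed points exist. The condition $X_+ < 1$ is equivalent to $\sqrt{1+M_1} < 2$, i.e. $M_1 < 3$. This proves the "if and only if". Substituting $X_+ = -\cos\psi$ into $M_1 = X_+^2 + 2X_+$ (the fixed-point equation with $M_2 = 1$) gives $M_1 = \cos^2\psi - 2\cos\psi$.

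\textbf{The map $\psi \mapsto \cos^2\psi - 2\cos\psi$.} Its derivative is
\[
-2\cos\psi\sin\psi + 2\sin\psi = 2\sin\psi\,(1-\cos\psi),
\]
which is positive on $(0,\pi)$, so the map is strictly increasing. Its limits are $-1$ as $\psi \to 0$ and $3$ as $\psi \to \pi$. Hence it is an orientation-preserving diffeomorphism of $(0,\pi)$ onto $(-1,3)$.

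No step is a real obstacle. The only point needing care is the sign bookkeeping: the trace at $P_+$ is $-2X_+$, so $2\cos\psi = -2X_+$, and one must check that it is $P_+$, not $P_-$, that satisfies $|X| < 1$. For $P_-$ we have $X_- = -1 - \sqrt{1+M_1} < -1$, so $P_-$ never has multipliers on the unit circle.
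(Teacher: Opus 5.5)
Your proposal is correct and follows essentially the same route as the paper's proof. Both use the fixed-point quadratic and the characteristic polynomial $\lambda^2 + 2X_+\lambda + M_2$ (determinant gives $M_2 = 1$, trace gives $X_+ = -\cos\psi$), then substitute into the fixed-point equation and check that the derivative $2\sin\psi(1-\cos\psi)$ is positive. You are slightly more explicit than the paper in two places: you derive $X_+ \in (-1,1)$ if and only if $-1 < M_1 < 3$ from $X_+ = -1 + \sqrt{1+M_1}$, and you check the endpoint limits $-1$ and $3$ to get surjectivity onto $(-1,3)$.
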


\begin{proof}
The fixed points satisfy $X = Y$ and $X^2 + (1+M_2)X - M_1 = 0$. At a fixed point $P = (x,x)$,
\[
DF_{(M_1,M_2)}(P)
=
\begin{pmatrix}
0 & 1 \\
-M_2 & -2x
\end{pmatrix},
\qquad
\det(\lambda I - DF_{(M_1,M_2)}(P))
=
\lambda^2 + 2x\lambda + M_2.
\]
If the multipliers of $P_+$ are $\mathrm{e}^{\pm \mathrm{i}\psi}$, then their product gives $M_2 = 1$ and their sum gives $x = -\cos\psi$. Substituting this into $x^2 + 2x - M_1 = 0$ yields $M_1 = \cos^2\psi - 2\cos\psi$. Conversely, if $M_2 = 1$ and $-1 < M_1 < 3$, then $X_+ \in (-1,1)$, so $X_+ = -\cos\psi$ for a unique $\psi \in (0,\pi)$, and the characteristic polynomial is $\lambda^2 - 2(\cos\psi)\lambda + 1$. Finally,
\[
\frac{d}{d\psi}(\cos^2\psi - 2\cos\psi)
=
2\sin\psi(1 - \cos\psi) > 0
\]
for $\psi \in (0,\pi)$.
\end{proof}

\begin{proposition}\label{prop-LC4Hen}
For every $\psi \in \Psi_\mathrm{reg}$, the fixed point $P_+$ of the standard H\'enon map has Lyapunov coefficient
\begin{equation}
\label{eq-LC4Hen}
\mathrm{LC}(P_+;z) = \mathcal{L}(\psi),
\qquad
\mathcal{L}(\psi)
:=
\frac{\cos\psi}{4(-1 + \cos\psi)^2(1 + 2\cos\psi)^2}
\end{equation}
for some local complex coordinate $z$.
\end{proposition}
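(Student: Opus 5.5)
The plan is to compute $\mathrm{LC}(P_+;z)$ directly from Proposition~\ref{prop-LCfmla}. The standard H\'enon map is polynomial, so the restriction to the center manifold is the whole map (the phase space is two-dimensional). Once the fixed point is moved to the origin and the linear part is diagonalized, the Taylor coefficients $\widetilde z^{(pq)}$ can be read off exactly. There are no cubic terms, so $\widetilde z^{(21)} = 0$.

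\emph{Step 1 (local coordinates).} By Lemma~\ref{lem-Henon-elliptic}, on $L^\omega$ we have $M_2 = 1$, $M_1 = \cos^2\psi - 2\cos\psi$, and $P_+ = (x,x)$ with $x = -\cos\psi$. Put $X = x + a$ and $Y = x + b$. Then the map becomes
\[
\overline a = b,
\qquad
\overline b = -a + 2\cos\psi\, b - b^2,
\]
since $M_1 - x - x^2 = x$. The linear part $\begin{pmatrix} 0 & 1 \\ -1 & 2\cos\psi\end{pmatrix}$ has eigenvector $(1,\nu)$ for $\nu = \mathrm{e}^{\mathrm{i}\psi}$. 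We therefore set
\[
(a,b) = z(1,\nu) + \overline z(1,\overline\nu),
\]
which is a real linear chart with complex coordinate $z$. Decomposing $(0,1) = p(1,\nu) + \overline p(1,\overline\nu)$ gives $p = 1/(\nu - \overline\nu) = 1/(2\mathrm{i}\sin\psi)$. Hence
\[
\widetilde z = \nu z - \frac{(\nu z + \overline\nu\,\overline z)^2}{\nu - \overline\nu}.
\]

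\emph{Step 2 (coefficients).} From this expression we read off
\[
\widetilde z^{(20)} = -\frac{\nu^2}{\nu - \overline\nu},
\qquad
\widetilde z^{(11)} = -\frac{2}{\nu - \overline\nu},
\qquad
\widetilde z^{(02)} = -\frac{\overline\nu^2}{\nu - \overline\nu},
\qquad
\widetilde z^{(21)} = 0.
\]
Consequently
\[
|\widetilde z^{(02)}|^2 = \frac{1}{4\sin^2\psi},
\qquad
|\widetilde z^{(11)}|^2 = \frac{1}{\sin^2\psi},
\qquad
\widetilde z^{(11)}\widetilde z^{(20)} = \frac{2\nu^2}{(\nu - \overline\nu)^2} = -\frac{\nu^2}{2\sin^2\psi}.
\]
The hypothesis $\psi \in \Psi_\mathrm{reg}$ guarantees that Proposition~\ref{prop-LCfmla} applies.

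\emph{Step 3 (simplification).} Substitute these coefficients into the real-part formula of Proposition~\ref{prop-LCfmla} and simplify each term using $\nu = \mathrm{e}^{\mathrm{i}\psi}$, $c := \cos\psi$.
\begin{itemize}
\item For the denominators, use $(-1+\nu)^2 = \nu(\nu^{1/2} - \nu^{-1/2})^2 = -4\nu\sin^2(\psi/2)$ and $-2 + \nu^3 + \overline\nu^3 = 2(\cos 3\psi - 1)$.
\item Take real parts of the numerators term by term, so that each contribution becomes a rational function of $c$ divided by $\sin^2\psi$.
\item Collect everything over a common denominator. Then $1 - \cos 3\psi = (1-c)(1+2c)^2$ should produce the factor $(1+2c)^2$, while $\sin^2\psi = (1-c)(1+c)$ together with $2\sin^2(\psi/2) = 1 - c$ produce $(1-c)^2$.
\end{itemize}
The expected outcome is $c/\bigl(4(c-1)^2(1+2c)^2\bigr)$.

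The main obstacle is purely this trigonometric bookkeeping, specifically verifying that the spurious factor $(1+c)$ from $\sin^2\psi$ cancels in the numerator. I would do the reduction by writing every term as a polynomial in $c$ over the common denominator
\[
4\sin^2\psi\,(1-c)(1+2c)^2,
\]
and checking the result numerically at a sample value such as $\psi = \pi/3$ (where $c = 1/2$), or symbolically. The identity $\mathrm{LC}(P_+;z) = \mathcal{L}(\psi)$ then holds for this particular $z$. By Proposition~\ref{prop-LC-coordinate-free}, any other adapted coordinate changes it only by a positive factor.
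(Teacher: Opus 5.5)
Your route is the paper's: translate $P_+$ to the origin, diagonalize the linear part over $\mathbb{C}$, read off the quadratic coefficients with $\widetilde z^{(21)}=0$, and substitute into Proposition~\ref{prop-LCfmla}. Your coefficients are also correct for your chart. The gap is in the final claim: for the chart you chose, the "expected outcome" is false by a factor of $4$.

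Your chart $(a,b)=z(1,\nu)+\overline z(1,\overline\nu)$ means $a=2\Re z$ and $b=2\Re(\nu z)$. The paper's chart $X=U$, $Y=cU-sV$, $z=U+\mathrm{i}V$ means $a=\Re z$ and $b=\Re(\nu z)$. Consequently your quadratic coefficients $\mathrm{i}\nu^2/(2s)$, $\mathrm{i}/s$, $\mathrm{i}\overline\nu^2/(2s)$ are exactly twice the paper's $\mathrm{i}\nu^2/(4s)$, $\mathrm{i}/(2s)$, $\mathrm{i}\overline\nu^2/(4s)$. Since $\widetilde z^{(21)}=0$, every surviving term in Proposition~\ref{prop-LCfmla} is a product of two quadratic coefficients, so each term picks up a factor $4$.

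Carrying out your Step 3 over your common denominator makes this explicit. The three real parts are
\[
-\frac{2+3c-4c^3}{4\sin^2\psi\,(1-c)(1+2c)^2},
\qquad
\frac{c-2}{2\sin^2\psi\,(1-c)},
\qquad
-\frac{3(c-2)}{4\sin^2\psi\,(1-c)}.
\]
Their numerators over $4\sin^2\psi\,(1-c)(1+2c)^2$ sum to $4c(1+c)$. The total is therefore $c/\bigl((1-c)^2(1+2c)^2\bigr)=4\mathcal{L}(\psi)$. Your proposed check at $\psi=\pi/3$ would return $1/2$, not $\mathcal{L}(\pi/3)=1/8$.

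This matters because $\mathrm{LC}$ is not coordinate-invariant. By the proof of Proposition~\ref{prop-LC-coordinate-free}, it scales by $|A|^2$ under $z_1=Az_2+\cdots$. Only its sign and vanishing are invariant, so the normalization of $z$ is exactly what fixes the value $\mathcal{L}(\psi)$ in the statement. Your closing appeal to that proposition does not rescue the identity as you wrote it.

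The fix is to use $z':=2z$, i.e.\ $(a,b)=\bigl(\Re z',\Re(\nu z')\bigr)$, which is precisely the paper's coordinate. Equivalently, apply $\alpha_2=|A|^2\alpha_1$ with $A=\tfrac12$ to get $\mathrm{LC}(P_+;z')=\tfrac14\,\mathrm{LC}(P_+;z)=\mathcal{L}(\psi)$. The sign and zero set, which are all that is used later in the paper, are unaffected either way.
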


\begin{proof}
Write $c := \cos\psi$, $s := \sin\psi > 0$, and $\nu := \mathrm{e}^{\mathrm{i}\psi}$. By Lemma~\ref{lem-Henon-elliptic}, at the elliptic parameter we have $M_2 = 1$, $M_1 = c^2 - 2c$, and $P_+ = (-c,-c)$. After the translation $X \longmapsto X + c$ and $Y \longmapsto Y + c$, the map becomes
\[
\overline{X} = Y,
\qquad
\overline{Y} = -X + 2cY - Y^2.
\]
Set
\[
X = U,
\qquad
Y = cU - sV.
\]
Then
\[
\overline{U} = cU - sV,
\qquad
\overline{V} = sU + cV + \frac{c^2}{s}U^2 - 2cUV + sV^2.
\]
In the complex coordinate $z = U + \mathrm{i}V$, this is
\[
\widetilde{z}
=
\nu z
+
\frac{\mathrm{i}\nu^2}{4s} z^2
+
\frac{\mathrm{i}}{2s} z\overline z
+
\frac{\mathrm{i}\overline\nu^2}{4s} \overline z^2.
\]
Hence
\[
\widetilde{z}^{(20)} = \frac{\mathrm{i}\nu^2}{4s},
\qquad
\widetilde{z}^{(11)} = \frac{\mathrm{i}}{2s},
\qquad
\widetilde{z}^{(02)} = \frac{\mathrm{i}\overline\nu^2}{4s},
\qquad
\widetilde{z}^{(21)} = 0.
\]
Using $\nu + \overline\nu = 2c$, $\nu\overline\nu = 1$, and $s^2 = 1 - c^2$, one can see that
\begin{align*}
\Re \left( |\widetilde{z}^{(02)}|^2 \frac{4 - 2\overline{\nu}^3}{-2 + \nu^3 + \overline{\nu}^3} \right) &= \frac{-2 - 3c + 4c^3}{16(1 + c)(-1 + c)^2(1 + 2c)^2}, \\
\Re \left( |\widetilde{z}^{(11)}|^2 \frac{2 \overline{\nu} - \overline{\nu}^2}{(-1 + \overline{\nu})^2} \right) &= \frac{-2 + c}{8(1 + c)(-1 + c)^2}, \\
\Re \left( - \widetilde{z}^{(11)} \widetilde{z}^{(20)}\frac{2 - 6 \overline{\nu} + \overline{\nu}^2}{(-1 + \nu)^2} \right) &= \frac{-3(-2 + c)}{16(1 + c)(-1 + c)^2}.
\end{align*}
Thus, Proposition~\ref{prop-LCfmla} gives \eqref{eq-LC4Hen}.
\end{proof}
\subsection{The bifurcation diagram for the standard H\'enon map}

The following standard facts are known for the standard H\'enon map~\eqref{eq-Hen}; see, for example, \cite{HitzlZele1985}. See also Figure~\ref{fig-bifdiam}.
\begin{itemize}
\item The curve $L^+ := \left\{M_1 = -\frac{(1+M_2)^2}{4}\right\}$ corresponds to the moment of a saddle-node bifurcation.
\item Whenever $(1+M_2)^2 + 4M_1 > 0$, the map has exactly two fixed points $P_\pm = (X_\pm, X_\pm)$, as in Lemma~\ref{lem-Henon-elliptic} with $X_- < X_+$.
\item The curve $L^- := \left\{M_1 = \frac{3(1+M_2)^2}{4}\right\}$ corresponds to the moment of a period-doubling bifurcation at $P_+$.
\item The curve $L^\omega := \{M_2 = 1, -1 < M_1 < 3\}$ corresponds to the moment of a Neimark-Sacker (Andronov-Hopf) bifurcation at $P_+$.
\item The points $B^{++} := (-1,1)$, $B^{--} := (3,1)$, and $B^{+-} := (0,-1)$ correspond to the multiplier pairs $(1,1)$, $(-1,-1)$, and $(1,-1)$, respectively, at $P_+$.
\end{itemize}
Because $M_1 = \cos^2\psi - 2\cos\psi$ in Lemma~\ref{lem-Henon-elliptic}, the points $B^{++}$, $C_1^\omega = (0,1)$, $C_2^\omega = \left(\frac{5}{4},1\right)$, and $B^{--}$ correspond to $\psi = 0$, $\psi = \pi/2$, $\psi = 2\pi/3$, and $\psi = \pi$, respectively. Since the function $\mathcal{L}(\psi)$ in \eqref{eq-LC4Hen} is shown in Figure~\ref{fig-bifdiam}-(a), on the curve $L^\omega$, the fixed point $P_+$ is weakly repelling for $-1 < M_1 < 0$ and weakly attracting for $0 < M_1 < 3$ with $M_1 \neq 5/4$.

The following proposition follows from Proposition~\ref{prop-LC4Hen} and the Neimark-Sacker (Andronov-Hopf) theorem, since $\det DF_{(M_1,M_2)} \equiv M_2$; see \cite[theorem~7.2]{RT1971}, \cite[theorem~6.2]{MarsdenMcCracken1976}, or \cite[Section~4]{K2023}.

\begin{proposition}\label{prop-Henon-NS}
Let $M_1 = L^*(M_2)$ be a $C^\infty$ curve defined for $|M_2 - 1| < \varepsilon_*$ whose graph is transverse to $L^\omega$ at $(M_1^*,1) \in L^\omega \setminus \{C_1^\omega,C_2^\omega\}$. Then, the hyperbolic continuation of $P_+$ undergoes a Neimark-Sacker (Andronov-Hopf) bifurcation at $M_2 = 1$:
\begin{itemize}
\item If $-1 < M_1^* < 0$, then for every $M_2 < 1$ sufficiently close to $1$, the map $F_{(L^*(M_2), M_2)}$ has a repelling normally hyperbolic invariant circle;
\item if $0 < M_1^* < 3$ and $M_1^* \neq 5/4$, then for every $M_2 > 1$ sufficiently close to $1$, it has an attracting normally hyperbolic invariant circle.
\end{itemize}
\end{proposition}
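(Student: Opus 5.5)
The plan is to reduce Proposition~\ref{prop-Henon-NS} to the standard Neimark--Sacker theorem for planar one-parameter families. To do this, I would verify its three hypotheses along the curve $M_2 \mapsto (L^*(M_2),M_2)$:
\begin{itemize}
\item non-resonance at the critical parameter;
\item transversal crossing of the unit circle by the multipliers;
\item nonvanishing of the Lyapunov coefficient.
\end{itemize}
Its conclusion then gives the invariant circle, and its side of creation is read off from the sign of the coefficient.

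\emph{Step 1 (the family and non-resonance).} Near $(M_1^*,1)$, Lemma~\ref{lem-Henon-elliptic} shows that $(1+M_2)^2+4M_1>0$, so $P_+$ has a $C^\infty$ continuation $P_+(M_2)$ along the curve. Its multipliers are the roots of $\lambda^2+2X_+\lambda+M_2=0$. At $M_2=1$ they equal $\mathrm{e}^{\pm\mathrm{i}\psi^*}$, where $\psi^*\in(0,\pi)$ is determined by $M_1^*=\cos^2\psi^*-2\cos\psi^*$. Excluding $C_1^\omega$ and $C_2^\omega$ means exactly $\psi^*\neq\pi/2,\,2\pi/3$, i.e.\ $\psi^*\in\Psi_\mathrm{reg}$. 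This gives the strong non-resonance $\nu^k\neq1$ for $k=1,2,3,4$ that the theorem requires.

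\emph{Step 2 (transversality).} For $M_2$ near $1$ the multipliers remain non-real. Hence their modulus is $\sqrt{\det DF}=\sqrt{M_2}$, since $\det DF_{(M_1,M_2)}\equiv M_2$. It follows that $\frac{d}{dM_2}|\nu(M_2)|\big|_{M_2=1}=\tfrac12\neq0$. The crossing is therefore transverse, with $|\nu|<1$ for $M_2<1$ and $|\nu|>1$ for $M_2>1$. This step is independent of $L^*$; the transversality of the graph to $L^\omega$ is only used to ensure that the curve actually crosses $M_2=1$ at the point $(M_1^*,1)$.

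\emph{Step 3 (the Lyapunov coefficient and the side of the circle).} By Proposition~\ref{prop-LC4Hen}, $\mathrm{LC}(P_+;z)=\mathcal{L}(\psi^*)=\cos\psi^*/\bigl(4(1-\cos\psi^*)^2(1+2\cos\psi^*)^2\bigr)$. This is nonzero on $\Psi_\mathrm{reg}$, and by Proposition~\ref{prop-LC-coordinate-free} its sign does not depend on the coordinate. Using the orientation-preserving correspondence of Lemma~\ref{lem-Henon-elliptic}:
\begin{itemize}
\item $M_1^*\in(-1,0)$ corresponds to $\psi^*\in(0,\pi/2)$, so $\mathcal{L}>0$ and the point is weakly repelling;
\item $M_1^*\in(0,3)\setminus\{5/4\}$ corresponds to $\psi^*\in(\pi/2,\pi)\setminus\{2\pi/3\}$, so $\mathcal{L}<0$ and the point is weakly attracting.
\end{itemize}
The theorem (\cite[Theorem~7.2]{RT1971}, \cite[Theorem~6.2]{MarsdenMcCracken1976}) then gives the circle. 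In the repelling case it appears on the side where $P_+$ is attracting, namely $M_2<1$, and the circle is repelling. In the attracting case it appears where $P_+$ is repelling, namely $M_2>1$, and the circle is attracting. Normal hyperbolicity follows from the polar normal form $r\mapsto r(1+\tfrac12(M_2-1)+\mathrm{LC}\,r^2+\dots)$: the radial derivative at the circle $r^2\approx -(M_2-1)/(2\mathrm{LC})$ is $1-(M_2-1)+o(M_2-1)$ in the attracting case and $1+(1-M_2)+o(1-M_2)$ in the repelling case. The map restricted to the circle is near a rotation, so the normal rate dominates the tangential one.

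The main obstacle is matching conventions. One must check that the sign convention of $\mathrm{LC}$ in Definition~\ref{def-LC} agrees with the one used in the cited Neimark--Sacker theorems, and that the parameter passed to those theorems is $M_2-1$ with positive crossing speed. I would handle this by working directly in the normal form of Remark~\ref{prop-LC-sign} for the parametrized family. The parameter-dependent normalization is smooth because $F$ is polynomial, and the radial map above can be derived explicitly from it. This makes the side and the stability of the circle transparent, independently of the conventions in the literature.
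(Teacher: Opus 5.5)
Your outline matches the paper's own justification: non-resonance from excluding $C_1^\omega,C_2^\omega$, transverse crossing from $\det DF_{(M_1,M_2)}\equiv M_2$, and nondegeneracy from Proposition~\ref{prop-LC4Hen}. Steps~1--2 are fine. The gap is in Step~3, it is inherited from Proposition~\ref{prop-LC4Hen}, and it cannot be closed.

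\textbf{Why $\mathrm{LC}(P_+;z)$ must vanish.} At $M_2=1$ the Jacobian of $F_{(M_1^*,1)}$ is $\equiv 1$, so the map is area-preserving. An area-preserving map has no weakly attracting or weakly repelling elliptic point in the sense of Remark~\ref{prop-LC-sign}. If $\mathrm{LC}<0$, a small disc $\{|\zeta|\le\rho\}$ is mapped into a strictly smaller disc, yet $F$ preserves an area form with positive continuous density in $\zeta$. If $\mathrm{LC}>0$, the same argument applies to $F^{-1}$. Hence $\mathrm{LC}(P_+;z)=0$ for every $\psi\in\Psi_\mathrm{reg}$, and your claim that $\mathcal{L}(\psi)$ is nonzero on $\Psi_\mathrm{reg}$ is false.

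\textbf{Where the error enters.} It sits in \eqref{eq-alpha}. Let $h_{pq}:=\widetilde z^{(pq)}/(\nu-\nu^p\overline\nu^q)$ be the coefficients of the quadratic change $w=z+h(z)$ in Definition~\ref{def-LC}. A correct computation of $\mathrm{tr}\circ\mathrm{map}\circ\mathrm{tr}^{-1}$ must use $z=w-h(w)+Dh(w)h(w)+O(|w|^4)$. This gives the classical formula of \cite{K2023},
\[
\alpha=\widetilde z^{(21)}+\frac{\overline\nu-2}{\nu-1}\,\widetilde z^{(20)}\widetilde z^{(11)}+\frac{|\widetilde z^{(11)}|^2}{1-\overline\nu}+\frac{2|\widetilde z^{(02)}|^2}{\nu^2-\overline\nu}.
\]
Formula \eqref{eq-alpha} equals this minus $\nu\bigl(3h_{20}h_{11}+|h_{11}|^2+2|h_{02}|^2\bigr)$, which is exactly the $w^2\overline w$-coefficient of $\nu\,Dh(w)h(w)$. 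With the H\'enon coefficients from the proof of Proposition~\ref{prop-LC4Hen}, the classical formula yields
\[
\overline\nu\alpha=\frac{(\nu+1)(2\nu^2+\nu+2)}{8s^2(\nu^3-1)}=\frac{\cos(\psi/2)\,(1+4\cos\psi)}{8\mathrm{i}\,s^2\sin(3\psi/2)},
\]
which is purely imaginary. As an independent check, conjugate $u\mapsto\nu u$ by $u\mapsto u+ku\overline u$. This gives $\widetilde z^{(20)}=\widetilde z^{(02)}=0$, $\widetilde z^{(11)}=k(1-\nu)$, $\widetilde z^{(21)}=(\nu-1)|k|^2$, for a map conjugate to a rotation. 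Yet \eqref{eq-alpha} assigns it $\mathrm{LC}=-|k|^2\neq0$.

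\textbf{The statement itself is false.} For $M_2\neq1$ near $1$, $F_{(M_1,M_2)}$ is a diffeomorphism of $\mathbb{R}^2$ with constant Jacobian $M_2$. If $C$ were an invariant circle, $F$ would map the bounded component $D$ of $\mathbb{R}^2\setminus C$ onto itself, forcing $\mathrm{area}(D)=M_2\,\mathrm{area}(D)$. So the standard H\'enon map has no invariant circles at all off $M_2=1$, attracting or repelling, along any curve through $L^\omega$. Crossing $L^\omega$ is a degenerate, conservative Neimark--Sacker point. The explicit radial map you planned to derive would show a vanishing $r^3$ coefficient there.

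Genuine attracting or repelling circles appear only if you keep the Jacobian-breaking higher-order terms of the rescaled return map. That means working with a generalized H\'enon family as in \cite{GG2000,GSS2002,GKM2005}, whose extra coefficients fix the sign of the first Lyapunov coefficient.
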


\section{The proof of the main theorem}
\label{sec-PMT}

\subsection{Attracting periodic circles arbitrarily near cycles}

\begin{definition}
We say that $(f,\Gamma^*)$ is \emph{orientable} if it satisfies the following condition \textup{(P4)}:
\begin{itemize}
\item \textup{(P4)} For the signature $\varsigma_{ij}$ defined in Lemma~\ref{lem-rescaling}, there exists $(i, j) \in \mathcal{I}$ such that $\varsigma_{ij} = 1$.
\end{itemize}
\end{definition}

\begin{remark}\leavevmode
\label{rem-orientable}
\begin{enumerate}
\item[(1)] The property \textup{(P4)} implies infinitely many $(i, j) \in \mathcal{I}$ such that $\varsigma_{ij} = 1$. Indeed, $\varsigma_{ij}$ depends only on $i,j \bmod 2$, and $\mathcal{I}$ contains infinitely many pairs with any prescribed such remainders.
\item[(2)] The property \textup{(P4)} does not depend on the choice of approximately linearized coordinate systems or base points. This is clear from $-B_{21}C_{21}J_{12} = \det \mathbf{J}_{12}\det \mathbf{J}_{21}$.
\end{enumerate}
\end{remark}

\begin{theorem}
\label{thm-circ}
Let $r \in \mathbb{Z}_{\ge 5} \sqcup \{\infty\}$ and $\dim(M_\mathrm{ph}) \ge 3$. Suppose that $f \in \mathrm{Diff}^r(M_\mathrm{ph})$ has a centrally dissipative-expanding transversal and non-transversal heteroclinic cycle $\Gamma^*$ of type two bi-saddles with one-dimensional unstable directions, and that $(f,\Gamma^*)$ satisfies \textup{(P1)}--\textup{(P4)}. Then, for every sequence $\{s_k\}_{k=1}^\infty \subset \mathbb{Z}_{\ge 5}$ with
\[
s_k \xrightarrow{k\to\infty} \infty \text{ if } r = \infty,
\qquad
s_k = r \text{ if } r < \infty,
\]
there exist sequences
\[
\{\varepsilon_k\}_{k=1}^\infty \subset R_\mathrm{prm}^*,
\qquad
\{\tau_k\}_{k=1}^\infty \subset \mathbb{Z}_{>0},
\]
such that
\[
\varepsilon_k \xrightarrow{k\to\infty} \varepsilon^*,
\qquad
\tau_k \xrightarrow{k\to\infty} \infty,
\]
and $f_{\varepsilon_k}$ has an attracting normally hyperbolic periodic $C^{s_k}$ circle of period $\tau_k$ for every $k$.
\end{theorem}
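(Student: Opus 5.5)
The plan is to combine the rescaling lemma with the Neimark--Sacker analysis of the standard H\'enon map. First I fix a proper unfolding family $\{f_\varepsilon\}$ of $(f,\Gamma^*)$. By Remark~\ref{rem-prm} such a family exists under \textup{(P1)}--\textup{(P3)}, and it can be realized as a $C^r$ family with $f_{\varepsilon^*}=f$. By \textup{(P4)} and Remark~\ref{rem-orientable}~(1), there is an infinite sequence $(i_k,j_k)\in\mathcal I$ with $\min\{i_k,j_k\}\to\infty$ and $\varsigma_{i_kj_k}=1$. For these pairs, Lemma~\ref{lem-rescaling} gives parameter domains $\Delta_{i_kj_k}$ shrinking to $\varepsilon^*$. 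On each one, $\varepsilon\mapsto(M_1,M_2)$ is a $C^{r-2}$ diffeomorphism onto $(-2,4)\times(1/2,3/2)$, so its image contains a neighborhood of the whole segment $L^\omega$. In rescaled coordinates, $T_{i_kj_k}$ is a $C^r$-small perturbation, of size $o(1)$ on bounded sets, of the map $(X,Y,U)\mapsto(Y,\,M_1-M_2X-Y^2,\,0)$.

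Next I fix a point $(M_1^*,1)\in L^\omega$ with $0<M_1^*<3$ and $M_1^*\neq 5/4$, for instance $M_1^*=1$. By Proposition~\ref{prop-LC4Hen}, $\mathcal L(\psi^*)<0$ there, and the fixed point $P_+$ of $F$ at this parameter has multipliers $e^{\pm i\psi^*}$. In the rescaled coordinates, the limit map has $P_+\times\{0\}$ as a fixed point with additional multipliers $0$ in the $U$-direction. For $k$ large, the perturbed map has a continuation $P_+^{(k)}(M_1,M_2)$ of this fixed point, by the implicit function theorem applied to the $C^1$-small perturbation. Its determinant restricted to the two weak multipliers is $M_2+o(1)$ in $C^1$ in the parameters; this uses the $C^{s-2}$ parameter estimates with $s\ge 3$. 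So along the line $M_1=M_1^*$, the modulus of the complex pair crosses $1$ transversally at some $M_2=1+o(1)$. Pulling back by the parameter diffeomorphism gives $\varepsilon$ values in $\Delta_{i_kj_k}$. At the crossing, the map has a $2$-dimensional center manifold of class $C^{s_k-1}$ or better, since the other multipliers are $o(1)$. This is where the choice $s_k\le r$, with $s_k\to\infty$ when $r=\infty$, enters: the rescaling estimates are $o(1)$ in $C^{s_k}$ on bounded sets. The coefficients $\widetilde z^{(pq)}$ on the center manifold converge to those of the H\'enon map as $k\to\infty$. This needs $C^3$-closeness of the center manifold reduction, which follows from $C^4$ or $C^5$-closeness of the maps; hence $r\ge5$. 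Therefore, by Proposition~\ref{prop-LCfmla}, the Lyapunov coefficient converges to $\mathcal L(\psi^*)<0$, and $\psi$ stays in $\Psi_{\mathrm{reg}}$. Proposition~\ref{prop-LC-coordinate-free} ensures the sign is coordinate-independent.

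The nondegenerate Neimark--Sacker theorem, applied in the form of Proposition~\ref{prop-Henon-NS} but to the perturbed family, then gives, for parameters on the attracting side close to the crossing, an invariant circle of $T_{i_kj_k}$. This circle is attracting within the center manifold and normally hyperbolic with respect to $T_{i_kj_k}$, and it attracts transversally because the $U$- and strong directions are contracting, with multipliers of size $o(1)$. Choosing such a parameter $\varepsilon_k$ yields an invariant $C^{s_k}$ circle $C_k$ of $T_{i_kj_k}=f_{\varepsilon_k}^{\tau_k}$ lying in $\widetilde\Lambda_1^{\mathrm{in}}$. Here $\tau_k=i_k\,\mathrm{per}(O_1^*)+N_{1\to2}+j_k\,\mathrm{per}(O_2^*)+N_{2\to1}$. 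Its intermediate images under $f_{\varepsilon_k}$ pass through disjoint regions near $O_1,O_2$ and the connections, so $C_k$ is a periodic circle of period exactly $\tau_k$, after possibly dividing by a common factor. I would check disjointness of the iterates via the time spent in $U_1^*$ versus $U_2^*$; in any case $\tau_k\to\infty$. Also $\varepsilon_k\to\varepsilon^*$ because $\Delta_{i_kj_k}$ shrinks.

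The main obstacle is making the Neimark--Sacker theorem uniform in $k$. One must show that the perturbation terms $\mathsf Y_{ij},\mathsf U_{ij}$ being $o(1)$ in $C^{s}$, jointly in the parameters in $C^{s-2}$, implies convergence of the center-manifold cubic normal form coefficients, so that the Lyapunov coefficient stays bounded away from zero. One must also show that the transversality of the multiplier crossing persists, and that the resulting circle's attracting neighborhood and normal hyperbolicity survive in the full $\dim M_{\mathrm{ph}}$-dimensional phase space. I would handle this by treating $(M_1,M_2,\text{perturbation})$ as a continuous family of $C^{5}$ maps converging to $F\times 0$ and applying the Neimark--Sacker theorem with parameters, in the form of \cite[Theorem~7.2]{RT1971}, whose hypotheses are open in this topology.
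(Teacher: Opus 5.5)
Your proposal is correct and follows essentially the same route as the paper. You choose pairs $(i_k,j_k)$ with $\varsigma_{i_kj_k}=1$ and a point of $L^\omega$ with $0<M_1<3$, $M_1\neq 5/4$. You get a negative Lyapunov coefficient from the closeness of the center-manifold reduction to the H\'enon map together with Propositions~\ref{prop-LC4Hen} and~\ref{prop-LC-coordinate-free}, then apply the Neimark--Sacker theorem and take $\tau_k=i_k\mathrm{per}(O_1^*)+N_{1\to2}+j_k\mathrm{per}(O_2^*)+N_{2\to1}$. Your explicit check that the weak-multiplier product is $M_2+o(1)$ in $C^1$ is, if anything, more careful than the paper's transverse curve to $\widehat L_k^\omega$. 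Your hedge about dividing $\tau_k$ by a common factor is unnecessary: $T_{i_kj_k}$ is the first return along a fixed itinerary, so the intermediate iterates of $C_k$ avoid the small neighborhood of $M_{1,\mathrm{in}}$ containing $C_k$, and the period is exactly $\tau_k$.
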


\begin{proof}
Fix $\{s_k\}_{k=1}^\infty$. By Remark~\ref{rem-orientable}, we can choose $\{(i_k,j_k)\}_{k=1}^\infty \subset \mathcal{I}$ such that
\[
\varsigma_{i_kj_k} = 1,
\qquad
\min\{i_k,j_k\} \xrightarrow{k\to\infty} \infty.
\]

Choose $M_1^\dagger \in (0,3) \setminus \{5/4\}$. For all large $k$, let $\widehat{L}_k^\omega \subset \Delta_{i_kj_k}$ be the bifurcation curve corresponding to $L^\omega$, let $\varepsilon_k^\dagger$ be the point corresponding to $(M_1^\dagger,1)$ under the rescaling of Lemma~\ref{lem-rescaling}, and let $Q_k^\dagger$ be a fixed point of $T_{i_kj_k}(\varepsilon_k^\dagger;\mathfrak{F}_\mathrm{obj})$ whose multipliers are $\nu_k,\overline{\nu_k}$ and whose other multipliers have modulus $< 1$.

By the center manifold theorem, the local dynamics of $T_{i_kj_k}(\varepsilon_k^\dagger;\mathfrak{F}_\mathrm{obj})$ at $Q_k^\dagger$ reduces to a $C^{s_k}$ surface map. By Lemma~\ref{lem-rescaling} and the estimate at the end of its proof, this center map is $C^4$-close to the standard H\'enon map at $(M_1^\dagger,1)$ for all large $k$. Hence, by Definition~\ref{def-LC}, Proposition~\ref{prop-LC4Hen}, and Proposition~\ref{prop-LC-coordinate-free}, its Lyapunov coefficient is negative for all large $k$.

Let $\widehat{\Gamma}_k \subset \Delta_{i_kj_k}$ be a $C^\infty$ curve through $\varepsilon_k^\dagger$ transverse to $\widehat{L}_k^\omega$. Apply the Neimark-Sacker (Andronov-Hopf) theorem to the $C^r$ one-parameter family $T_{i_kj_k}|_{\widehat{\Gamma}_k}$ at $\varepsilon_k^\dagger$; see \cite[theorem~7.2]{RT1971}, \cite[theorem~6.2]{MarsdenMcCracken1976}, or \cite[Section~4]{K2023}. Note that this theorem requires $r \ge 5$. Then there exists a parameter
\[
\varepsilon_k \in \widehat{\Gamma}_k
\]
arbitrarily close to $\varepsilon_k^\dagger$ such that $T_{i_kj_k}(\varepsilon_k;\mathfrak{F}_\mathrm{obj})$ has an attracting invariant $C^{s_k}$ circle $C_k$. Since the remaining multipliers at $Q_k^\dagger$ are strictly inside the unit disk, $C_k$ is normally hyperbolic. By Lemma~\ref{lem-rescaling},
\[
\sup_{\varepsilon \in \Delta_{i_kj_k}} \|\varepsilon - \varepsilon^*\| \xrightarrow{k\to\infty} 0,
\]
so $\varepsilon_k \xrightarrow{k\to\infty} \varepsilon^*$.

Set
\[
\tau_k := i_k\mathrm{per}(O_1^*) + N_{1 \to 2} + j_k\mathrm{per}(O_2^*) + N_{2 \to 1}.
\]
Since
\[
T_{i_kj_k}(\varepsilon_k;\mathfrak{F}_\mathrm{obj})
=
f_{\varepsilon_k}^{\tau_k}\big|_{\widetilde{\Lambda}_1^\mathrm{in}}
\]
is the first-return map along the fixed itinerary, we have
\[
f_{\varepsilon_k}^m(C_k) \cap C_k = \emptyset
\qquad
(1 \le m \le \tau_k - 1).
\]
Hence $C_k$ is an attracting normally hyperbolic periodic $C^{s_k}$ circle for $f_{\varepsilon_k}$, with period $\tau_k$. Since $\min\{i_k,j_k\} \xrightarrow{k\to\infty} \infty$, we have $\tau_k \xrightarrow{k\to\infty} \infty$.
\end{proof}

The invariance of normally hyperbolic invariant manifolds implies the following consequence; see \cite{HPS1977}.

\begin{corollary}
\label{cor-circ}
Under the assumptions of Theorem~\ref{thm-circ}, fix a sequence $\{s_k\}_{k=1}^\infty \subset \mathbb{Z}_{\ge 5}$ such that
\[
s_k \xrightarrow{k\to\infty} \infty \text{ if } r = \infty,
\qquad
s_k = r \text{ if } r < \infty.
\]
Then there exist sequences
\[
\{f_k\}_{k=1}^\infty \subset \mathrm{Diff}^r(M_\mathrm{ph}),
\qquad
\{\mathcal{U}_k\}_{k=1}^\infty,
\qquad
\{\tau_k\}_{k=1}^\infty \subset \mathbb{Z}_{>0},
\]
such that
\[
f_k \xrightarrow{k\to\infty} f \quad \text{in } C^r,
\qquad
\tau_k \xrightarrow{k\to\infty} \infty,
\]
each $\mathcal{U}_k$ is a $C^r$ neighborhood of $f_k$, and every $g \in \mathcal{U}_k$ has an attracting normally hyperbolic periodic $C^{s_k}$ circle of period $\tau_k$.
\end{corollary}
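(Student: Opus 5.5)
The plan is to obtain Corollary~\ref{cor-circ} directly from Theorem~\ref{thm-circ} together with the persistence theory of normally hyperbolic invariant manifolds \cite{HPS1977}. Fix $\{s_k\}$ as in the statement. We may assume $\{f_\varepsilon\}_{\varepsilon \in R_\mathrm{prm}^*}$ is a proper unfolding family with $f_{\varepsilon^*}=f$; such a family exists by Remark~\ref{rem-prm}. Theorem~\ref{thm-circ} then yields $\varepsilon_k \to \varepsilon^*$, periods $\tau_k \to \infty$, and attracting normally hyperbolic periodic $C^{s_k}$ circles $C_k$ of period $\tau_k$ for $f_{\varepsilon_k}$. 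Set $f_k := f_{\varepsilon_k}$. Since the map $(P,\varepsilon)\mapsto f_\varepsilon(P)$ is $C^r$ on $M_\mathrm{ph}\times R_\mathrm{prm}^*$ and $M_\mathrm{ph}$ is compact, $\varepsilon\mapsto f_\varepsilon$ is continuous into $\mathrm{Diff}^r(M_\mathrm{ph})$ for finite $r$. For $r=\infty$, continuity holds in every $C^s$ norm and hence in the $C^\infty$ topology. It follows that $f_k\to f$ in $C^r$.

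Next we construct $\mathcal{U}_k$. Fix $k$. The circle $C_k$ is a compact invariant manifold of $f_k^{\tau_k}$ that is normally hyperbolic and, since it is attracting, has trivial unstable bundle. By the $C^{s_k}$ persistence theorem \cite[Theorem~4.1]{HPS1977}, there is a $C^1$ neighborhood $\mathcal{V}_k$ of $f_k^{\tau_k}$ such that every $h\in\mathcal{V}_k$ has a unique invariant manifold $C_h$ near $C_k$. This $C_h$ is $C^{s_k}$-diffeomorphic to $C_k$, normally hyperbolic, and again attracting, since the normal contraction persists. The map $g\mapsto g^{\tau_k}$ is continuous from $\mathrm{Diff}^r$ with the $C^r$ topology to $\mathrm{Diff}^1$ with the $C^1$ topology. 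We therefore take $\mathcal{U}_k$ to be a $C^r$ neighborhood of $f_k$ with $g^{\tau_k}\in\mathcal{V}_k$ for all $g\in\mathcal{U}_k$.

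Two points need care here. The first is regularity. To get a $C^{s_k}$ circle we need $s_k$-normal hyperbolicity, meaning the normal contraction dominates the $s_k$-th power of the tangential rates. The circle from Theorem~\ref{thm-circ} arises from a Neimark-Sacker bifurcation at a point whose center multipliers have modulus $1$. Its tangential dynamics are therefore nearly isometric, while the normal contraction is bounded away from $1$ by the Lyapunov coefficient scaling. So $r$-normal hyperbolicity holds provided $\varepsilon_k$ is taken close enough to the bifurcation curve, and we choose $\varepsilon_k$ in Theorem~\ref{thm-circ} accordingly. I expect this domination check to be the main technical point. The remedy is to note that Theorem~\ref{thm-circ} lets $\varepsilon_k$ be arbitrarily close to $\varepsilon_k^\dagger$.

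The second point is the period. $C_k$ lies near a $\tau_k$-periodic orbit, so the sets $f_k^m(C_k)$ for $0\le m<\tau_k$ are pairwise disjoint compact sets at positive distance from one another. By $C^0$-closeness of $C_h$ to $C_k$ and of $g^m$ to $f_k^m$ for $m<\tau_k$, after shrinking $\mathcal{U}_k$ the sets $g^m(C_{g^{\tau_k}})$ stay in disjoint neighborhoods of the $f_k^m(C_k)$. Hence $g^m(C)\cap C=\emptyset$ for $1\le m\le \tau_k-1$, and $C$ is a periodic circle of period exactly $\tau_k$. Relative normal hyperbolicity for $g^{\tau_k}$ is the notion used in the definition of a normally hyperbolic periodic circle, so the conclusion holds for every $g\in\mathcal{U}_k$.
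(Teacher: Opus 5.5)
Your route is the paper's: it takes $f_k := f_{\varepsilon_k}$ from Theorem~\ref{thm-circ} and gets $\mathcal{U}_k$ from persistence of normally hyperbolic invariant manifolds \cite{HPS1977}, all in one line. Your treatment of $f_k \to f$, of $\mathcal{U}_k$ via continuity of $g \mapsto g^{\tau_k}$, and of the exact period $\tau_k$ correctly supplies details the paper leaves implicit. The one step that is wrong as you argue it is the regularity check, which the paper does not carry out at all. Two small points there: it must be $s_k$-normal hyperbolicity, not $r$-normal, since $r$ may be $\infty$. Also, the freedom to take $\varepsilon_k$ close to $\varepsilon_k^\dagger$ comes from the proof of Theorem~\ref{thm-circ}, not from its statement.

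Your claim that the normal contraction is bounded away from $1$ fails for the weak normal direction. Only the strong-stable directions, the $U$-directions whose multipliers are $o(1)$ by Lemma~\ref{lem-rescaling}, contract at a rate bounded away from $1$.

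Inside the center manifold, let $1+\beta$, with $\beta>0$ small, be the modulus of the complex multipliers of the fixed point at $\varepsilon_k$. For the truncated normal form $\zeta \mapsto \nu\zeta + \alpha\zeta^2\overline{\zeta}$, the invariant circle is $\rho^2 \approx -\beta/\mathrm{LC}$. The radial factor on it is $1 - 2\beta + O(\beta^2)$, which tends to $1$ as $\varepsilon_k$ approaches $\widehat{L}^\omega_k$. So moving toward the curve \emph{weakens} the normal contraction.

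Hence ``tangentially nearly isometric'' is not enough: you need the tangential defect to be $o(\beta)$. This does hold. On this circle the truncated map is a rigid rotation with tangential factor $|\nu + \alpha\rho^2| = 1$, and the $O(|\zeta|^4)$ remainder changes both factors by only $O(\beta^{3/2})$. Therefore $(1 - 2\beta + O(\beta^{3/2}))(1 + O(\beta^{3/2}))^{s_k} < 1$ for $0 < \beta < \beta(s_k)$.

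This comparison of two rates that both tend to $1$ is why the admissible window shrinks as $s_k$ grows, and why only finite smoothness is claimed. With this comparison in place of your heuristic, the rest of your argument goes through.
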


By Proposition~\ref{prop-generic}, \textup{(P1)}--\textup{(P3)} are achieved by arbitrarily small $C^r$ perturbations. For \textup{(P4)}, it suffices to replace the global maps as in \cite[Section~4]{GSS2002}. Thus Corollary~\ref{cor-circ} applies after an arbitrarily small perturbation.

\begin{corollary}
\label{cor-main-from-circ}
Let $r \in \mathbb{Z}_{\ge 5} \sqcup \{\infty\}$, and suppose that $f \in \mathrm{Diff}^r(M_\mathrm{ph})$ has a centrally dissipative-expanding transversal and non-transversal heteroclinic cycle of type two bi-saddles with one-dimensional unstable directions. Then, for every sequence $\{s_k\}_{k=1}^\infty \subset \mathbb{Z}_{\ge 5}$ with
\[
s_k \xrightarrow{k\to\infty} \infty \text{ if } r = \infty,
\qquad
s_k = r \text{ if } r < \infty,
\]
there exist sequences
\[
\{f_k\}_{k=1}^\infty \subset \mathrm{Diff}^r(M_\mathrm{ph}),
\qquad
\{\mathcal{U}_k\}_{k=1}^\infty,
\qquad
\{\tau_k\}_{k=1}^\infty \subset \mathbb{Z}_{>0},
\]
such that
\[
f_k \xrightarrow{k\to\infty} f \quad \text{in } C^r,
\qquad
\tau_k \xrightarrow{k\to\infty} \infty,
\]
each $\mathcal{U}_k$ is a $C^r$ neighborhood of $f_k$, and every $g \in \mathcal{U}_k$ has an attracting normally hyperbolic periodic $C^{s_k}$ circle of period $\tau_k$.
\end{corollary}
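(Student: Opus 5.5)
The plan is to reduce Corollary~\ref{cor-main-from-circ} to Corollary~\ref{cor-circ} by a diagonal perturbation argument. Fix the sequence $\{s_k\}$ and a countable base of $C^r$ neighborhoods $\mathcal{N}_1 \supset \mathcal{N}_2 \supset \cdots$ of $f$ with $\mathcal{N}_n \subset \{g : d_{C^r}(g,f) < 1/n\}$. For each $n$, I would first produce $g_n \in \mathcal{N}_n$ for which Corollary~\ref{cor-circ} applies.

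By Proposition~\ref{prop-generic}, $\mathcal{N}_n$ contains some $g_n'$ with a cycle $\Gamma_{\mathrm{new}}^*$ of the same type, built from the continuations of $\mathrm{Orb}(O_1^*)$ and $\mathrm{Orb}(O_2^*)$, and satisfying \textup{(P1)}--\textup{(P3)}. If \textup{(P4)} fails, I would modify the global map $T_{2\to1}$, or $T_{1\to2}$, by a further small perturbation supported near a point of a heteroclinic orbit away from the periodic orbits, as in \cite[Section~4]{GSS2002}. The aim is to reverse the sign of $\det\mathbf{J}_{21}$ (or $\det\mathbf{J}_{12}$) without destroying \textup{(P1)}--\textup{(P3)}.

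This is the step I expect to be the real obstacle. The sign of $\det \mathbf{J}_{12}\det\mathbf{J}_{21}$ is a topological orientation invariant of the global maps restricted to the extended unstable manifolds, so a $C^r$-small perturbation cannot change it. One therefore cannot literally perturb it away. The intended route has three parts.
\begin{itemize}
\item First, use Remark~\ref{rem-orientable}~(1). Since $\varsigma_{ij}$ depends on $i,j \bmod 2$ through the signs of $\lambda_\ell^i\gamma_\ell^i\lambda_2^j\gamma_2^j$, whenever some $\lambda_\ell\gamma_\ell<0$ one can pick the parities in $\mathcal{I}$ so that $\varsigma_{ij}=1$. This gives \textup{(P4)} automatically.
\item Second, in the remaining case where all products $\lambda_\ell\gamma_\ell$ are positive, pass to a different cycle near $\Gamma^*$. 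The transverse connection yields a horseshoe, and one can take a new pair of heteroclinic orbits that wind differently, for instance with an extra passage near $O_1$ along the transverse connection. This changes the orientation of the composite global map.
\item Third, check that the new cycle is still centrally dissipative-expanding with a quadratic tangency. This holds because its periodic orbits are the same $O_1, O_2$.
\end{itemize}
I would follow \cite{GSS2002} closely here.

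Once $g_n$ satisfies \textup{(P1)}--\textup{(P4)}, Corollary~\ref{cor-circ} gives $f_{n,k} \to g_n$, neighborhoods $\mathcal{U}_{n,k}$ and periods $\tau_{n,k}\to\infty$. Every $h\in\mathcal{U}_{n,k}$ then has an attracting normally hyperbolic periodic $C^{s_k}$ circle of period $\tau_{n,k}$. The diagonal step goes as follows.
\begin{itemize}
\item Choose $k(n)$ with $d_{C^r}(f_{n,k(n)},g_n)<1/n$, $\tau_{n,k(n)} \ge n$ and $k(n)\ge n$.
\item Because $s_k\to\infty$ (or is constant equal to $r$), take $k(n)$ so large that also $s_{k(n)} \ge s_n$.
\item Since a $C^{s}$ circle is $C^{s'}$ for $s'\le s$, the circle may be regarded as a $C^{s_n}$ circle.
\end{itemize}
Setting $f_n := f_{n,k(n)}$, $\mathcal{U}_n := \mathcal{U}_{n,k(n)}$ and $\tau_n := \tau_{n,k(n)}$ yields the required sequences with $f_n \to f$ in $C^r$ and $\tau_n\to\infty$. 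The monotonicity in $s$ is why one should pick $k(n)$ with $s_{k(n)}\ge s_n$ rather than reindex naively.
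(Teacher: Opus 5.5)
Your proposal follows the paper's route. The paper uses Proposition~\ref{prop-generic} for \textup{(P1)}--\textup{(P3)}, replaces the global maps as in \cite[Section~4]{GSS2002} for \textup{(P4)}, and then applies Corollary~\ref{cor-circ} with a diagonal choice of $k(n)$; your parity shortcut when some $\lambda_\ell\gamma_\ell<0$ and your bookkeeping $s_{k(n)}\ge s_n$ are correct details the paper leaves implicit.

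One clarification for the remaining case. The new orbits exist only after the tangency is unfolded into transverse $\mathrm{Orb}(O_2)\to\mathrm{Orb}(O_1)$ intersections; a single transverse connection gives no horseshoe. What reverses the central orientation is a replacement connection that makes one extra full circuit through both transition regions, so that $\operatorname{sign}(\det\mathbf{J}_{12}\det\mathbf{J}_{21})$ enters squared. An extra passage near $O_1$ alone only contributes $\operatorname{sign}(\lambda_1\gamma_1)=+1$.
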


\subsection{Proof of the main theorem}

\begin{proof}[Proof of Theorem~\ref{thm-main}]
If $r < \infty$, set $s_n := r$ for all $n \in \mathbb{Z}_{>0}$. If $r = \infty$, fix a sequence $\{s_n\}_{n=1}^\infty \subset \mathbb{Z}_{\ge 5}$ as in the statement. Replacing $s_n$ with $\max\{s_1,\dots,s_n\}$, we may assume that $\{s_n\}_{n=1}^\infty$ is nondecreasing.

Applying the Newhouse domain construction to our cycle, as in \cite{GTS1997} and its multidimensional version \cite[Section~1.2]{GST2008}, we obtain an open set $\mathcal{N} \subset \mathrm{Diff}^r(M_\mathrm{ph})$ and a dense subset $\mathcal{D}_\mathrm{cycle} \subset \mathcal{N}$ such that $f \in \overline{\mathcal{N}}$ and every $g \in \mathcal{D}_\mathrm{cycle}$ has a centrally dissipative-expanding transversal and non-transversal heteroclinic cycle of type two bi-saddles with one-dimensional unstable directions.

For each $g \in \mathcal{D}_\mathrm{cycle}$, Corollary~\ref{cor-main-from-circ} gives sequences $\{f_{g,k}\}_{k=1}^\infty$, $\{\mathcal{U}_{g,k}\}_{k=1}^\infty$, and $\{\tau_{g,k}\}_{k=1}^\infty$ such that $f_{g,k} \xrightarrow{k\to\infty} g$, $\tau_{g,k} \xrightarrow{k\to\infty} \infty$, and every $h \in \mathcal{U}_{g,k}$ has an attracting normally hyperbolic periodic $C^{s_k}$ circle of period $\tau_{g,k}$.

For each $n \in \mathbb{Z}_{>0}$, set
\[
\mathcal{O}_n := \bigcup_{g \in \mathcal{D}_\mathrm{cycle}} \, \bigcup_{\substack{k \in \mathbb{Z}_{>0} \\ k \ge n,\ \tau_{g,k} \ge n}} \mathcal{U}_{g,k}.
\]
Then $\mathcal{O}_n$ is open. It is dense in $\mathcal{N}$: if $\mathcal{W} \subset \mathcal{N}$ is nonempty and open, choose $g \in \mathcal{D}_\mathrm{cycle} \cap \mathcal{W}$. Since $f_{g,k} \xrightarrow{k\to\infty} g$ and $\tau_{g,k} \xrightarrow{k\to\infty} \infty$, for all large $k$ we have $k \ge n$, $f_{g,k} \in \mathcal{W}$, and $\tau_{g,k} \ge n$. Hence $\mathcal{W} \cap \mathcal{U}_{g,k} \subset \mathcal{W} \cap \mathcal{O}_n$ is nonempty.
\[
\mathcal{R} := \bigcap_{n \in \mathbb{Z}_{>0}} \mathcal{O}_n
\]
is residual in $\mathcal{N}$. If $h \in \mathcal{R}$, then for every $n \in \mathbb{Z}_{>0}$ there exist $g \in \mathcal{D}_\mathrm{cycle}$ and $k \ge n$ such that $h \in \mathcal{U}_{g,k}$. Thus $h$ has an attracting normally hyperbolic periodic $C^{s_k}$ circle of period at least $n$. Since $\{s_n\}_{n=1}^\infty$ is nondecreasing and $k \ge n$, this circle is also $C^{s_n}$. Taking $\mathcal{U} := \mathcal{N}$ proves Theorem~\ref{thm-main}.
\end{proof}

\section*{Acknowledgments}
I am grateful to Shuhei Hayashi and Sogo Murakami for valuable discussions. I also thank Shin Kiriki, Yushi Nakano, and Teruhiko Soma. Without their involvement, this work would not have begun.
%%%%%%%%%%%%%%%%%%%%%%%%%%% body end %%%%%%%%%%%%%%%%%%%%%%%%%%%%%%%%

\bibliography{references}

@article{G2002,
  author = {Gonchenko, V. S.},
  title = {On Bifurcations of Two-Dimensional Diffeomorphisms with a Homoclinic Tangency of Manifolds of a ``Neutral'' Saddle},
  journal = {Proceedings of the Steklov Institute of Mathematics},
  volume = {236},
  pages = {86--93},
  year = {2002}
}

@techreport{GG2000,
  author = {Gonchenko, S. V. and Gonchenko, V. S.},
  title = {On {Andronov--Hopf} Bifurcations of Two-Dimensional Diffeomorphisms with Homoclinic Tangencies},
  institution = {Weierstrass Institute for Applied Analysis and Stochastics},
  address = {Berlin},
  number = {556},
  year = {2000}
}

@article{GG2004,
  author = {Gonchenko, S. V. and Gonchenko, V. S.},
  title = {On Bifurcations of Birth of Closed Invariant Curves in the Case of Two-Dimensional Diffeomorphisms with Homoclinic Tangencies},
  journal = {Proceedings of the Steklov Institute of Mathematics},
  volume = {244},
  pages = {80--105},
  year = {2004}
}

@article{GKM2005,
  author = {Gonchenko, V. S. and Kuznetsov, Yu. A. and Meijer, H. G. E.},
  title = {Generalized {H\'enon} Map and Bifurcations of Homoclinic Tangencies},
  journal = {SIAM Journal on Applied Dynamical Systems},
  volume = {4},
  number = {2},
  pages = {407--436},
  year = {2005},
  doi = {10.1137/04060487X}
}

@article{GO2005,
  author = {Gonchenko, V. S. and Ovsyannikov, I. I.},
  title = {On Bifurcations of Three-Dimensional Diffeomorphisms with a Homoclinic Tangency to a ``Neutral'' Saddle Fixed Point},
  journal = {Journal of Mathematical Sciences},
  volume = {128},
  number = {2},
  pages = {2774--2777},
  year = {2005},
  doi = {10.1007/s10958-005-0229-5}
}

@inproceedings{GSS2002,
  author = {Gonchenko, S. V. and Shilnikov, L. P. and Sten'kin, O. V.},
  title = {On {Newhouse} Regions with Infinitely Many Stable and Unstable Invariant Tori},
  booktitle = {Progress in Nonlinear Science. Vol. 1: Mathematical Problems of Nonlinear Dynamics},
  note = {Proceedings of the International Conference, Nizhni Novgorod, 2001},
  pages = {80--102},
  address = {Nizhni Novgorod},
  year = {2002}
}

@article{GSS2006,
  author = {Gonchenko, S. V. and Sten'kin, O. V. and Shilnikov, L. P.},
  title = {On the Existence of Infinitely Many Stable and Unstable Invariant Tori for Systems from {Newhouse} Regions with Heteroclinic Tangencies},
  journal = {Russian Journal of Nonlinear Dynamics},
  volume = {2},
  number = {1},
  pages = {3--25},
  year = {2006},
  doi = {10.20537/nd0601001}
}

@article{GTS1997,
  author = {Gonchenko, S. V. and Turaev, D. V. and Shilnikov, L. P.},
  title = {On {Newhouse} Domains of Two-Dimensional Diffeomorphisms That Are Close to a Diffeomorphism with a Structurally Unstable Heteroclinic Contour},
  journal = {Proceedings of the Steklov Institute of Mathematics},
  volume = {216},
  pages = {70--118},
  year = {1997}
}

@article{GST2008,
  author = {Gonchenko, S. V. and Shilnikov, L. P. and Turaev, D. V.},
  title = {On Dynamical Properties of Multidimensional Diffeomorphisms from {Newhouse} Regions: {I}},
  journal = {Nonlinearity},
  volume = {21},
  number = {5},
  pages = {923--972},
  year = {2008},
  doi = {10.1088/0951-7715/21/5/003}
}

@book{HPS1977,
  author = {Hirsch, Morris W. and Pugh, Charles C. and Shub, Michael},
  title = {Invariant Manifolds},
  series = {Lecture Notes in Mathematics},
  volume = {583},
  publisher = {Springer},
  address = {Berlin},
  year = {1977},
  doi = {10.1007/BFb0092042}
}

@incollection{Kelley1967,
  author = {Kelley, Al},
  title = {The stable, center-stable, center, center-unstable, and unstable manifolds},
  booktitle = {Transversal Mappings and Flows},
  editor = {Abraham, Ralph and Robbin, Joel},
  publisher = {W. A. Benjamin},
  address = {New York},
  year = {1967},
  pages = {134--154},
  note = {Appendix C}
}

@article{HitzlZele1985,
  author = {Hitzl, D. L. and Zele, F.},
  title = {An exploration of the {H\'enon} quadratic map},
  journal = {Physica D: Nonlinear Phenomena},
  volume = {14},
  number = {3},
  pages = {305--326},
  year = {1985},
  doi = {10.1016/0167-2789(85)90092-2}
}

@book{Iooss1979,
  author = {Iooss, G{\'e}rard},
  title = {Bifurcation of Maps and Applications},
  series = {North-Holland Mathematics Studies},
  volume = {36},
  publisher = {North-Holland},
  address = {Amsterdam},
  year = {1979}
}

@article{Newhouse1974,
  author = {{Newhouse}, Sheldon E.},
  title = {Diffeomorphisms with Infinitely Many Sinks},
  journal = {Topology},
  volume = {13},
  number = {1},
  pages = {9--18},
  year = {1974},
  doi = {10.1016/0040-9383(74)90035-2}
}

@article{PalisViana1994,
  author = {Palis, Jacob and Viana, Marcelo},
  title = {High Dimension Diffeomorphisms Displaying Infinitely Many Periodic Attractors},
  journal = {Annals of Mathematics},
  volume = {140},
  number = {1},
  pages = {207--250},
  year = {1994},
  doi = {10.2307/2118546}
}

@book{Robinson1999,
  author = {Robinson, R. Clark},
  title = {Dynamical Systems: Stability, Symbolic Dynamics, and Chaos},
  edition = {2},
  publisher = {CRC Press},
  address = {Boca Raton, FL},
  year = {1999},
  isbn = {978-0-8493-8495-0}
}

@article{Romero1995,
  author = {Romero, Nestor},
  title = {Persistence of Homoclinic Tangencies in Higher Dimensions},
  journal = {Ergodic Theory and Dynamical Systems},
  volume = {15},
  number = {4},
  pages = {735--757},
  year = {1995},
  doi = {10.1017/S0143385700008634}
}

@article{Tatjer2001,
  author = {Tatjer, Joan Carles},
  title = {Three-Dimensional Dissipative Diffeomorphisms with Homoclinic Tangencies},
  journal = {Ergodic Theory and Dynamical Systems},
  volume = {21},
  number = {1},
  pages = {249--302},
  year = {2001},
  doi = {10.1017/S0143385701001146}
}

@article{GOT2012,
  author = {Gonchenko, S. V. and Ovsyannikov, I. I. and Turaev, D.},
  title = {On the Effect of Invisibility of Stable Periodic Orbits at Homoclinic Bifurcations},
  journal = {Physica D: Nonlinear Phenomena},
  volume = {241},
  number = {13},
  pages = {1115--1122},
  year = {2012},
  doi = {10.1016/j.physd.2012.04.007}
}

@article{MoraRuiz2011,
  author = {Mora, Leonardo and Ruiz, Bladismir},
  title = {Diffeomorphisms with Infinitely Many Irrational Invariant Curves},
  journal = {Ergodic Theory and Dynamical Systems},
  volume = {31},
  number = {5},
  pages = {1517--1535},
  year = {2011},
  doi = {10.1017/S0143385710000594}
}

@book{SSTC2001,
  author = {Shilnikov, Leonid P. and Shilnikov, Andrey L. and Turaev, Dmitry V. and Chua, Leon O.},
  title = {Methods of Qualitative Theory in Nonlinear Dynamics. Part {I}},
  edition = {2},
  publisher = {World Scientific},
  address = {Singapore},
  year = {2001}
}

@article{RT1971,
  author = {Ruelle, David and Takens, Floris},
  title = {On the Nature of Turbulence},
  journal = {Communications in Mathematical Physics},
  volume = {20},
  pages = {167--192},
  year = {1971},
  doi = {10.1007/BF01646553}
}

@book{MarsdenMcCracken1976,
  author = {Marsden, Jerrold E. and McCracken, Marjorie},
  title = {The {Hopf} Bifurcation and Its Applications},
  series = {Applied Mathematical Sciences},
  volume = {19},
  publisher = {Springer-Verlag},
  address = {New York},
  year = {1976},
  doi = {10.1007/978-1-4612-6374-6}
}

@book{Devaney2003,
  author = {Devaney, Robert L.},
  title = {An Introduction to Chaotic Dynamical Systems},
  edition = {2},
  publisher = {Westview Press},
  address = {Boulder, CO},
  year = {2003},
  isbn = {9780813340852}
}

@misc{Tomizawa2025,
  author = {Tomizawa, Shuntaro},
  title = {Heterodimensional cycles derived from homoclinic tangencies via Hopf bifurcations},
  year = {2025},
  eprint = {2505.12596},
  archivePrefix = {arXiv},
  primaryClass = {math.DS},
  doi = {10.48550/arXiv.2505.12596}
}

@book{K2023,
  author = {Kuznetsov, Yuri A.},
  title = {Elements of Applied Bifurcation Theory},
  edition = {4},
  series = {Applied Mathematical Sciences},
  volume = {112},
  publisher = {Springer},
  address = {Cham},
  year = {2023},
  doi = {10.1007/978-3-031-22007-4}
}
\bibliographystyle{alpha}

\end{document}